\DeclareSymbolFont{mathdesignA}{MDA}{mdput}{m}{n}
\DeclareSymbolFontAlphabet{\mathbb}{mathdesignA}
\DeclareFontFamily{OMX}{MnSymbolE}{}
\DeclareSymbolFont{MnLargeSymbols}{OMX}{MnSymbolE}{m}{n}
\DeclareFontShape{OMX}{MnSymbolE}{m}{n}{
    <-6>  MnSymbolE5
   <6-7>  MnSymbolE6
   <7-8>  MnSymbolE7
   <8-9>  MnSymbolE8
   <9-10> MnSymbolE9
  <10-12> MnSymbolE10
  <12->   MnSymbolE12
}{}
\DeclareFontShape{OMX}{MnSymbolE}{b}{n}{
    <-6>  MnSymbolE-Bold5
   <6-7>  MnSymbolE-Bold6
   <7-8>  MnSymbolE-Bold7
   <8-9>  MnSymbolE-Bold8
   <9-10> MnSymbolE-Bold9
  <10-12> MnSymbolE-Bold10
  <12->   MnSymbolE-Bold12
}{}
\DeclareMathDelimiter{[}{\mathopen}{MnLargeSymbols}{'000}{MnLargeSymbols}{'000}
\DeclareMathDelimiter{]}{\mathclose}{MnLargeSymbols}{'005}{MnLargeSymbols}{'005}
\DeclareMathDelimiter{\llbr}{\mathopen}{MnLargeSymbols}{'102}{MnLargeSymbols}{'102}
\DeclareMathDelimiter{\rrbr}{\mathclose}{MnLargeSymbols}{'107}{MnLargeSymbols}{'107}
\newcommand{\initlengths}{%
    \setlength{\abovedisplayshortskip}{3pt plus 9pt minus 3pt}%
    \setlength{\belowdisplayshortskip}{9pt plus 9pt minus 9pt}%
    \setlength{\abovedisplayskip}{9pt plus 9pt minus 9pt}%
    \setlength{\belowdisplayskip}{9pt plus 9pt minus 9pt}%
    \hfuzz 1pt%
    \tolerance 400
}
\newcommand{\authorinforule}{\noindent\rule{0.38\textwidth}{0.4pt}}
\newlength{\authorwidth}
\newcommand{\authorinfo}[3]{{%
    \raggedright
    \setlength{\leftskip}{1.5em}
    \setlength{\parindent}{0em}
    \setstretch{1}
    \par%
    {\small%
    \makebox[\authorwidth][l]{#1}%
    \texttt{#2}%
    \\
    #3.}
    \vspace{6pt}\par
}}
\newcommand{\parasep}{9pt plus 3pt minus 3pt}
\titleformat{\section}{\Large\libertinusDisplay\setstretch{1.1}}{\thesection}{1em}{}
\titleformat{\subsection}{\large\firalining}{\thesubsection}{1em}{}
\titleformat{\subsubsection}{\firamedium\boldmath}{\thesubsubsection}{1em}{}
\renewenvironment{abstract}{%
    \centering\begin{minipage}{.85\textwidth}%
    \setlength{\parindent}{1.5em}%
    \centerline{\large\firamedium\abstractname}%
    \par\vspace{12pt}%
}{\end{minipage}\par\vspace{3pt}}
\declaretheoremstyle[
    spaceabove=\parasep, spacebelow=\parasep,
    postheadspace=.5em,
    headfont=\normalfont\bfseries,
    headpunct={},
    headformat={\NAME\ \NUMBER\NOTE.},
    notefont=\normalfont,
    notebraces={\ (}{)},
    bodyfont=\itshape,
]{theorem}
\declaretheoremstyle[
    spaceabove=\parasep, spacebelow=\parasep,
    postheadspace=.5em,
    headfont=\normalfont\bfseries,
    headpunct={},
    headformat={\NAME\NOTE.},
    notefont=\normalfont,
    notebraces={\ (}{)},
    bodyfont=\itshape,
]{theorem*}
\declaretheoremstyle[
    spaceabove=\parasep, spacebelow=\parasep,
    postheadspace=.5em,
    headfont=\normalfont\bfseries,
    headpunct={},
    headformat={\NAME\ \NUMBER\NOTE.},
    notefont=\normalfont,
    notebraces={\ (}{)},
]{definition}
\declaretheoremstyle[
    spaceabove=\parasep, spacebelow=\parasep,
    postheadspace=.5em,
    headfont=\normalfont\bfseries,
    headpunct={},
    headformat={\NAME\NOTE.},
    notefont=\normalfont,
    notebraces={\ (}{)},
]{definition*}
\declaretheoremstyle[
    spaceabove=\parasep, spacebelow=\parasep,
    postheadspace=.5em,
    headfont=\normalfont\bfseries,
    headpunct={},
    headformat={\NOTE},
    notefont=\normalfont\bfseries\boldmath,
    notebraces={\hspace{-.33em}}{.},
]{para}
\renewenvironment{proof}[1][\proofname]{\par
    \pushQED{\qed}%
    \normalfont\trivlist
    \item[\hskip\labelsep\bfseries #1\@addpunct{.}]\ignorespaces
}{%
    \popQED\endtrivlist\@endpefalse
}
\def\csname equation*@qed\endcsname{\equation@qed}
\declaretheorem[parent=section, style=theorem, name=Theorem]{theorem}
\declaretheorem[sibling=theorem, style=theorem, name=Proposition]{proposition}
\declaretheorem[numbered=no, style=theorem*, name=Theorem]{theorem*} \declaretheorem[numbered=no, style=theorem*, name=Lemma]{lemma*}
\declaretheorem[numbered=no, style=theorem*, name=Proposition]{proposition*}
\declaretheorem[sibling=theorem, style=definition, name=Definition]{definition}
\declaretheorem[sibling=theorem, style=definition, name=Remark]{remark}
\declaretheorem[sibling=theorem, style=definition, name=Example]{example}
\declaretheorem[sibling=theorem, style=definition, name=Problem]{problem}
\numberwithin{equation}{section}
\crefname{figure}{Figure}{Figures}
\setlist{noitemsep}
\setlist[enumerate]{label=\textnormal{(\roman*)}}
\newcommand{\PP}{\mathbb{P}}
\def\CC{\mathbb{C}}
\def\QQ{\mathbb{Q}}
\def\RR{\mathbb{R}}
\def\ZZ{\mathbb{Z}}
\def\sO{{\mathscr O}}
\renewcommand{\cal}{\mathcal}
\def\cA{{\cal A}}
\def\cD{{\cal D}}
\def\cG{{\cal G}}
\def\cM{{\cal M}}
\def\cP{{\cal P}}
\def\cX{{\cal X}}
\def\cY{{\cal Y}}
\def\cX{\mathcal{X} }
\def\bk{\mathbf{k}}
\def\tcX{\widetilde{\cX}}
\def\hbar{\overline{h}}
\def\PGL{\mathrm{PGL} }
\def\and{\quad{\rm and}\quad}
\def\lra{\longrightarrow }
\def\mapright#1{\,\smash{\mathop{\lra}\limits^{#1}}\,}
\def\beq{\begin{equation}}
\def\eeq{\end{equation}}
\def\ben{\begin{enumerate}}
\def\een{\end{enumerate}}
\def\DM{Deligne--Mumford }
\def\virt{^{\mathrm{vir}}}
\def\and{\quad\text{and}\quad}
\def\Fl{\mathrm{Fl}}
\def\Fl{\mathrm{Fl}}
\def\res{\mathsf{res}}
\def\T{\mathrm{T} }
\def\tmu{\tilde{\mu} }
\def\Fl{\mathrm{Fl} }
\def\PGL{\mathrm{PGL}}
\def\wcM{\widetilde{\cM}}
\newcommand{\git}{{/\mspace{-5mu}/}}
\newcommand{\sumbar}{\mathop{\mathrlap{\mathchoice{\mspace{4mu}\smash{\frac{\mspace{12mu}}{}}}{\mspace{1mu}\smash{\frac{\mspace{11mu}}{}}}{\smash{\frac{\mspace{8mu}}{}}}{\smash{\frac{\mspace{8mu}}{}}}}{\sum}}}
\newcommand{\leftsubstack}[2][6em]{\substack{\makebox[#1][l]{\scriptsize$\begin{aligned}#2\end{aligned}$}}}
\renewcommand{\leq}{\leqslant}
\renewcommand{\emptyset}{\varnothing}
\def\big#1{{\hbox{$\left#1\vbox to10\p@{}\right.\n@space$}}}
\title{Generalized intersection pairings on\\moduli spaces of vector bundles over a curve}
\author{Chenjing Bu\quad and\quad Young-Hoon Kiem}
\date{}
\begin{document}

\initlengths

\maketitle
\vspace{-1em}

\begin{abstract}

We introduce the notion of a generalized intersection pairing for an Artin stack with a proper good moduli space and nonempty stable part. For the moduli stack of semistable bundles over a smooth projective curve, there are four known constructions by partial desingularization, parabolic bundles, stable pairs and wall crossing.
In this paper, we compare all these generalized intersection pairings by establishing wall crossing formulas between them.
Explicit computations for low rank cases are included.

\end{abstract}

\vspace{2em}
{
    \hypersetup{linkcolor=black}
    \tableofcontents
}

\vspace{1em}
\section{Introduction}

Let $X$ be a smooth projective variety with an ample line bundle $\sO_X(1)$ and let $\cM_\tau^{\mathrm{rig}}$ be the (rigidified) moduli stack of all coherent sheaves of fixed topological type $\tau$, which is usually an Artin stack of infinite type. 
As we don't know how to integrate cohomology classes for an arbitrary Artin stack, for sheaf counting, we often confine ourselves to the open substack $\cM^{\mathrm{ss}}_\tau$ (resp.~$\cM^{\mathrm{s}}_\tau$) consisting of Gieseker semistable (resp.~stable) sheaves. See \cite{Huybrechts-Lehn} for basic information on the Gieseker stability and moduli spaces. 

Suppose $\cM_\tau^{\mathrm{ss}}=\cM^{\mathrm{s}}_\tau$. Then 
the moduli stack $\cM^{\mathrm{ss}}_\tau$ of semistable sheaves is a projective scheme by \cite{Simpson}. 
If furthermore $X$ is a curve, $\cM^{\mathrm{ss}}_\tau$ is smooth and its cohomology has been much studied since 1970s. We know its Betti numbers by \cite{Harder-Narasimhan, atiyah-bott-1983} and the intersection pairing by \cite{jeffrey-kirwan-1998-curves}, just to indicate a few monumental results. 
When $X$ is a surface (cf.~\cite{mochizuki-2009-donaldson}) or a Calabi--Yau 3-fold (cf.~\cite{Thomas}) and $\cM_\tau^{\mathrm{ss}}=\cM^{\mathrm{s}}_\tau$, the moduli stack admits a virtual fundamental class $[\cM^{\mathrm{ss}}_\tau]\virt$ against which cohomology classes $\xi\in \mathrm{H}^\bullet (\cM^{\mathrm{ss}}_\tau)$ may be integrated to give us virtual \emph{intersection pairings}. 
Modern sheaf counting invariants in algebraic geometry like the Donaldson and Donaldson--Thomas invariants are all defined in this manner and are quite useful not just in algebraic geometry but also in quantum field theory and representation theory as well as in combinatorics. 

When $\cM^{\mathrm{ss}}_\tau\ne \cM_\tau^{\mathrm{s}}$, the moduli stack $\cM_\tau^{\mathrm{ss}}$ is an Artin stack with a proper good moduli space 
\begin{equation*}
    \begin{tikzcd}[column sep={8em, between origins}, row sep={3.5em, between origins}]
        \cM_\tau^{\mathrm{ss}}
        \ar[r, "\text{good moduli}"]
        & M_{\tau} 
         \\
       \cM^{\mathrm{s}}_\tau 
        \ar[u, hookrightarrow]
        \ar[r, "\cong"]
        & M_\tau^{\mathrm{s}}
        \ar[u, hookrightarrow]
    \end{tikzcd}
\end{equation*}
where $M_\tau$ is the geometric invariant theory quotient $Q \git G$ of a suitable Quot scheme by a reductive group action (cf.~\cite{simpson-1994}) while $\cM_\tau^{\mathrm{ss}}$ is the quotient stack $[Q^{\mathrm{ss}}/G]$ of the semistable part in $Q$. 
In contrast to the spectacular developments in the case of $\cM^{\mathrm{ss}}_\tau=\cM^{\mathrm{s}}_\tau$, not much has been known for sheaf counting when there are strictly semistable sheaves. 
On the other hand, even if one is only interested in the special case where no strictly semistable sheaves are allowed, it is inevitable to consider the general case because they appear naturally in wall crossing formulas as we vary the stability conditions. 

The purpose of this paper is to investigate \emph{generalized intersection pairings} on the moduli stack $\cM^{\mathrm{ss}}_\tau$ for any $\tau$ 
when $X$ is a curve.  
Here a generalized intersection pairing refers to a linear map
$$\mathrm{H}^\bullet (\cM^{\mathrm{ss}}_\tau)\lra \QQ$$
which restricts to the ordinary integral
\beq\label{b0}\int_{[\cM^{\mathrm{s}}_\tau]}:\mathrm{H}^\bullet_{\mathrm{c}}(\cM^{\mathrm{s}}_\tau)\lra \QQ \eeq
against the fundamental class of the smooth quasi-projective scheme $\cM^{\mathrm{s}}_\tau$. When $X$ is a surface or a Calabi--Yau/Fano 3-fold, 
$\cM_\tau^{\mathrm{s}}$ is quasi-smooth and we can consider the generalized virtual intersection pairing by using the virtual fundamental class $[\cM^{\mathrm{s}}_\tau]\virt$.

How do we get a generalized intersection pairing of an Artin stack $\cM^{\mathrm{ss}}_\tau$? 
To make sense of an integral, we have to resolve the stacky points with infinite stabilizers. In other words, 
we have to find a dominant morphism
$$f:Y\lra \cM^{\mathrm{ss}}_\tau$$
from a (quasi-smooth) proper \DM stack $Y$. 
If $f$ is an isomorphism over $\cM^{\mathrm{s}}_\tau$, we have a generalized intersection pairing
\beq\label{b1} \mathrm{H}^\bullet(\cM^{\mathrm{ss}}_\tau)\lra \QQ \ , \qquad \xi \longmapsto \int_{Y} \xi|_Y\eeq 
where $\xi|_Y=f^*\xi$ denotes the pullback by $f$. 
If $f$ is smooth over $\cM^{\mathrm{s}}_\tau$ with proper connected fibres, we also have a generalized intersection pairing 
\beq\label{b2}
\mathrm{H}^\bullet(\cM^{\mathrm{ss}}_\tau)\lra \QQ \ , \qquad \xi \longmapsto \int_{Y} \xi|_Y\cup \frac{e(\T_f)}{\chi_f}
\eeq
where $\T_f$ denotes the relative tangent bundle and $\chi_f$ denotes the topological Euler characteristic of fibres over $\cM^{\mathrm{s}}_\tau$. It is straightforward to see that both \cref{b1} and \cref{b2} restrict to \cref{b0}. 

Historically, the first generalized intersection pairing was constructed by Kirwan in \cite{kirwan-1985-partial} where she constructed a resolution
$$\rho:\widetilde{\cM}^{\mathrm{ss}}_\tau=[\widetilde{Q}^{\mathrm{s}}/G]\lra [Q^{\mathrm{ss}}/G]=\cM^{\mathrm{ss}}_\tau$$
of points with infinite stabilizers by a canonical sequence of blowups, so that $\widetilde{\cM}^{\mathrm{ss}}_\tau$ is proper Deligne--Mumford
when $X$ is a curve.
Kirwan's partial desingularization $\widetilde{\cM}^{\mathrm{ss}}_\tau$
gives us a generalized intersection pairing by
\beq\label{i1} 
\mathrm{H}^\bullet (\cM^{\mathrm{ss}}_\tau)\lra \QQ \ , \qquad \xi \longmapsto \int_{[\widetilde{\cM}^{\mathrm{ss}}_\tau]}\rho^*\xi.\eeq
This construction was generalized to higher dimensional sheaf counting by intrinsic blowups and semi-perfect obstruction theory in \cite{kiem-li-savvas-2024}.

The second generalized intersection pairing was introduced in \cite{jeffrey-kiem-kirwan-woolf-2003-cohomology,jeffrey-kiem-kirwan-woolf-2006-intersection} through an effort to compute \cref{i1}. 
We fix a point $p$ in a smooth projective curve $X$ and then consider parabolic bundles $(E,F)$ where $F$ is a full flag in the fibre $E|_p$ of $E$ over $p$. 
When the parabolic weight $c$ is close to zero and general, the moduli stack $\cP^{\mathrm{ss}}_\tau(c)$ of stable parabolic bundles is a smooth projective variety and the forgetful morphism 
$$\omega:\cP^{\mathrm{ss}}_\tau(c) \lra \cM^{\mathrm{ss}}_\tau \ , \qquad (E,F) \longmapsto E$$
is smooth. Furthermore, the Euler class of the relative tangent bundle $\T_\omega$ is the product $\cD$ of positive roots of $\mathrm{SL}_r$ and the topological Euler characteristic of the flag variety $\mathrm{Fl}(r)$ is $r!$ where $r$ denotes the rank of a vector bundle in $\cM^{\mathrm{ss}}_\tau$. 
We thus obtain a generalized intersection pairing
\beq\label{i2} 
\mathrm{JK}_\tau(c) :\mathrm{H}^\bullet (\cM^{\mathrm{ss}}_\tau)\lra \QQ \ , \qquad \xi \longmapsto \int_{[\cP^{\mathrm{ss}}_\tau(c)]}\omega^*\xi\cup \frac{\cD}{r!}\eeq
when $X$ is a curve. 
For $\dim X\ge 2$, we may use the Seidel--Thomas twists \cite{seidel-thomas-2001} and assume that $\cM^{\mathrm{ss}}_\tau$ consists of locally free sheaves. 
Then we may proceed as in the curve case to define a generalized virtual intersection pairing by \cref{i2}. 

The third generalized intersection pairing was introduced by T. Mochizuki in \cite{mochizuki-2009-donaldson} where he used the moduli stack $\cM'^{\mathrm{ss}}_\tau$ of $\epsilon$-stable pairs, with $\epsilon$ sufficiently close to 0, in order to resolve the stacky points in $\cM^{\mathrm{ss}}_\tau$. Here a pair refers to $(E,s)$ with $E\in \cM_\tau$ and $s\in \mathrm{H}^0(E)$. After twisting by a sufficiently ample line bundle if necessary, the forgetful morphism
$$\pi:\cM'^{\mathrm{ss}}_\tau\lra \cM^{\mathrm{ss}}_\tau$$
is smooth and the fibre over $E\in \cM^{\mathrm{s}}_\tau$ is $\PP^{\chi-1}$ where $\chi=\dim \mathrm{H}^0(E)$. Then we have a generalized intersection pairing
\beq\label{i3}
\Pi J'_\tau:\mathrm{H}^\bullet (\cM^{\mathrm{ss}}_\tau)\lra \QQ \ , \qquad \xi \longmapsto \int_{[\cM'^{\mathrm{ss}}_\tau]}\pi^*\xi\cup \frac{e(\T_\pi)}{\chi}\eeq
when $X$ is a curve. For higher dimensional sheaf counting when $\cM_\tau^{\mathrm{ss}}$ is quasi-smooth, we may use the virtual cycle $[\cM'^{\mathrm{ss}}_\tau]\virt$ in \cref{i3}. 

The last generalized intersection pairing that we deal with in this paper was constructed by Joyce in \cite{joyce-wall-crossing} where he showed that by a wall crossing \cite[Theorem 5.7]{joyce-wall-crossing} from the pairing \cref{i3}, we get a generalized intersection pairing 
\beq\label{i4}
J_\tau:
\mathrm{H}^\bullet(\cM^{\mathrm{ss}}_\tau)\lra \QQ\eeq
for $\dim X\le 2$,
which does not depend on the choice of the twisting in the construction of~$\cM'^{\mathrm{ss}}_\tau$. 

Now that we have at least four generalized intersection pairings,  
it seems natural to ask the following. 

\medskip

\noindent \textbf{Question}.
Can we compute and compare all these generalized intersection pairings \cref{i1}, \cref{i2}, \cref{i3} and \cref{i4}?  

\medskip

To be concrete, in the rest of this paper, we let $X$ be a curve and $\tau=(r,d)$ where $r$ and $d$ denote the rank and degree of a vector bundle. 
For curves, the generalized intersection pairing \cref{i2} was computed in \cite{jeffrey-kirwan-1998-curves} and \cref{i4} in \cite{bu-2023-curves}. 
The difference of \cref{i1} and \cref{i2} was computed in \cite{jeffrey-kiem-kirwan-woolf-2006-intersection} from which \cref{i1} follows by adding \cref{i2} (cf.~\S\ref{sec-par}).
The difference of \cref{i3} and \cref{i4} was expressed as a wall crossing formula in \cite[Theorem 5.7]{joyce-wall-crossing} from which \cref{i3} follows by adding \cref{i4}. The current state of knowledge may be summarized as follows:
\begin{equation}\label{i5}
    \begin{tikzcd}[column sep={6em, between origins}, row sep={2em, between origins}]
        \text{\cref{i1}}
        \ar[r, leftrightarrow, "\text{\cite{jeffrey-kiem-kirwan-woolf-2006-intersection}}"] 
        &
        \text{\cref{i2}}
        \ar[r, dotted, leftrightarrow, "\text{??}"]
        &
        \text{\cref{i3}}
        \ar[r, leftrightarrow, "\text{\cite{joyce-wall-crossing}}"] 
        &
        \text{\cref{i4}}
        \\
        &
        \text{\footnotesize\cite{jeffrey-kirwan-1998-curves}}
        &
        &
        \text{\footnotesize\cite{bu-2023-curves}}
    \end{tikzcd}
\end{equation}

\medskip

In this paper, we compute 
the difference of  \cref{i2} and \cref{i3} or \cref{i4}
by applying the machinery of \cite{joyce-wall-crossing}
where we think of the $\QQ$-linear maps $\mathrm{JK}_{r,d}(c)$ in \cref{i2}, $\Pi J'_{r,d}$ in \cref{i3} and $J_{r,d}$ in \cref{i4} as homology classes via the tautological duality between the homology and cohomology with $\QQ$-coefficients. 
By twisting $E$ by a sufficiently ample line bundle, we may assume that the slope $d/r$ of $E$ is large enough. 

\begin{theorem*}
    Let~$X$ be a smooth projective curve over $\mathbb{C}$
    of genus $g > 1$,
    and let $\mathcal{M}^{\smash{\mathrm{rig}}}_{k}$
    be the moduli stack of semistable vector bundles on~$X$
    of a fixed slope~$k \in \mathbb{Q}$.
    Consider Joyce's \textnormal{\cite{joyce-wall-crossing}}
    Lie bracket on the homology
    $\mathrm{H}_{\bullet + 2 \dim} (\mathcal{M}^{\smash{\mathrm{rig}}}_{k})$.
    Fix integers $r, d$ with $r > 0$ and $d/r = k$,
    and let $r_0 = r / \mathrm{gcd} (r, d)$.

\begin{enumerate}
\item \textnormal{(Theorem \ref{thm-wcf-general-weights})}
    For a generic parabolic weight $c$ close to $0$
    \textnormal{(cf.~Definition \ref{def-par-weights})},
    we have
    \begin{equation*}
        \mathrm{JK}_{r, d} (c) - J_{r, d} =
        \sum_{\substack{
            1^r = f_1 + \cdots + f_n: \\
            n > 1, \ r_0 \, | \, |f_i|
        }} {}
        \frac{r_1! \cdots r_n!}{r!} \cdot
        \widetilde{U} (f_1, \dotsc, f_n; \mu_0, \mu_c) \cdot
        [ [ \dotsc [ J_{r_1, d_1},
        J_{r_2, d_2} ], \dotsc ] ,
        J_{r_n, d_n} ] \ ,
    \end{equation*}
    where we sum over decompositions of the flag type
    $1^r = (1, \dotsc, 1)$
    into non-zero vectors
    $f_i = (f_{i, 1}, \dotsc, f_{i, r})$
    with $f_{i, j} \in \{0, 1\}$,
    we write $r_i = |f_i| = \sum_j f_{i, j}$
    and $d_i = k r_i$,
    and $\widetilde{U}$ are the universal combinatorial coefficients in \cref{def-wcf-coeff}. 
\item \textnormal{(Theorem \ref{thm-wcf-special-weights})} 
For special choices of $c$, the above formula can be significantly simplified as follows: Let 
\[
    c^+ = \bigl( \ c_i = 1 - \varepsilon^{i-1} \ \bigr) \ ,
    \qquad
    c^- = \bigl( \ c_i = \varepsilon^{r - i} \ \bigr) \ ,
    \qquad
    i = 1, \dotsc, r \ ,
\]
with $\varepsilon > 0$ sufficiently small. Then we have
    \begin{alignat*}{2}
        \mathrm{JK}_{r, d} (c^+) - J_{r, d}
        & =
        \sum_{\substack{
            r = r_1 + \cdots + r_n: \\
            n > 1, \ r_0 \, | \, r_i
        }} {}
        & \frac{(-1)^{n-1}}{n!} \cdot \frac{r_1}{r} \cdot
        [[ \dotsc [J_{r_1, d_1}, J_{r_2, d_2}], \dotsc, J_{r_n, d_n}]]
        & \ ,
        \\
        \mathrm{JK}_{r, d} (c^-) - J_{r, d}
        & =
        \sum_{\substack{
            r = r_1 + \cdots + r_n: \\
            n > 1, \ r_0 \, | \, r_i
        }} {}
        & \frac{r_1}{r} \cdot
        [[ \dotsc [J_{r_1, d_1}, J_{r_2, d_2}], \dotsc, J_{r_n, d_n}]]
        & \ ,
    \end{alignat*}
    where $r_i > 0$,
    and we write $d_i = k r_i$.
\item \textnormal{(Theorem \ref{thm7.5})} 
    Suppose that $k > 2g-2$.
    Then for a generic $c$ close to $0$, we have  
 \begin{align*}
        \mathrm{JK}_{r, d} (c) - \Pi J'_{r, d} =
        \hspace{-2.5em}
        \sum_{\substack{
            1^r = f_1 + \cdots + f_n: \\
            n > 1, \ r_0 \, | \, |f_i|, \\
            \mu_c (f_1) < \mu_c (f) < \mu_c (f_2) < \cdots < \mu_c (f_n)
        }} {}
        \hspace{-2.5em}
        \frac{r_1! \cdots r_n!}{r!} \cdot
        \frac{r_1}{r} \cdot
        [ [ \dotsc [ \Pi J'_{r_1, d_1},
        \mathrm{JK}_{r_2, d_2} (c) ], \dotsc ] ,
        \mathrm{JK}_{r_n, d_n} (c) ] \ ,
    \end{align*}
    where we write $r_i = |f_i| > 0$
    and $d_i = k r_i$,
    and $\mu_c$ is defined in \cref{def-par-weights}.
\end{enumerate}
\end{theorem*}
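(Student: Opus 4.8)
The plan is to derive this by combining the wall-crossing formula of part~(i) --- Theorem~\ref{thm-wcf-general-weights}, which compares $\mathrm{JK}_{r,d}(c)$ with Joyce's invariant $J_{r,d}$ --- with Joyce's pair wall-crossing \cite[Theorem~5.7]{joyce-wall-crossing}, which compares $\Pi J'_{r,d}$ with $J_{r,d}$, and then simplifying the resulting coefficients using the hypothesis $k > 2g-2$.

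First I would isolate the role of $k > 2g-2$. For such $k$, every semistable bundle $E$ of slope $k$ on $X$ has $H^1(E)=0$, so $\chi := \dim H^0(E) = r(k+1-g)$, and every Jordan--H\"older subobject $E'\subseteq E$ (again semistable of slope $k$) satisfies $\dim H^0(E') = r'(k+1-g) = (r'/r)\,\chi$. This is exactly the input unavailable in parts~(i)--(ii), and it is what tames the pair side: \cite[Theorem~5.7]{joyce-wall-crossing} expresses $\Pi J'_{r,d}$ as a universal Lie polynomial in the $J_{r_i,d_i}$ with $r_i\mid r$, $r_0\mid r_i$, $d_i=kr_i$, whose coefficients are assembled from the ratios $\dim H^0(E_i)/\chi$ together with $\epsilon$-slope orderings; under $k>2g-2$ those ratios become $r_i/r$, and the $\PP^{\chi-1}$-fibre of $\pi$ contributes the distinguished outermost factor $r_1/r$ appearing in the statement.

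The core of the argument is then combinatorial. Both wall-crossing formulas are unitriangular with leading term $J_{r,d}$, so the part~(i) formula can be inverted to write each $J_{r',d'}$ as a Lie polynomial in the $\mathrm{JK}_{r'',d''}(c)$; substituting this into the expansion of $\Pi J'_{r,d}$ and rearranging turns the asserted identity into an identity among the universal coefficients $\widetilde U$ of \cref{def-wcf-coeff}, Joyce's pair coefficients, and the arithmetic factors $r_i/r$. What must be verified is: (a) every contribution carrying two or more $\Pi J'$-factors cancels, leaving a single $\Pi J'_{r_1,d_1}$ --- the piece that absorbs the section; (b) the surviving sum is indexed by flag decompositions $1^r = f_1+\cdots+f_n$ cut out by the single Harder--Narasimhan-type chain $\mu_c(f_1)<\mu_c(f)<\mu_c(f_2)<\cdots<\mu_c(f_n)$ for the parabolic slope $\mu_c$ of \cref{def-par-weights}; and (c) the remaining weight collapses to $\frac{r_1!\cdots r_n!}{r!}\cdot\frac{r_1}{r}$.

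An essentially equivalent but more structural route would bypass the double substitution: apply Joyce's machinery once to the auxiliary category of triples $(E,F,s)$, with a full flag $F$ at the marked point together with a section $s\in H^0(E)$, carrying the two-parameter family of stability conditions interpolating between the ``small weight~$c$, light section'' chamber, whose associated pairing is $\mathrm{JK}_{r,d}(c)$ (obtained by forgetting $s$ with the normalization of \cref{i2}), and the ``light flag, small~$\epsilon$'' chamber, whose associated pairing is $\Pi J'_{r,d}$ (obtained by forgetting $F$ with the normalization of \cref{i3}); the wall-crossing theorem of \cite{joyce-wall-crossing} then produces $\Pi J'$ and $\mathrm{JK}(c)$ directly on the right-hand side, and (a)--(c) reduce to identifying the relevant walls and their residues. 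Either way, the main obstacle I expect is precisely the package (a)--(c): controlling the nested-bracket combinatorics so that the higher $\Pi J'$-degree terms cancel and the residual weights telescope into the stated closed form --- or, along the structural route, checking Joyce's axioms for the triples category and verifying that the two chamber limits, with their Euler-class and projective-bundle normalizations, genuinely recover the pairings \cref{i2} and \cref{i3}.
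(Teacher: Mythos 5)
Your second, ``structural'' route is in fact the route the paper takes for part~(iii): one runs Joyce's wall-crossing machine once in the category of parabolic triples $(E,F,s)$, for the two-parameter family of stability conditions $\mu^{\varepsilon}_{\varepsilon' c}$ interpolating between the two chambers described in \cref{para-overview}, and then pushes the resulting identity forward along the maps $\Pi$ and $\Omega$, which are Lie algebra homomorphisms only up to normalization; the factors $\frac{r_1!\cdots r_n!}{r!}$ and $\frac{r_1}{r}$ in (iii) are precisely those normalization defects, and your observation that $k>2g-2$ (vanishing of $\mathrm{H}^1$) is what makes the pair-side ratios equal to $r_i/r$ is consistent with the paper's choice of $r(-)$ as the rank. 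Your first route, however, is doubtful as described: once part~(i) is collapsed to a sum over rank decompositions, the flag data $f_i$ is gone, whereas the right-hand side of (iii) is genuinely indexed by flag decompositions through the chain condition $\mu_c(f_1)<\mu_c(f)<\mu_c(f_2)<\cdots<\mu_c(f_n)$ for a generic~$c$; an inversion and resubstitution purely at the level of the classes $J_{r_i,d_i}$ and $\mathrm{JK}_{r_i,d_i}(c)$ cannot reproduce that indexing, so the wall-crossing must be performed where the flags live, i.e.\ on triples, as in the paper.

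The genuine gaps are these. The statement has three parts, and your proposal addresses only (iii), taking (i) as an input and saying nothing about (ii); but (i) is not available for free. In the paper, (i) follows from Joyce's wall-crossing for parabolic bundles between $\mu_0$ and $\mu_c$ only after one knows $\Omega(P_{r,d}(0))=J_{r,d}$ (\cref{thm-omega-par-zero}), and that identity is itself a theorem, proved by induction on rank by playing \cref{eq-pi-pair}, \cref{eq-pi-triple-zero} and \cref{eq-omega-triple-zero} against each other; nothing in your plan produces this identification of the weight-zero parabolic class with the Joyce class, and without it neither (i) nor (ii) follows. Part~(ii) additionally requires the explicit determination of $\widetilde{U}(f_1,\dotsc,f_n;\mu_0,\mu_{c^\pm})$ as $(-1)^{n-1}/n!$ resp.\ $1/n!$ times the indicator of $(f_1)_1=1$ resp.\ $(f_1)_r=1$, together with the count of flag decompositions with prescribed ranks. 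Finally, even for (iii) along your structural route, the decisive steps are flagged as ``obstacles'' rather than carried out: checking Joyce's Assumptions 5.1--5.3 for triples (\cref{subsec-joyce-axioms}), identifying the two chamber limits with the two pairings via $\Pi(P'_{r,d,1}(c))=P_{r,d}(c)$ (\cref{eq-pi-triple-generic}), $\Omega(P'_{r,d,1}(0))=J'_{r,d,1}$ (\cref{eq-omega-triple-zero}) and the equality $P'_{r,d,1}(0)=P'_{r,d,1}(\mu'_c)$, and above all computing that a valid choice of $\widetilde{U}(\beta_1,\dotsc,\beta_n;\mu'_c,\mu_c)$ is the $0/1$ indicator of $v_1=1$ together with the slope chain --- your items (a)--(c) are exactly this computation, asserted but not performed. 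So the proposal identifies the correct framework but, as it stands, proves none of the three assertions.
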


These formulas are deduced from applying
Joyce's formalism in \cite{joyce-wall-crossing}
to the moduli stack of triples $(E,F,s)$ over~$X$,
where $E$ is a semistable bundle of slope $k$,
$F$ is a flag in $E|_p$ and $s\in \mathrm{H}^0(E)$.
See \textnormal{\S\ref{subsec-joyce-axioms}} for the precise set-up.

\medskip

As summarized in \cref{i5}, together with the results in \cite{jeffrey-kirwan-1998-curves, bu-2023-curves, jeffrey-kiem-kirwan-woolf-2006-intersection, joyce-wall-crossing}, the above theorem enables us to compare all the generalized intersection pairings. 

For low rank cases, we can compute the generalized intersection pairings explicitly. 
When the rank is 2 and the degree is even, all the generalized intersection pairings in \cref{i5} are equal except for \cref{i1}. (See \S\ref{S8.1.1} and \S\ref{S8.2}.) When the rank is 3 or higher, we find that all the generalized intersection pairings are different in general. (See \S\ref{S8.1.2} and \S\ref{S8.1.3}.) 

\medskip

The layout of this paper is as follows. In \S\ref{S:GIP}, we introduce the notion of generalized intersection pairings and collect methods of construction. In \S\ref{S:npts}, we work out an example of GIT moduli space of $n$ points on the projective line $\PP^1$ where all the generalized intersection pairings can be explicitly computed and compared. In \S\ref{sec-moduli}, we collect all the moduli stacks used for generalized intersection pairings in vector bundle counting over curves. In \S\ref{sec-par}, we recall the comparison of \cref{i1} and \cref{i2} from \cite{jeffrey-kiem-kirwan-woolf-2006-intersection} for completeness. In \S\ref{sec-joyce}, we recall Joyce's wall crossing framework and his enumerative invariant from \cite{joyce-wall-crossing}. In \S\ref{sec-comparison}, we compare the generalized intersection pairings by parabolic bundles and by the Joyce class. In \S\ref{sec-computation}, we work out explicit computations for low rank cases.

\medskip

All the schemes and stacks in this paper are defined over $\CC$ and all the (co)homology groups have $\QQ$ coefficients. 

\medskip

\noindent
\textbf{Acknowledgements.}
We would like to thank Frances Kirwan for helpful discussions.
Chenjing Bu was supported by EPSRC grant reference EP/X040674/1.

\section{Generalized intersection pairings on Artin stacks}

\label{S:GIP}

For an Artin stack $\cX$ over $\mathbb{C}$ and a cohomology class $\xi\in \mathrm{H}^{\bullet}(\cX)$, it is not known how to make sense of the intersection pairing
$$\int_\cX \xi$$
in general, even when $\cX$ is smooth and admits a proper moduli space. 
In this section, we collect methods to define generalized intersection pairings on Artin stacks (Definition \ref{51}) by resolving the stacky points with infinite stabilizers or by wall crossings.

Let $\cX$ be an Artin stack over $\CC$ with a proper good moduli space $\cX\to X$. Suppose that 
the \DM substack $\cX^{\mathrm{s}}$ of stable points in $\cX$ is nonempty. Here a point $x\in\cX$ is called stable if $\pi^{-1}\pi(x)=\{x\}$ and the automorphism group of $x$ is finite. 
A typical example is the GIT quotient
\beq\label{1} \cX= [Q^{\mathrm{ss}}/G]\lra Q\git G=X\eeq
of a projective scheme $Q$ by a linearized action of a reductive group $G$ with nonempty stable part $Q^{\mathrm{s}}\ne \emptyset$. 

Let $\xi\in \mathrm{H}^{\bullet}(\cX;\QQ)$ be a cohomology class.
When $\cX$ is smooth Deligne--Mumford, we have the intersection pairing 
$$\int_{\cX}\xi \in \QQ$$
by using the fundamental class $[\cX]$ of $\cX$ (cf. \cite{Ful,Vis}). 
When $\cX$ is a proper \DM stack equipped with a virtual fundamental class $[\cX]\virt$ (cf. \cite{Li-Tian, Behrend-Fantechi}), we may consider the virtual intersection pairing 
$$\int_{[\cX]\virt}\xi$$
and many modern enumerative invariants including the Gromov--Witten and Donaldson--Thomas invariants are defined in this way.
 
For convenience, we introduce the following.
\begin{definition}\label{51}
    A \emph{generalized intersection pairing} on a smooth Artin stack $\cX$ over $\mathbb{C}$ of dimension~$d$ with a proper good moduli space is a linear map
    $$\int_{\cX}: \mathrm{H}^{2d}(\cX)\lra \QQ$$
    which restricts to the ordinary integral 
    $$\int_{\cX^{\mathrm{s}}} :\mathrm{H}^{2d}_{\mathrm{c}}(\cX^{\mathrm{s}})\lra \QQ$$    
of compactly supported cohomology classes on the stable part $\cX^{\mathrm{s}}$ by using the natural map $\mathrm{H}^{\bullet}_{\mathrm{c}}(\cX^{\mathrm{s}})\to \mathrm{H}^{\bullet}(\cX)$. 
\end{definition}

\begin{remark}
    (1) By the tautological duality $\mathrm{H}_\bullet(\cX)\cong \mathrm{Hom}(\mathrm{H}^\bullet(\cX),\QQ)$, we may consider a generalized intersection pairing as a homology class in $\mathrm{H}_{2d}(\cX)$ which restricts to the fundamental class of $\cX^{\mathrm{s}}$ in $\mathrm{H}^\mathrm{BM}_{\smash{2d}} (\mathcal{X}^{\mathrm{s}})$.

(2) When $\cX$ is quasi-smooth, if we replace $\cX^{\mathrm{s}}$ by its virtual fundamental class $[\cX^{\mathrm{s}}]\virt$ in Definition \ref{51}, we obtain the notion of a generalized virtual intersection pairing.   
\end{remark}

Down below, we collect known methods to define 
generalized intersection pairings for Artin stacks.

\subsection{Intersection pairings on partial desingularizations}\label{S1}

In \cite{Kir85}, a canonical partial desingularization 
\beq\label{2} \rho:\tcX=[\widetilde{Q}^{\mathrm{s}}/G]\lra [Q^{\mathrm{ss}}/G]= \cX \eeq
was constructed for the GIT quotient \cref{1} of a smooth projective variety $Q$ such that $\tcX$ is a smooth proper \DM stack.
Indeed, we blow up $Q^{\mathrm{ss}}$ along the locus of semistable points whose stabilizer group is of maximal dimension and then delete the unstable points with respect to the linearization which is very close to the pullback of the linearization of $Q$. By repeating this process finitely many times, we reach $\widetilde{Q}^{\mathrm{s}}=\widetilde{Q}^{\mathrm{ss}}$ in \cref{2}.

Using the partial desingularization \cref{2}, we have the first generalized intersection pairing 
\beq\label{3} \int_{\tcX} \xi|_{\tcX}\quad \text{for }\xi \in \mathrm{H}^{\bullet}(\cX)\eeq
where $\xi|_{\tcX}=\rho^*\xi$ denotes the pullback of $\xi$ by $\rho$. 
Of course, when there are no strictly semistable points so that $\cX=\cX^{\mathrm{s}}=[Q^{\mathrm{s}}/G]$ is Deligne--Mumford, $\tcX=\cX$ and \cref{3} coincides with the usual intersection pairing $\int_{\cX}\xi$ on the \DM stack $\cX$ in \cite{Ful, Vis}.

In \cite{JKKW1}, a systematic way to compute the generalized pairing \cref{3} was provided for  GIT quotients of smooth varieties. 
Under favorable circumstances, \cref{3} also gives us the intersection cohomology pairing on the good moduli space $X$ of $\cX$ by \cite{Kie1}. In \S\ref{S:npts}, we work out these intersection pairings explicitly for $(\PP^1)^{2r} \git \PGL_2$. 

\begin{remark}
The partial desingularization process \cref{2} was generalized to any Artin stack $\cX$ with a stable good moduli space by \cite{ER}. In \cite{KL, KLS}, a different generalization of \cref{2} was constructed by \emph{intrinsic blowups} which are better suited for lifting perfect obstruction theories when $\cX$ is quasi-smooth. 
In \cite{HRS}, this intrinsic partial desingularization was enhanced to the derived algebraic geometry setting.
When $\tcX$ admits a virtual fundamental class $[\tcX]\virt$ as in \cite{kiem-li-savvas-2024}, we may consider the generalized virtual intersection pairing
\beq \label{3v} \int_{[\tcX]\virt} \xi|_{\tcX}.\eeq
\end{remark}

The generalized intersection pairing in \cref{3} turned out to be quite useful. 
Using \cref{2} and \cref{3} from \cite{Kir85}, the intersection cohomology pairings on singular moduli spaces of semistable vector bundles over a smooth projective curve were computed in \cite{JKKW2} by using \cite{JKKW1, kiem-2004}. In \cite{KL}, the intrinsic partial desingularization and \cref{3v} were applied to prove a wall crossing formula for Donaldson--Thomas invariants. 
In \cite{KLS, KSk, HRS}, generalized Donaldson--Thomas invariants and related invariants were constructed by \cref{3v}.

\subsection{Intersection pairings on smooth fibrations}\label{S2}

Let $\cX$ be a connected Artin stack with a good moduli space $\cX\to X$ such that $\cX^{\mathrm{s}}\ne \emptyset$. Another way to define generalized intersection pairings of cohomology classes $\xi\in \mathrm{H}^{\bullet}(\cX)$ is to use a smooth morphism over $\cX$. 
Let \beq\label{6} \pi:\cY\lra \cX\eeq
be a \emph{smooth} morphism of Artin stacks such that $\cY$ is proper \DM and $\pi$ is proper over the stable part $\cX^{\mathrm{s}}$ with connected fibres. 

Let $w(\pi)$ denote the topological Euler characteristic of fibres over $\cX^{\mathrm{s}}$. 
Let $\T_\pi$ be the relative tangent bundle of $\pi$ and $e(\T_\pi)$ denote the top Chern class of $\T_\pi$. 
When $\cX=\cX^{\mathrm{s}}$ and $w(\pi)\ne 0$, we have the identity
\beq\label{7} \int_{\cX}\xi=\int_{\cY} \frac{e(\T_\pi)}{w(\pi)}\cup \xi|_{\cY}\eeq
by the projection formula. 
So we can define a generalized intersection pairing as 
\beq\label{8} \int_{\cY}  \frac{e(\T_\pi)}{w(\pi)}\cup \xi|_{\cY} \quad \text{for } \xi\in \mathrm{H}^{\bullet}(\cX).\eeq

\begin{remark}
Note that \cref{7} was used in \cite{Mar} in the form of 
\beq\label{9} \int_{\mu^{-1}(0)/K}\xi =\int_{\mu^{-1}(0)/T} \frac{\cD}{|W|}\cup \xi|_{\mu^{-1}(0)/T}
\eeq
where $\mu:Q\to \bk^*$ is the moment map for a Hamiltonian action of a compact Lie group $K$ with maximal torus $T$, $W$ is the Weyl group of $K$ and $\cD$ is the product of positive roots.
In \cite{jeffrey-kirwan-1998-curves}, Jeffrey and Kirwan 
used \cref{9} to compute the intersection pairings 
when $\cX$ is a moduli space of stable bundles over a curve whose rank and degree are coprime.  

In \cite{Moch, JS}, \cref{8} was adopted for a generalized theory of Donaldson and Donaldson--Thomas invariants where $\cX$ is a moduli stack of semistable sheaves (or perfect complexes) and $\cY$ is a moduli stack of $0^+$-stable pairs. 
\end{remark}

There are many choices of smooth fibrations \cref{6} and we have to deal with the following comparison issue.
\begin{problem}\label{10}
Let $\pi':\cY'\to \cX$ be another such smooth morphism. 
Compute the difference
\beq\label{11a} \int_{\cY}  \frac{e(\T_\pi)}{w(\pi)}\cup \xi|_{\cY} - \int_{\cY'}  \frac{e(\T_{\pi'})}{w(\pi')}\cup \xi|_{\cY'} 
\quad \text{for } \xi\in \mathrm{H}^{\bullet}(\cX).\eeq
\end{problem} 

For instance, when $\cX$ is a moduli stack of semistable bundles over a smooth projective curve, we can think of $0^+$-stable pairs or $0^+$-stable parabolic bundles to construct smooth morphisms over $\cX$. In \S\ref{S:comparison}, we will compute their difference by a wall crossing formula. 

On the other hand, \cref{8} is often easier to compute than \cref{3} because we can vary the stability for $\cY$ and study wall crossings as in the cases of stable pairs or stable parabolic bundles. 
So it is reasonable to consider the following.
\begin{problem}\label{11}
Compute the difference  
\[\int_{\tcX} \xi|_{\tcX}  -\int_{\cY}  \frac{e(\T_\pi)}{w(\pi)}\cup \xi|_{\cY}\]
of \cref{3} and \cref{8}.\end{problem}
As we discussed, \cref{3} is canonical and \cref{8} is more suited for computation. 
Note that this is the method of computing the generalized intersection pairings in \cite{JKKW2} where $\cY$ is the moduli space of stable parabolic bundles with parabolic weights close to $0$.

\subsection{Enumerative invariants by wall crossings}
Let $\cX$ be a (rigidified) moduli stack of objects in (a full subcategory of) an abelian category, semistable with respect to a weak stability condition. Suppose a list of assumptions in  \cite[Assumptions 5.1-5.2]{joyce-wall-crossing} hold. Then by \cite[Theorem 5.7]{joyce-wall-crossing}, we have a homology class
$$[\cX]_{\mathrm{inv}}\in H_*(\cX)$$
which gives us a generalized intersection pairing
\beq\label{e29}
\int_{[\cX]_{\mathrm{inv}}}\xi\quad \text{for }\xi\in \mathrm{H}^{\bullet}(\cX)\eeq
using the tautological pairing $\mathrm{H}^{\bullet}(\cX)\otimes H_*(\cX)\to \QQ$. 

When $\cX$ is a moduli stack of semistable bundles over a curve, by \cite[Theorem 7.63]{joyce-wall-crossing}, there is a wall crossing formula \cref{eq-pi-pair} which compares \cref{e29} with \cref{8} where $\cY$ is the moduli stack of $0^+$-stable pairs $(E,s\in H^0(E))$. 

In this paper, we will compare the above generalized intersection pairings for the moduli spaces of semistable bundles over a smooth projective curve of genus $g\ge 2$.

\subsection{Intersection cohomology pairings}

It was proved in \cite[Theorem 5.3]{kiem-2004} that there is a canonical embedding $$\imath:\mathrm{IH}^{\bullet}(X)\lra \mathrm{H}^{\bullet}(\cX)$$
of the (middle perversity) intersection cohomology of the good moduli space $X$ into the cohomology of the stack $\cX$ when $\cX=[Q^{\mathrm{ss}}/G]$ is the GIT quotient stack of a smooth projective variety $Q$ by a linearized reductive group action with $Q^{\mathrm{s}}\ne \emptyset$ and the action is almost balanced (cf. \cite[Definition 5.1]{kiem-2004}).
For instance, all the moduli stacks of semistable vector bundles over a smooth projective curve satisfy this assumption (cf. \cite[Proposition 7.4]{kiem-2004}). 

The intersection cohomology $\mathrm{IH}^{\bullet}(X)$ does not have a ring structure but it has an intersection pairing which gives us the Poincar\'e duality. 
By \cite[Theorem 5.3]{kiem-2004}, the intersection cohomology pairing can be computed by
$$\langle \alpha,\beta\rangle_{\mathrm{IH}^{\bullet}(X)}=\int_{\widetilde{\cX}} \imath(\alpha)\cup \imath(\beta)|_{\widetilde{\cX}}$$
where $\tcX=[\widetilde{Q}^{\mathrm{s}}/G]$ denotes the partial desingularization in \S\ref{S1} for $\alpha, \beta\in \mathrm{IH}^{\bullet}(X)$ of complementary degrees. 

Considered as cycles on $X$, $\alpha$ and $\beta$ intersect at finitely many points on the stable part $X^{\mathrm{s}}=Q^{\mathrm{s}}/G$. 
As $\imath$ is defined by taking the inverse images of intersection homology cycles by the quotient map $$Q^{\mathrm{ss}}\lra Q \git G=X,$$
$\imath(\alpha)\cup \imath(\beta)$ is represented by a 0-cycle which consists of finitely many points in $\cX^{\mathrm{s}}$. 
In particular, $\imath(\alpha)\cup \imath(\beta)$ is a compactly supported cohomology class. 
By Definition \ref{51}, the following is a direct consequence of \cite[Theorem 5.3]{kiem-2004}.
\begin{proposition}\label{P2.6} The intersection cohomology pairing  can be computed by any generalized intersection pairing $\int_{\cX}$ by 
\beq\label{52}
\int_{\cX} \imath(\alpha)\cup \imath(\beta) = \langle \alpha,\beta\rangle_{\mathrm{IH}^{\bullet}(X)}
\eeq 
when $\cX=[Q^{\mathrm{ss}}/G]$ is the GIT quotient stack of a smooth projective variety with an almost balanced action. 
\end{proposition}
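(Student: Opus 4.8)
The plan is to show that the proposition is essentially immediate from \cite[Theorem 5.3]{kiem-2004} once one records precisely where the class $\imath(\alpha)\cup\imath(\beta)$ lives. First I would recall, following the discussion preceding the proposition, that $\imath$ is defined at the level of cycles, by taking preimages of intersection homology cycles along the quotient map $Q^{\mathrm{ss}}\to Q\git G=X$. For $\alpha,\beta\in\mathrm{IH}^\bullet(X)$ of complementary degrees one can choose representing cycles on $X$ that meet transversally at finitely many points, all lying in the dense stable locus $X^{\mathrm{s}}=Q^{\mathrm{s}}/G$. Pulling back, the product $\imath(\alpha)\cup\imath(\beta)\in\mathrm{H}^{2d}(\cX)$ is then represented by a $0$-cycle supported inside $\cX^{\mathrm{s}}$; in particular it is the image of a canonically determined class $\eta\in\mathrm{H}^{2d}_{\mathrm{c}}(\cX^{\mathrm{s}})$ under the natural map $\mathrm{H}^{\bullet}_{\mathrm{c}}(\cX^{\mathrm{s}})\to\mathrm{H}^{\bullet}(\cX)$.

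Next I would invoke Definition \ref{51} directly. By construction, every generalized intersection pairing $\int_{\cX}$ agrees with the ordinary integral $\int_{\cX^{\mathrm{s}}}$ on classes coming from $\mathrm{H}^{\bullet}_{\mathrm{c}}(\cX^{\mathrm{s}})$, so $\int_{\cX}\imath(\alpha)\cup\imath(\beta)=\int_{\cX^{\mathrm{s}}}\eta$, a quantity manifestly independent of the chosen pairing. It therefore suffices to evaluate it for one convenient choice. Taking the generalized intersection pairing \cref{3} attached to Kirwan's partial desingularization $\tcX$ of \S\ref{S1} --- which restricts to $\int_{\cX^{\mathrm{s}}}$ because $\rho$ is an isomorphism over $\cX^{\mathrm{s}}$ --- gives $\int_{\cX^{\mathrm{s}}}\eta=\int_{\tcX}(\imath(\alpha)\cup\imath(\beta))|_{\tcX}$, and \cite[Theorem 5.3]{kiem-2004} identifies the right-hand side with $\langle\alpha,\beta\rangle_{\mathrm{IH}^{\bullet}(X)}$ (this is where the almost balanced hypothesis on the $G$-action enters, guaranteeing both the existence of $\imath$ and the validity of this computation). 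Concatenating the equalities yields \cref{52}.

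The main point requiring care --- rather than a genuine obstacle --- is the assertion that $\imath(\alpha)\cup\imath(\beta)$ really refines to a compactly supported class on $\cX^{\mathrm{s}}$, and not merely to a class on $\cX$ that happens to be supported away from the strictly semistable locus. This hinges on $\imath$ being induced by an honest chain-level operation, namely pullback of intersection homology cycles under $Q^{\mathrm{ss}}\to X$, which is exactly how it is constructed in \cite{kiem-2004}; granting that, the compatibility of $\int_{\cX^{\mathrm{s}}}$ with the comparison map $\mathrm{H}^{\bullet}_{\mathrm{c}}(\cX^{\mathrm{s}})\to\mathrm{H}^{\bullet}(\cX)$ that is built into Definition \ref{51} does everything else.
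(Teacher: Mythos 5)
Your proposal is correct and follows essentially the same route as the paper: it uses the cycle-level construction of $\imath$ to see that $\imath(\alpha)\cup\imath(\beta)$ is represented by a $0$-cycle in $\cX^{\mathrm{s}}$, hence lies in the image of $\mathrm{H}^{\bullet}_{\mathrm{c}}(\cX^{\mathrm{s}})\to\mathrm{H}^{\bullet}(\cX)$, so that Definition \ref{51} makes the value independent of the chosen generalized pairing, and then identifies that value with $\langle\alpha,\beta\rangle_{\mathrm{IH}^{\bullet}(X)}$ via the partial desingularization pairing and \cite[Theorem 5.3]{kiem-2004}. This matches the argument given in the paper before the proposition.
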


\section{GIT moduli space of \texorpdfstring{$n$}{n} points on projective line}

\label{S:npts}

In this section, we compute generalized intersection pairings on the GIT moduli space of $n$ points on $\PP^1$ up to the automorphism group $\mathrm{Aut}(\PP^1)$, as the first nontrivial example. 

Let $P_n=(\PP^1)^n$ with $n\ge 3$ and $G=\mathrm{SL}_2(\CC)$, $\overline G=\PGL_2(\CC)=\mathrm{Aut}(\PP^1)$. We denote the projection to the $j$th factor by $\pi_j:P_n\to \PP^1$ and let $T\cong \CC^*$ be the maximal torus in $G$. 
The diagonal action of $G$ on $P_n$ can be linearized by the line bundle $\sO(1,\cdots, 1)$ and we consider the semistable (resp.~stable) part $P_n^{\mathrm{ss}}$ (resp. $P_n^{\mathrm{s}}$) which consists of $n$-tuples of points in $\PP^1$ where at most $n/2$ (resp. less than $n/2$) points are allowed to coincide (cf. \cite{MFK}). Let
\beq\label{e1}\cY_n=[P_n^{\mathrm{ss}}/\overline G] \lra P_n \git \overline G=P_n\git G=Y_n\eeq
be the natural morphism from the quotient stack to the GIT quotient scheme. 

In this section, we compute the intersection pairings of cohomology classes in $\mathrm{H}^\bullet (\cY_n)$. 
By \cite{Kir84}, we have a surjective restriction map
\beq\label{e3} \mathrm{H}^\bullet _G(P_n)\lra \mathrm{H}^\bullet _G(P_n^{\mathrm{ss}})\cong \mathrm{H}^\bullet _{\overline G}(P_n^{\mathrm{ss}})\cong\mathrm{H}^\bullet (\cY_n),\quad \eta\mapsto \eta|_{P_n^{\mathrm{ss}}}\eeq 
and if we let $y_i=\pi_i^*c_1(\sO_{\PP^1}(1))$,  we have an isomorphism 
$$\mathrm{H}^\bullet _G(P_n)\cong\QQ[t^2,y_1,\cdots,y_n]/\langle t^2-y_1^2,\cdots, t^2-y_n^2\rangle $$
where $t\in \mathrm{H}^\bullet (BT)$ denotes the first Chern class of the universal line bundle over $BT$. Hence it suffices to compute the generalized intersection pairings on $\cY_n$ of 
\beq\label{e5}
\xi=t^{2m}y_1^{a_1}y_2^{a_2}\cdots y_n^{a_n}|_{P_n^{\mathrm{ss}}}\quad \text{with}\quad 2m+\sum_ia_i=n-3.\eeq 
We will often drop the restriction symbol $|_{P_n^{\mathrm{ss}}}$ to simplify the notation. 

\subsection{Intersection pairings for \texorpdfstring{$n$}{n} odd}\label{S6.1}
When $n$ is odd, $P_n^{\mathrm{ss}}=P_n^{\mathrm{s}}$ and \cref{e1} is an isomorphism. By \cite{JK0, GK, Mar}, the intersection pairngs
$$\int_{Y_n}\xi$$ 
can be computed as follows. 
Let $$\mu:P_n\lra \mathbf{k}^*$$ denote the moment map for the diagonal action of the compact group $K=\mathrm{SU}(2)$  
with $\mathbf{k}=\mathrm{Lie}(K)$ such that we have an inclusion $\mu^{-1}(0)\hookrightarrow P^{\mathrm{ss}}_n$ which induces the diffeomorphism
$$Y_n\cong \mu^{-1}(0)/\overline K,\quad \overline K=K/\{\pm 1\}=\mathrm{PU}(2).$$ 
Let $$\mu_T:P_n\mapright{\mu}\mathbf{k}^*\lra\RR$$
be the moment map for the action of the maximal torus $\mathrm{U}(1)\le K$ where the second arrow is the dual of the inclusion  $\RR=\mathrm{Lie(U}(1))\hookrightarrow  \mathrm{Lie}(K)=\mathbf{k}$. 
In explicit terms, if we identify $\PP^1$ with the unit sphere $S^2\subset \RR^3$, $\mu_T$ is the sum of the $z$-coordinates. 

As $0$ is a regular value of $\mu$, we can reduce the computation to the torus quotient by 
\beq\label{e2}
\int_{Y_n}\xi =\int_{\mu^{-1}(0)/\overline K} \xi= 2\int_{\mu^{-1}(0)/K}\xi=-4\int_{\mu_T^{-1}(0)/T} t^2\xi|_{\mu_T^{-1}(0)}\eeq
by Martin's trick in \cite{martin-1997-thesis}. 
The abelian quotient $\mu_T^{-1}(c)/T$ may change only at odd integers which we call walls. Note that $\mu_T^{-1}(c)=\emptyset$ for $c>n$. 

Now the difference of the intersection pairings as $c$ crosses a wall $c_0$ is computed by the residue formula (cf. \cite{GK})
\beq\label{e17} \int_{\mu_T^{-1}(c_+)/T} t^2\xi|_{\mu^{-1}(c_+)} -
\int_{\mu_T^{-1}(c_-)/T} t^2\xi|_{\mu^{-1}(c_-)} 
=\res_{t=0} \int_{F_{c_0}}\frac{t^2\xi|_{F_{c_0}}}{e(N_{F_{c_0}/P_n})}\eeq 
where $c_-<c_0<c_+$ are close to $c_0$ and $F_{c_0}$ denotes the $T$-fixed point locus in $\mu^{-1}_T(c_0)$. 
Therefore, we obtain the formula 
\beq\label{e4}
\int_{Y_n}\xi = 4\sum_{c>0} \res_{t=0} \int_{F_{c}}\frac{t^2\xi|_{F_{c}}}{e(N_{F_{c}/P_n})}.
\eeq

Let us denote the $T$-fixed points in $\PP^1$ by $p_0=0$ and $p_1=\infty$. With the identification $\PP^1=S^2$, $p_0$ corresponds to the south pole and $p_1$ to the north pole whose moment map values are $-1$ and $1$ respectively. 
Then the $T$-fixed point locus in $P_n$ consists of $2^n$ points $$\{p_\varepsilon=(p_{\varepsilon(1)},\cdots,p_{\varepsilon(n)})\in P_n\}$$
where $\varepsilon:\{1,\cdots, n\}\to\{0,1\}$ is a map. 
Let $$|\varepsilon|=\sum_i \varepsilon(i), \quad |a|=\sum_i a_i, \quad \varepsilon\cdot a=\sum_i a_i\varepsilon(i).$$
With this notation, we find that  $$\mu_T(p_\varepsilon)=\sum_{i=1}^n\mu_T(p_{\varepsilon(i)})=\sum_{i=1}^n (2\varepsilon(i)-1)=2|\varepsilon|-n.$$

Since $T$ acts on the fibre of $\sO_{\PP^1}(1)$ over $p_i$ with weight $(-1)^i$ for $i=0,1$, 
$y_i|_{p_\varepsilon}=(-1)^{\varepsilon(i)}t$ and the restriction of $\xi$ in \cref{e5} to $p_\varepsilon$ is
\beq\label{e6} \xi|_{p_\varepsilon}=(-1)^{\varepsilon\cdot a}t^{n-3}.\eeq

The normal bundle to $p_\varepsilon$ in $P_n$ is the direct sum 
$$\bigoplus_i \pi_j^*\sO_{\PP^1}(2)|_{p_{\varepsilon(j)}}$$
whose $T$-equivariant Euler class is 
\beq\label{e7}
e(N_{p_{\varepsilon}/P_n})=(2t)^{n-|\varepsilon|}(-2t)^{|\varepsilon|}=(-1)^{|\varepsilon|}2^nt^n.
\eeq

Combining \cref{e6} and \cref{e7}, we find that \cref{e4} is 
\beq\label{e8}
\int_{Y_n}\xi=\int_{Y_n} t^{2m}y_1^{a_1}y_2^{a_2}\cdots y_n^{a_n} =2^{2-n}\sum_{\varepsilon:|\varepsilon|>n/2} (-1)^{\varepsilon\cdot a+|\varepsilon|}.
\eeq
In particular, when $n=3$, $\int_{Y_3}1=1$ as $Y_3=\mathrm{pt}$.

\subsection{Generalized intersection pairings for \texorpdfstring{$n$}{n} even}
When $n=2r$ is even, $\cY_n=\cY_{2r}$ is an Artin stack for which we consider the generalized intersection pairings of 
\beq\label{e9}\xi=t^{2m}y_1^{a_1}y_2^{a_2}\cdots y_{2r}^{a_{2r}}\quad \text{with}\quad 2m+|a|=2r-3.\eeq
We note that $|a|$ is odd. 

\subsubsection{Intersection pairings by a smooth resolution}
\label{subsubsec-npts-smooth-resolution}

The most obvious smooth resolution of $\cY_{2r}=[P_{2r}^{\mathrm{ss}}/\overline G]$ is induced by the map $$P_{2r+1}=(\PP^1)^{2r+1}\lra (\PP^1)^{2r}=P_{2r}$$ which forgets the last factor. Since the stable part $P_{2r+1}^{\mathrm{s}}$ consists of $(2r+1)$-tuples where at most $r$ points are allowed to coincide, the image of $P^{\mathrm{s}}_{2r+1}$ by the forgetful map lies in the semistable part $P_{2r}^{\mathrm{ss}}$. 
Therefore, we have a smooth morphism 
\beq\label{e11}Y_{2r+1}=\cY_{2r+1}=[P_{2r+1}^{\mathrm{s}}/\overline G]\lra [P^{\mathrm{ss}}_{2r}/\overline G]=\cY_{2r}\eeq
which is a $\PP^1$-bundle over the stable part $\cY_{2r}^{\mathrm{s}}=[P_{2r}^{\mathrm{s}}/\overline G]=Y_{2r}^{\mathrm{s}}$.

The Euler class of the relative tangent bundle of \cref{e11} is $2y_{2r+1}$ and the topological Euler characteristic of the general fibre $\PP^1$ is $2$. Therefore, the generalized intersection pairing of \cref{e9} by the smooth resolution \cref{e11} is
\beq\label{e13}
\int_{Y_{2r+1}}\xi\cdot \frac{2y_{2r+1}}{2}=\int_{Y_{2r+1}} \xi\cdot y_{2r+1}. \eeq

By \cref{e8}, it is now straightforward to compute \cref{e13} with $\xi$ in \cref{e9}. Indeed, for $\varepsilon:\{1,2,\cdots, 2r\}\to \{0,1\}$, we have  
\beq\label{e10}
\int_{Y_{2r+1}}\xi\cdot y_{2r+1} = 
2^{1-2r}\sum_{\varepsilon:|\varepsilon|\ge r+1}(-1)^{\varepsilon\cdot a+|\varepsilon|} 
+ 2^{1-2r} \sum_{\varepsilon:|\varepsilon|\ge r}(-1)^{\varepsilon\cdot a+|\varepsilon|} 
\eeq
where the first term on the right hand side is the sum over $\varepsilon$ with the last point at $p_0=0\in \PP^1$ and the second term is over $\varepsilon$ with the last point at $p_1=\infty\in \PP^1$. 
When $|\varepsilon|=r$, $1-\varepsilon$ is also a map to $\{0,1\}$ with $|1-\varepsilon|=r$. Since 
$(1-\varepsilon)\cdot a+|1-\varepsilon|=|a|-\varepsilon\cdot a+2r-|\varepsilon|$ and $|a|$ is odd, we have
$$(-1)^{(1-\varepsilon)\cdot a+|1-\varepsilon|}=-(-1)^{\varepsilon\cdot a+|\varepsilon|}$$
and hence 
\beq\label{e15}\sum_{\varepsilon:|\varepsilon|= r}(-1)^{\varepsilon\cdot a+|\varepsilon|} =\sum_{\varepsilon:|\varepsilon|= r}(-1)^{\varepsilon\cdot a+r} =0\eeq
Therefore we find that the generalized intersection pairing \cref{e13} is 
\beq\label{e14}
\int_{Y_{2r+1}}\xi\cdot \frac{2y_{2r+1}}{2}=
2^{2-2r}\sum_{\varepsilon:|\varepsilon|\ge r+1}(-1)^{\varepsilon\cdot a+|\varepsilon|}.\eeq

\begin{remark}  We may consider $Y_{2r+1}$ as the non-reductive GIT quotient
\[ P_{2r} \git B=(P_{2r}\times (G/B)) \git G\]
where $B$ is a Borel subgroup of $G$ and the right hand side is the GIT quotient with respect to the ample line bundle $\sO_{P_{2r}}(1)\boxtimes \sO_{G/B}(\epsilon)$ for sufficiently small $\epsilon>0$. 
It is straightforward to see that the induced map 
$$P_{2r} \git B=(P_{2r}\times (G/B)) \git G\lra P_{2r} \git G$$
is the morphism of good moduli spaces induced from \cref{e11}.  
\end{remark}

\subsubsection{Intersection pairings by partial desingularization}

Now we compute the generalized intersection pairing of $\xi$ in \cref{e9} by using the partial desingularization $\widetilde{Y}_{2r}$ of $Y_{2r}$. 

As in \S\ref{S6.1}, for sufficiently small $\epsilon>0$ in $\mathrm{Lie}(T)^*=\RR$, we can compute 
\beq\label{e16}
2\int_{\mu^{-1}(\epsilon)/T}\xi\, \frac{2t}{2}=-4\int_{\mu_T^{-1}(\epsilon)/T} t^2\xi\eeq
by the resudue formula \cref{e17} as 
\beq\label{e20} 4\sum_{\varepsilon:|\varepsilon|> r}\res_{t=0}\frac{t^2\xi|_{p_\varepsilon}}{e(N_{p_\varepsilon/P_{2r}})}=2^{2-2r}\sum_{\varepsilon:|\varepsilon|\ge r+1} (-1)^{\varepsilon\cdot a+|\varepsilon|}.\eeq

Observe that the strictly semistable points with reductive stabilizer groups in $P_{2r}$ are precisely the $G$-orbits of $p_\varepsilon$ with $$\varepsilon:\{1,2,\cdots,2r\}\lra  \{0,1\},\quad  |\varepsilon|=r.$$ Hence we blow up $P_{2r}^{\mathrm{ss}}$ along $\sqcup_{|\varepsilon|=r}Gp_\varepsilon$ and delete unstable points to get $\widetilde{P}_{2r}^{\mathrm{s}}$. The partial desingularization is then the quotient 
$\widetilde{Y}_{2r} =[\widetilde{P}_{2r}^{\mathrm{s}}/\overline G]
=\widetilde{P}_{2r} \git G$ of $\widetilde{P}_{2r}^{\mathrm{s}}$.

The new $T$-fixed point set after the blowup that lies over the fixed point $p_\varepsilon$ with $|\varepsilon|=r$ is 
$\PP W_\varepsilon^+\sqcup \PP W^-_\varepsilon$ where $$N_{Gp_\varepsilon/P_{2r}}=W^+_\varepsilon\oplus W_\varepsilon^-$$ 
is the decomposition of the normal space $N_{Gp_\varepsilon/P_{2r}}$ to $Gp_\varepsilon$ into positive and negative weight parts of the induced $T$-action. 
Hence the only new wall crossing terms after the blowup are 
\beq\label{e19}
4\sum_{|\varepsilon|=r} \res_{t=0} \int_{\PP W_\varepsilon^+} \frac{t^2\xi|_{p_\varepsilon}}{e(N_{\PP W^+_\varepsilon/\widetilde{P}_{2r}})}.
\eeq
By \cref{e6}, $\xi|_{p_\varepsilon}=(-1)^{\varepsilon\cdot a}t^{2r-3}$. Since 
$$N_{\PP W^+_\varepsilon/\widetilde{P}_{2r}}=\sO_{\PP W_\varepsilon^+}(1)\otimes W_\varepsilon^-\oplus \sO_{\PP W_\varepsilon^+}(-1)\oplus N_{p_\varepsilon/Gp_\varepsilon},$$ we find that 
$$e(N_{\PP W^+_\varepsilon/\widetilde{P}_{2r}})=4t^2(y-2t)^r.$$
Hence, \cref{e19} is 
    $$4\sum_{|\varepsilon|=r} \res_{t=0} \int_{\PP W_\varepsilon^+} \frac{(-1)^{\varepsilon\cdot a}t^{2r-1}}{4t^2(y-2t)^r}
 =\sum_{|\varepsilon|=r} \res_{t=0} \int_{\PP^{r-2}} (-1)^{\varepsilon\cdot a+r}2^{-r}t^{r-3}\left(1-\frac{y}{2t}\right)^{-r}$$
    which equals 
    $$\sum_{|\varepsilon|=r} \res_{t=0} \int_{\PP^{r-2}} (-1)^{\varepsilon\cdot a+r}2^{-r}t^{r-3}\sum_k \binom{-r}{k}\left(\frac{-y}{2t}\right)^k$$
    $$=\sum_{|\varepsilon|=r}\int_{\PP^{r-2}} (-1)^{\varepsilon\cdot a+r}2^{-r} \binom{-r}{r-2}\frac{(-1)^{r-2}y^{r-2}}{2^{r-2}}.$$
    Since $\int_{\PP^{r-2}}y^{r-2}=1$ and $$\binom{-r}{r-2}=(-1)^{r-2}\binom{2r-3}{r-2},$$
    we find that \cref{e19} equals 
    $$2^{2-2r}\binom{2r-3}{r-2}\sum_{|\varepsilon|=r} (-1)^{\varepsilon\cdot a+r}$$
    which vanishes by \cref{e15}.

    By adding \cref{e20} and \cref{e19}, we find that the generalized intersection pairing of $\xi$ by the partial desingularization $\widetilde{Y}_{2r}$ is 
\beq\label{e21}
\int_{\widetilde{Y}_{2r}}\xi|_{\widetilde{Y}_{2r}} = 2^{2-2r}\sum_{\varepsilon:|\varepsilon|\ge r+1} (-1)^{\varepsilon\cdot a+|\varepsilon|}.
\eeq 

\subsubsection{A comparison theorem}
By comparing \cref{e14} with \cref{e21}, we find the equality of generalized intersection pairings by the smooth resolution $Y_{2r+1}=\cY_{2r+1}\to \cY_{2r}$ and by the partial desingularization $\widetilde{Y}_{2r}\to \cY_{2r}$. 
\begin{proposition}\label{P3.2}
For any $\xi\in \mathrm{H}^\bullet (\cY_{2r})$, we have the equality 
\beq\label{e22}
\int_{\widetilde{Y}_{2r}}\xi|_{\widetilde{Y}_{2r}} =\int_{Y_{2r+1}} \xi|_{Y_{2r+1}}\cup \frac{e(T_{\cY_{2r+1}/\cY_{2r}})}{\chi(\PP^1)}. 
\eeq
\end{proposition}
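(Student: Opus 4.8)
The plan is to observe that both sides of \cref{e22} were already computed in the two preceding subsections and that the two answers coincide, so that the proof amounts to comparing \cref{e14} with \cref{e21} after a routine reduction. By the surjectivity in \cref{e3}, the group $\mathrm{H}^\bullet(\cY_{2r})$ is spanned by the restrictions of the monomials $t^{2m}y_1^{a_1}\cdots y_{2r}^{a_{2r}}$, and both sides of \cref{e22} are $\QQ$-linear in $\xi$, so it suffices to check the identity on such a monomial. A degree count then reduces to a single degree: the left-hand side integrates a class over the $(2r-3)$-dimensional Deligne--Mumford stack $\widetilde{Y}_{2r}$, while the right-hand side integrates a class cupped with the degree-$2$ class $y_{2r+1}$ over the $(2r-2)$-dimensional $Y_{2r+1}$, so both vanish unless $\deg\xi=2(2r-3)$, i.e.\ unless $2m+|a|=2r-3$ as in \cref{e9}; in particular $|a|$ is then odd.

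So take $\xi$ as in \cref{e9}. For the right-hand side, one has $e(T_{\cY_{2r+1}/\cY_{2r}})=2y_{2r+1}$ and $\chi(\PP^1)=2$, so the integrand is $\xi\cup y_{2r+1}$, and the integral is \cref{e10}, which simplifies to \cref{e14}. For the left-hand side, $\int_{\widetilde{Y}_{2r}}\xi|_{\widetilde{Y}_{2r}}$ is computed, exactly as in \cref{S6.1} but now on the blown-up space $\widetilde{P}_{2r}$, as the sum of the old wall-crossing contributions \cref{e20} and the new contributions \cref{e19} coming from the exceptional divisors; since \cref{e19} vanishes by \cref{e15}, the left-hand side reduces to \cref{e21}. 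As \cref{e14} and \cref{e21} are literally the same sum $2^{2-2r}\sum_{|\varepsilon|\ge r+1}(-1)^{\varepsilon\cdot a+|\varepsilon|}$, the identity \cref{e22} follows.

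There is no serious obstacle here, since both integrals have been evaluated in closed form; the one point that actually makes the comparison work is the cancellation \cref{e15}. It is simultaneously responsible for the vanishing of the blow-up correction \cref{e19} on the partial-desingularization side and for the collapse of the two sums in \cref{e10} — the disappearance of the $|\varepsilon|=r$ terms, i.e.\ of the contribution of the strictly semistable orbits — on the smooth-fibration side. Both of these vanishings are forced by the parity of $2r-3$, which makes $|a|$ odd, and this is what I would highlight as the heart of the statement.
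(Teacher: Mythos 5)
Your proposal is correct and follows essentially the same route as the paper: the paper computes the smooth-fibration pairing in \cref{e10}--\cref{e14} and the partial-desingularization pairing in \cref{e20}--\cref{e21}, both using the cancellation \cref{e15}, and Proposition \ref{P3.2} is then exactly the observation that \cref{e14} and \cref{e21} coincide. Your added remarks on linearity via \cref{e3} and the degree reduction are routine points left implicit in the paper, and your identification of \cref{e15} as the common mechanism behind both vanishings matches the paper's computation.
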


\subsubsection{Intersection cohomology pairings}\label{S3.2.4} 

The action of $G$ on $P_{2r}$ is weakly balanced (cf. \cite[Definition 7.2]{kiem-2004}) and hence we have an embedding 
$$\imath:\mathrm{IH}^\bullet (Y_{2r})\hookrightarrow \mathrm{H}^\bullet _G(P_{2r}^{\mathrm{ss}})=\mathrm{H}^\bullet (\cY_{2r})$$
whose image is  
\beq\label{e26}
\mathrm{IH}^\bullet (Y_{2r})\cong \imath(\mathrm{IH}^\bullet (Y_{2r}))=\{\xi\in \mathrm{H}^\bullet _G(P_{2r}^{\mathrm{ss}})\,|\, \xi|_{p_\varepsilon}\in H^{\le 2r-4}_T\ \ \forall \varepsilon \text{ with }|\varepsilon|=r\}\eeq
by \cite[Theorem 7.7]{Kie1}. 
In particular, the Poincar\'e polynomial of $\mathrm{IH}^\bullet (Y_{2r})$ is
$$\mathrm{IP}_t(Y_{2r})=\sum_j z^j\dim \mathrm{IH}^j(Y_{2r})=\frac{(1+z^2)^{2r}}{1-z^4}-\frac12 \binom{2r}{r} \frac{z^{2r-2}}{1-z^2}$$
because the restriction map $\mathrm{H}^{\ge 2r-2}_G(P_{2r}^{\mathrm{ss}})\to \left(\bigoplus_{|\varepsilon|=r}\mathrm{H}^{\ge 2r-2}_T(p_\varepsilon)\right)^{\ZZ_2}$ is surjective by \cite[Lemma 9.2]{kiem-2004}. 
Moreover, by \cite[Theorem 5.4]{kiem-2004} and Proposition \ref{P2.6}, the intersection pairing of $\alpha,\beta\in \mathrm{IH}^\bullet (Y_{2r})$ is 
\beq\label{e40}\langle \alpha,\beta\rangle_{\mathrm{IH}^\bullet (Y_{2r})}= \int_{\widetilde{Y}_{2r}}\imath(\alpha)\cup\imath(\beta)|_{\widetilde{Y}_{2r}}=\int_{{Y}_{2r+1}}\imath(\alpha)\cup\imath(\beta)|_{{Y}_{2r+1}}\cdot y_{2r+1}\eeq
which is computed by \cref{e14} or \cref{e21}.

\begin{example}
Let $r=2$. By \cref{e26}, it is easy to see that $I\!H^2(Y_4)=\QQ (y_1+y_2+y_3+y_4)$. By \cref{e21}, we find that 
\beq\label{e39}\langle 1,y_1+y_2+y_3+y_4\rangle_{\mathrm{IH}^\bullet (Y_4)} = \int_{\widetilde{Y}_4} y_1+y_2+y_3+y_4 = 1.\eeq 
\end{example}

\begin{remark}
    We expect that using \emph{intrinsic Donaldson--Thomas theory}
    \cite{intrinsic-dt-i,intrinsic-dt-ii,intrinsic-dt-iii},
    one can generalize the work of \textcite{joyce-wall-crossing}
    to construct generalized intersection pairings for
    quasi-smooth Artin stacks.
    In particular, we should have an intrinsic class
    $a_{2r} \in \mathrm{H}_{4r-6} (\mathcal{Y}_{2r})$,
    using a natural choice of
    \emph{stability measure} \cite{intrinsic-dt-ii} on~$\mathcal{Y}_{2r}$,
    which is a generalized notion of stability for Artin stacks.
    The class~$a_{2r}$ should agree with the intersection pairing
    computed by both sides of \cref{e22}.
\end{remark}

\section{Moduli spaces of vector bundles with extra structure}

\label{sec-moduli}

Let~$C$ be a connected smooth projective curve over~$\mathbb{C}$.
We consider the moduli stack
\[
    \mathcal{M} = \coprod_{(r, d)} \mathcal{M}_{r, d}
\]
of vector bundles on~$C$, where
$(r, d) \in \{ (0, 0) \} \cup (\mathbb{Z}_{> 0} \times \mathbb{Z})$,
and each $\mathcal{M}_{r, d}$ is the connected component
of vector bundles of rank~$r$ and degree~$d$.
We also consider the $\mathbb{C}^\times$-rigidification
\[
    \mathcal{M}^{\mathrm{rig}} =
    \coprod_{(r, d)} \mathcal{M}^{\smash{\mathrm{rig}}}_{r, d} \ ,
\]
obtained from~$\mathcal{M}$ by removing scalar automorphisms
from all stabilizer groups.
For each $(r, d) \neq (0, 0)$,
we have the open substack
\[
    \mathcal{M}^{\mathrm{ss}}_{r, d} \subset
    \mathcal{M}^{\smash{\mathrm{rig}}}_{r, d}
\]
of semistable vector bundles.
It is a smooth projective variety when $r, d$ are coprime,
but is genuinely stacky when $r, d$ are not coprime.

In the following, we introduce three variants of
the moduli stack~$\mathcal{M}$ of vector bundles,
which all admit smooth morphisms to~$\mathcal{M}$.
Namely, we consider moduli stacks of
\emph{parabolic bundles},
\emph{pairs}, and
\emph{parabolic triples}.
These can all be used to define generalized intersection pairings
on~$\mathcal{M}^\mathrm{ss}_{r, d}$,
and we will compare the results later on.

\subsection{Parabolic bundles}

Let~$C$ be a curve as above,
and let~$p \in C$ be a $\mathbb{C}$-point.
We consider vector bundles on~$C$
with a parabolic structure at~$p$;
see \cite{mehta-seshadri-1980-moduli,seshadri-1977}.

\begin{definition}
    Let $\ell > 0$ be a fixed integer.
    A \emph{parabolic bundle} on~$C$ is a pair $(E, F)$, where
    \footnote{
        In the literature, such as in \cite{mehta-seshadri-1980-moduli},
        this is sometimes called a \emph{quasi-parabolic bundle},
        in which case a parabolic bundle is this data plus a choice
        of parabolic weights, as in \cref{def-par-weights}.
    }
    \begin{itemize}
        \item
            $E \to C$ is a vector bundle, and
        \item
            $F = (0 = F_0 \subset F_1 \subset \cdots \subset F_\ell = E |_p)$
            is a flag of subspaces of the fibre~$E |_p$.
    \end{itemize}
    Given a parabolic bundle $(E, F)$, we write
    \begin{itemize}
        \item
            $r = \operatorname{rank} E$,
        \item
            $d = \deg E$, and
        \item
            $f = (f_1, \dotsc, f_\ell)$,
            where $f_i = \dim F_i - \dim F_{i-1}$,
            and $|f| = \sum f_i = r$.
    \end{itemize}
    We call $(r, d, f)$ the \emph{type} of~$(E, F)$,
    and call~$f$ the (\emph{parabolic}) \emph{type} of~$F$.

    There is a moduli stack of parabolic bundles,
    \[
        \mathcal{P} =
        \coprod_{(r, d, f)} \mathcal{P}_{r, d, f} \ ,
        \qquad
        \mathcal{P}_{r, d, f} =
        \mathcal{M}_{r, d}
        \underset{[* / \mathrm{GL}_r]}{\times}
        [* / P_f] \ ,
    \]
    where $P_f \subset \mathrm{GL}_r$
    is the parabolic subgroup which is the stabilizer
    of a flag of type~$f$ in~$\mathbb{C}^r$,
    and the morphism $\mathcal{M}_{r, d} \to [* / \mathrm{GL}_r]$
    is the one classifying the vector bundle
    $\mathcal{U} |_{\mathcal{M}_{r, d} \times \{ p \}}$ of rank~$r$,
    where $\mathcal{U} \to \mathcal{M} \times C$ is the universal bundle.
    Note that although the definition of~$\mathcal{P}$
    depends on~$\ell$, we omit it from the notation.

    We also consider the $\mathbb{C}^\times$-rigidification
    $\mathcal{P}^{\mathrm{rig}} =
    \coprod_{(r, d, f)}
    \mathcal{P}^{\smash{\mathrm{rig}}}_{r, d, f}$
    of~$\mathcal{P}$.

    When $r = \ell$ and $f = 1^r = (1, \dotsc, 1)$,
    we omit~$f$ from the notation, and write
    \[
        \mathcal{P}_{r, d} = \mathcal{P}_{r, d, 1^r} \ ,
        \qquad
        \mathcal{P}^{\smash{\mathrm{rig}}}_{r, d}
        = \mathcal{P}^{\smash{\mathrm{rig}}}_{r, d, 1^r} \ .
    \]
\end{definition}

There is a family of interesting stability conditions
for parabolic bundles, parametrized by \emph{parabolic weights}.

\begin{definition}
    \label{def-par-weights}
    A \emph{parabolic weight} is a sequence of numbers
    \[
        c = \Bigl( \ 
            0 \leq c_1 \leq \cdots \leq c_\ell \leq 1
        \ \Bigr) \ .
    \]
    For a parabolic weight~$c$
    and a type $(r, d, f)$ with $r > 0$,
    define the slope function
    \[
        \mu_c (r, d, f) =
        \frac{1}{r} \biggl(
            d
            - \sum_{i=1}^{\ell} c_i f_i
        \biggr) \ .
    \]
    We have the $\mu_c$-semistable locus, which is an open substack
    \[
        \mathcal{P}^\mathrm{ss}_{r, d, f} (c) =
        \mathcal{P}^\mathrm{ss}_{r, d, f} (\mu_c) \subset
        \mathcal{P}^{\smash{\mathrm{rig}}}_{r, d, f} \ .
    \]
    We introduce the following terminology:
    \begin{itemize}
        \item
            We say that~$c$ is \emph{generic} for the type $(r, d, f)$,
            if for any type $(r', d', f')$ with $0 < r' < r$,
            if we have $\mu_c (r', d', f') \neq \mu_c (r, d, f)$.
            In this case,
            there are no strictly $\mu_c$-semistable parabolic bundles in this type,
            and $\mathcal{P}^\mathrm{ss}_{r, d, f} (c)$ is a smooth projective variety.

        \item
            We say that~$c$ is \emph{close to~$0$} for the type $(r, d, f)$,
            if for any type $(r', d', f')$ with $0 < r' < r$
            and $d'/r' < \mathrel{(>)} d/r$,
            we have $\mu_c (r', d', f') < \mathrel{(>)} \mu_c (r, d, f)$.
            In this case, we have an inclusion
            $\mathcal{P}^\mathrm{ss}_{r, d, f} (c) \subset
            \mathcal{P}^\mathrm{ss}_{r, d, f} (0)$
            as an open substack.
    \end{itemize}
    Again, when $r = \ell$ and $f = 1^r$, we omit~$f$ from the notation.
    In this case, we also say that~$c$ is \emph{generic} or \emph{close to~$0$}
    for the type $(r, d)$ if it is so for the type $(r, d, 1^r)$.
\end{definition}

\subsection{Pairs and parabolic triples}

\begin{definition}
    Fix an integer $N$.
    A \emph{pair} on~$C$ is the data $(E, V)$, where
    \begin{itemize}
        \item
            $E \to C$ is a vector bundle, and
        \item
            $V \subset \mathrm{H}^0 (C, E (N))$
            is a linear subspace.
    \end{itemize}
    We write $v = \dim V$, and call
    $(r, d, v)$ the \emph{type} of the pair $(E, V)$.
    There is a moduli stack
    \[
        \mathcal{M}' =
        \coprod_{(r, d, v)} \mathcal{M}'_{r, d, v}
    \]
    of pairs on~$C$, obtained as an open substack of the stack in
    \cite[Definition~8.4]{joyce-wall-crossing}
    consisting of points $(E, V, \rho)$ using the notation there, such that
    $\rho \colon V \to \mathrm{H}^0 (C, E (N))$ is injective.

    We also consider its
    $\mathbb{C}^\times$-rigidification,
    $\mathcal{M}'^{\mathrm{rig}} =
    \coprod_{(r, d, v)} \mathcal{M}'^{\smash{\mathrm{rig}}}_{r, d, v}$.
\end{definition}

Note that a difference between our definition of pairs
and Joyce's \cite{joyce-song-2012,joyce-wall-crossing}
is that in Joyce's formalism, a pair consists of~$E$ and a map
$\rho \colon V \to \mathrm{H}^0 (C, E (N))$,
instead of a subspace.
Our definition is equivalent to requiring that~$\rho$ is injective,
and the purpose is to make the forgetful morphism
$\mathcal{M}'_{r, d, 1} \to \mathcal{M}_{r, d}$
a projective bundle,
so that it is more convenient for defining the map~$\Pi$
in \cref{sec-map-pi} below.

\begin{definition}
    \label{def-triples}
    A \emph{parabolic triple} is a triple $(E, F, V)$, where
    \begin{itemize}
        \item
            $(E, F)$ is a parabolic bundle on~$C$, and
        \item
            $(E, V)$ is a pair on~$C$.
    \end{itemize}
    Let $f$ be the parabolic type of~$F$
    and $v = \dim V$ as before,
    and call $(r, d, f, v)$ the \emph{type} of~$(E, F, V)$.

    There is a moduli stack
    of parabolic triples on~$C$,
    \[
        \mathcal{P}' =
        \mathcal{P} \underset{\mathcal{M}}{\times} \mathcal{M}' =
        \coprod_{(r, d, f, v)} \mathcal{P}'_{r, d, f, v} \ ,
        \qquad
        \mathcal{P}'_{r, d, f, v} =
        \mathcal{P}_{r, d, f}
        \underset{\mathcal{M}_{r, d}}{\times} \mathcal{M}'_{r, d, v} \ .
    \]
    We also consider its $\mathbb{C}^\times$-rigidification
    $\mathcal{P}'^{\mathrm{rig}} =
    \coprod_{(r, d, f, v)} \mathcal{P}'^{\smash{\mathrm{rig}}}_{r, d, f, v}$.

    When $r = \ell$ and $f = 1^r = (1, \dotsc, 1)$,
    we omit~$f$ from the notation, and write
    \[
        \mathcal{P}'_{r, d, v}
        = \mathcal{P}'_{r, d, 1^r, v} \ ,
        \qquad
        \mathcal{P}'^{\smash{\mathrm{rig}}}_{r, d, v}
        = \mathcal{P}'^{\smash{\mathrm{rig}}}_{r, d, 1^r, v} \ .
    \]
\end{definition}

\begin{definition}
    \label{def-triple-slope}
    For a parabolic weight~$c$,
    a parameter $\varepsilon \in \mathbb{R}_{> 0}$,
    a type of pairs $(r, d, v)$,
    and a type of triples $(r, d, f, v)$,
    with $(r, d, v) \neq (0, 0, 0)$ in both cases,
    define the slope functions
    \begin{align*}
        \mu^\varepsilon (r, d, v)
        & =
        \frac{d + \varepsilon v}{r} \ ,
        \\
        \mu_c^\varepsilon (r, d, f, v)
        & =
        \frac{1}{r} \biggl(
            d
            - \sum_{i=1}^{\ell} c_i f_i
            + \varepsilon v
        \biggr) \ .
    \end{align*}
    We write
    $\mu^+ = \mu^\varepsilon$ and
    $\mu_c^+ = \mu_c^\varepsilon$
    for $\varepsilon > 0$ sufficiently small
    (depending on~$c$ in the latter case);
    we define the slope to be~$+\infty$ when $r = 0$.
    We have the $\mu^+$- and $\mu_c^+$-semistable loci
    \begin{alignat*}{2}
        \mathcal{M}'^{\mathrm{ss}}_{r, d, v}
        & =
        \mathcal{M}'^{\mathrm{ss}}_{r, d, v} (\mu^+)
        && \subset
        \mathcal{M}'^{\smash{\mathrm{rig}}}_{r, d, v} \ .
        \\
        \mathcal{P}'^{\mathrm{ss}}_{r, d, f, v} (c)
        & =
        \mathcal{P}'^{\mathrm{ss}}_{r, d, f, v} (\mu_c^+)
        && \subset
        \mathcal{P}'^{\smash{\mathrm{rig}}}_{r, d, f, v} \ .
    \end{alignat*}
    When $N > 2g - 2 - d/r$,
    the stacks $\mathcal{M}'^{\mathrm{ss}}_{r, d, v}$
    and $\mathcal{P}'^{\mathrm{ss}}_{r, d, f, v} (c)$
    are smooth.
    This is because for a semistable vector bundle~$E$ of slope $d/r$,
    we have $\mathrm{H}^1 (C, E (N)) = 0$,
    so that $\mathrm{Ext}^2 ((E, V), (E, V)) = 0$
    for pairs $(E, V)$ in a bigger abelian category of pairs of coherent sheaves;
    see \cite[Eq.~(8.30)]{joyce-wall-crossing}.

    In particular, if additionally $v = 1$,
    then all semistable pairs and triples are stable,
    and $\mathcal{M}'^{\mathrm{ss}}_{r, d, 1}$ and $\mathcal{P}'^{\mathrm{ss}}_{r, d, f, 1} (c)$
    are smooth projective varieties.
\end{definition}

\section{Generalized intersection pairings by parabolic bundles and by partial desingularizations}

\label{sec-par}

\def\bcG{\overline{\cG}}
\def\modext{M^{\mathrm{ext}}}
\def\tmu{\tilde{\mu}}
\def\tcM{\widetilde{\cM}}

In this section, we recall the generalized intersection pairings on the moduli stack $\cM_{r,d}^{\mathrm{ss}}$ of rank $r$ semistable bundles of degree $d$ over a smooth projective curve $C$ of genus $g\ge 2$ by the partial desingularization (cf. \cite{kirwan-1985-partial}) and by the the moduli space of stable parabolic bundles with parabolic weights close to zero (cf. \cite{jeffrey-kirwan-1998-curves}).
We further recall from \cite{jeffrey-kiem-kirwan-woolf-2006-intersection} a systematic method to compute the difference of the two generalized intersection pairings. 
See \S\ref{S8.2} for explicit computations when $r=2$.

\subsection{Generalized intersection pairings by stable parabolic bundles}\label{S3.2}

As a topological stack, $\cM^{\mathrm{ss}}_{r,d}$ is the symplectic reduction 
of the space $\cA_{r,d}$ of unitary connections on a fixed hermitian vector bundle of rank $r$ and degree $d$ by the projective linear gauge group $\bcG=\cG/\CC^{\times}$. 
Fix $p\in C$ and let $\bcG_0$ be the kernel of the restriction map 
$\bcG\lra \bcG|_p=\mathrm{PU}(r)=:{K}.$
The symplectic reduction of $\cA_{r,d}$ by $\bcG_0$ is the \emph{extended moduli space} in \cite{Jeff} which we denote by $\modext$. By the residual action of $K$, the coarse moduli space of $\cM^{\mathrm{ss}}_{r,d}$ is now the symplectic reduction
$M_{r,d}\cong \mu^{-1}(0)/K$
of $\modext$ by $K$ where $\mu:\modext\to \bk^*\cong \bk$ denotes the moment map. 

Let $\bcG^{\CC}$ and $\bcG^{\CC}_0$ denote the complexifications of $\bcG$ and $\bcG_0$ respectively. 
By \cite{AB, Kir84}, the moduli stack $\cM^{\mathrm{ss}}_{r,d}$ 
is an open substack of the holomorphic quotient
$\cA_{r,d}/\bcG^{\CC}$ 
and $\modext\subset \cA_{r,d}/\bcG^{\CC}_0$ is open in the moduli stack of pairs of a holomorphic vector bundle
$E$ and a framing $E|_p\cong \CC^r$ at $p$. 
For sufficiently small general $c$ in the positive Weyl chamber of $K$, we have an isomorphism as topological stacks 
$$\cP_{r,d}^{\mathrm{s}}(c)=\cP_{r,d}^{\mathrm{ss}}(c)\cong [\mu^{-1}(c)/T]$$
of the moduli stack of $c$-stable parabolic bundles on $C$ with the quotient stack of $\mu^{-1}(c)$ where $T$ denotes a maximal torus of $K$. See \cite{Jeff} for more details.

As $c$ is general and close to $0$, the forgetful morphism $$\omega:\cP_{r,d}^{\mathrm{s}}(c)\lra \cM^{\mathrm{ss}}_{r,d}$$
is smooth and $\cP_{r,d}^{\mathrm{s}}(c)$ is proper \DM. 
The Euler class of the relative tangent bundle of $\omega$ is the product $\cD_r$ of positive roots of $\mathrm{SL}_r$ and a fibre over $\cM_{r,d}^{\mathrm{s}}$ is the flag variety $\Fl(r)$ whose Euler characteristic is $r!$. Thus \cref{8} gives us a generalized intersection pairing  
\beq\label{17}
\mathrm{H}^\bullet (\cM^{\mathrm{ss}}_{r,d})\lra \QQ, \quad 
\xi\mapsto 
\langle \xi\rangle_{\mathrm{JK}_{r,d}(c)}:=\int_{\cP^{\mathrm{s}}_{r,d}(c)} \frac{e(\T_\omega)}{w(\omega)}\cup \xi|_{\cP^{\mathrm{s}}_{r,d}(c)}=\int_{\mu^{-1}(c)/ T} \frac{\cD_r}{r!}\cup \xi|_{\mu^{-1}(c)/ T}.
\eeq
The computation of \cref{17} was worked out by Jeffrey and Kirwan in \cite[Proposition 8.4]{jeffrey-kirwan-1998-curves} 
by computing the wall crossing terms as we vary $c$ and then using the periodicity.
For convenience, we call \cref{17} the \emph{Jeffrey--Kirwan pairing}.
Equivalently, we can think of the Jeffrey--Kirwan pairing \cref{17} 
as a homology class (cf. Definition \ref{def-jkrd}) 
\beq\label{e51}
\mathrm{JK}_{r,d}(c)=\Omega (P_{r,d}(c))\in \mathrm{H}_\bullet(\cM_{r,d}^{\mathrm{ss}})\eeq 
where $P_{r,d}(c)=[\cP^{\mathrm{s}}_{r,d}(c)]\in \mathrm{H}_\bullet(\cP^{\mathrm{s}}_{r,d}(c))$ denotes the fundamental class and 
\beq\label{e53} \Omega:=\omega_*\left((-)\cap \frac{e(\T_\omega)}{w(\omega)}\right)= \omega_*\left((-)\cap \frac{\cD_r}{r!}\right):\mathrm{H}_\bullet(\cP^{\mathrm{s}}_{r,d}(c))\lra \mathrm{H}_\bullet(\cM^{\mathrm{ss}}_{r,d}).\eeq

\subsection{Generalized intersection pairings on partial desingularizations}\label{S3.1}
By \cite{Newstead, mumford-fogarty-kirwan-1994}, $\cM^{\mathrm{ss}}_{r,d}$ is the quotient stack 
\beq\label{22} \cM^{\mathrm{ss}}_{r,d}=[Q_{r,d}^{\mathrm{ss}}/G]\eeq
of the semistable part of a smooth projective scheme $Q_{r,d}$ 
by a linearized action of a reductive group $G$.  
In \cite{kirwan-1985-partial}, Kirwan introduced an algorithm to resolve stacky points with infinite stabilizers in $[Q^{\mathrm{ss}}_{r,d}/G]$ by a sequence of smooth blowups. 
It proceeds as follows. We first find the locus of closed points in $Q^{\mathrm{ss}}_{r,d}$ whose stabilizer group has maximal dimension. It turns out to be a smooth $G$-invariant closed subvariety of $Q^{\mathrm{ss}}_{r,d}$. 
We blow up along this subvariety and then remove unstable points with respect to a linearization which is close to the pullback of that for $Q_{r,d}$. Kirwan proved that the maximal dimension of stabilizer groups in the semistable part of the blown-up space is strictly smaller than before. Therefore by blowing up and taking out the unstable points finitely many times, we end up with a smooth quasi-projective variety $\widetilde{Q}^{s}_{r,d}$ with only finite stabilizer groups and a morphism
\beq\label{e43}
\rho:{\widetilde{\cM}_{r,d}}^{\mathrm{ss}}=[\widetilde{Q}^{s}_{r,d}/G]\lra [Q_{r,d}^{\mathrm{ss}}/G]= \cM^{\mathrm{ss}}_{r,d}\eeq
where $\widetilde{\cM}_{r,d}^{\mathrm{ss}}$ is \DM with proper good moduli space $\widetilde{Q}_{r,d} \git G$. 
By pulling back a cohomology class by \cref{e43}, we thus have a generalized intersection pairing
\beq\label{e42}
\mathrm{H}^\bullet (\cM^{\mathrm{ss}}_{r,d})\lra \QQ \ ,\qquad \xi \longmapsto \langle \xi\rangle_{\mathrm{JKKW}_{r,d}}:=
\int_{\widetilde{\cM}_{r,d}^{\mathrm{ss}}} \xi|_{\widetilde{\cM}_{r,d}^{\mathrm{ss}}}
\eeq
which was computed in \cite{jeffrey-kiem-kirwan-woolf-2006-intersection}. For convenience, let us call it the \emph{JKKW pairing} and the associated homology class
\beq\label{e52}
\mathrm{JKKW}_{r,d}=\rho_*[{\widetilde{\cM}_{r,d}}^{\mathrm{ss}}]\in \mathrm{H}_\bullet (\cM^{\mathrm{ss}}_{r,d})\eeq 
will be called the \emph{JKKW class}.

Because we know \cref{17} by the residue formulas in \cite[Proposition 8.4]{jeffrey-kirwan-1998-curves}, 
the computation of \cref{e42} is attained by computing the differnce 
\beq\label{e44}
\int_{\widetilde{\cM}_{r,d}^{\mathrm{ss}}} \xi|_{\widetilde{\cM}_{r,d}^{\mathrm{ss}}} - \int_{\cP^{\mathrm{s}}_{r,d}(c)} \frac{\cD_r}{r!}\cup \xi|_{\cP^{\mathrm{s}}_{r,d}(c)}
\eeq
of \cref{e42} and \cref{17}. In the subsequent subsection, we recall the computation of  \cref{e44}  from \cite{jeffrey-kiem-kirwan-woolf-2006-intersection}.

\subsection{Comparison of the Jeffrey--Kirwan and JKKW pairings}

Let $\tmu:\widetilde{M}^{\mathrm{ext}}\to \bk^*$ denote the induced moment map on the blown-up space $\widetilde{M}^{\mathrm{ext}}$ obtained by the partial desingularization process applied to $\modext$.  Let $\widetilde{\cP}^{\mathrm{s}}_{r,d}(0)=\tmu^{-1}(0)/T$. 
By the smooth fibration
$$\omega:\widetilde{\cP}^{\mathrm{s}}_{r,d}(0)=\tmu^{-1}(0)/T\lra \tmu^{-1}(0)/K=\widetilde{\cM}^{\mathrm{ss}}_{r,d}$$
of \DM stacks (cf. \cite[(6.1)]{JKKW2}), we have
\beq\label{12} \int_{\tcM_{r,d}^{\mathrm{ss}}}\xi|_{\tcM_{r,d}^{\mathrm{ss}}}=
\int_{\tmu^{-1}(0)/ K}\xi|_{\tmu^{-1}(0)/ K}=\int_{\tmu^{-1}(0)/T} \frac{\cD_r}{r!}\cup \xi|_{\tmu^{-1}(0)/T}.\eeq
Equivalently, letting $\widetilde{P}_{r,d}(0)=[\widetilde{\cP}^{\mathrm{s}}_{r,d}(0)]\in \mathrm{H}_\bullet(\widetilde{\cP}^{\mathrm{s}}_{r,d}(0))$, we have
\beq\label{e54}
\Omega (\widetilde{P}_{r,d}(0))=[\widetilde{\cM}^{\mathrm{ss}}_{r,d}] \and \rho_*\Omega (\widetilde{P}_{r,d}(0))=\rho_*[\widetilde{\cM}^{\mathrm{ss}}_{r,d}]=\mathrm{JKKW}_{r,d} \eeq
where $\Omega$ denotes the pushforward by $\omega$ after capping with $\cD_r/r!$ as \cref{e53}.

Moreover, since the blowups modify the moment map only near 0 (cf. \cite[(6.2)]{JKKW2}), we have
\beq\label{13}
\int_{\tmu^{-1}(c)/T} \frac{\cD_r}{r!}\cup \xi|_{\tmu^{-1}(c)/T}
=\int_{\mu^{-1}(c)/T} \frac{\cD_r}{r!}\cup \xi|_{\mu^{-1}(c)/T}
\eeq
for a general $c$ in the positive Weyl chamber of $K$. 
Equivalently, we have $$\widetilde{P}_{r,d}(c)=P_{r,d}(c)\and \Omega(\widetilde{P}_{r,d}(c))=\Omega(P_{r,d}(c))=\mathrm{JK}_{r,d}(c).$$

Combining \cref{12} and \cref{13}, we find that the difference \cref{e44} of \cref{e42} and \cref{17} equals 
\beq\label{15}
\int_{\tmu^{-1}(0)/T} \frac{\cD_r}{r!}\cup \xi|_{\tmu^{-1}(0)/T}-\int_{\tmu^{-1}(c)/T} \frac{\cD_r}{r!}\cup \xi|_{\tmu^{-1}(c)/T}\eeq
which amounts to computing the wall crossing 
$\widetilde{P}_{r,d}(0) -  \widetilde{P}_{r,d}(c)$
for $\xi\cap \cD_r/r!$ with $\xi\in \mathrm{H}_\bullet(\cM^{\mathrm{ss}}_{r,d})$.

To compute \cref{15},
we need to find out the new walls between $0$ and $c$ that arise from the blowups and to compute the wall crossing term for each of the new wall. 
In \cite[\S3]{jeffrey-kiem-kirwan-woolf-2006-intersection}, it was shown that the new walls to be crossed are parameterized by certain directed graphs and the wall crossing terms were computed in \cite[\S7]{jeffrey-kiem-kirwan-woolf-2006-intersection}. 

See \cite{jeffrey-kiem-kirwan-woolf-2006-intersection} for more details and \S\ref{S8.2} for explicit computations in the rank 2 case. The above comparison may be summarized by the following diagram
\begin{equation*}
    \begin{tikzcd}[column sep={5em, between origins}, row sep={4em, between origins}]
        \widetilde{P}_{r, d} (0)
        \ar[rr, leftrightarrow, "\text{wall-crossing}"]
        \ar[d, mapsto, "\Omega"']
        && \widetilde{P}_{r, d} (c)
        \ar[d, mapsto, "="']
         \\
       {[\widetilde{\cM}^{\mathrm{ss}}_{r,d}]} 
        \ar[d, mapsto, "\rho_*"']
        && P_{r, d} (c)
        \ar[d, mapsto, "\Omega"']
        \\
        \mathrm{JKKW}_{r, d} 
        \ar[rr, dotted, dash]
        && \mathrm{JK}_{r, d}(c) \rlap{.}
    \end{tikzcd}
\end{equation*}

\section{Generalized intersection pairings by stable pairs and the Joyce class}

\label{sec-joyce}

This section provides background material on
Joyce's work \cite{joyce-wall-crossing}
defining enumerative invariants in abelian categories,
as homology classes of moduli stacks,
which we call the \emph{Joyce class}.
These classes satisfy wall-crossing formulae
expressed using the Joyce vertex algebra.

We also explain how our setting of \cref{sec-moduli},
especially the setting of parabolic bundles and triples,
fits into Joyce's framework,
so that Joyce's wall-crossing formulae
can be applied to parabolic bundles and triples.
These will be useful for comparing
the Joyce class for vector bundles on curves
with Jeffrey--Kirwan pairings later on.

\subsection{The Joyce class}

\subsubsection{Joyce vertex algebras}
\label{subsubsec-jva}

Joyce vertex algebras
(\cite{joyce-hall,joyce-wall-crossing,gross-joyce-tanaka-2022};
see also \cite{bu-vertex})
are vertex algebra structures defined on
the homology of moduli stacks of objects in linear categories.
These vertex algebra structures are used to
express wall-crossing relations
for enumerative invariants living in the homology of moduli stacks,
which is the main motivation for constructing such vertex algebras.

Given a $\mathbb{C}$-linear category~$\mathcal{A}$,
and a quasi-smooth moduli stack~$\mathcal{X}$ of objects in~$\mathcal{A}$,
under certain conditions, the homology
\[
    V = \mathrm{H}_{\bullet + 2 \operatorname{vdim}} (\mathcal{X}; \mathbb{Q})
\]
admits a vertex algebra structure,
called the \emph{Joyce vertex algebra},
where $\operatorname{vdim}$ denotes the virtual dimension of~$\mathcal{X}$,
seen as a locally constant function on~$\mathcal{X}$.
The vertex product is roughly given by
\[
    a_1 (z_1) \cdot a_2 (z_2) =
    \oplus_* \circ \mathrm{e}^{z_1 D_1 + z_2 D_2}
    \Bigl(
        (a_1 \boxtimes a_2) \cap e_z (\mathbb{T}_\oplus)
    \Bigr) \ ,
\]
where $a_1, a_2 \in V$ are homology classes,
$z_1, z_2$ are formal variables,
$\oplus \colon \mathcal{X}^2 \to \mathcal{X}$
is the direct sum map,
$\mathbb{T}_\oplus$ is its relative tangent complex,
and $e_z (\mathbb{T}_\oplus)$
is the $(\mathbb{C}^\times)^2$-equivariant Euler class of $\mathbb{T}_\oplus$,
with $z = (z_1, z_2)$ the equivariant parameters.
The operators $D_1, D_2 \colon V^{\otimes 2} \to V^{\otimes 2}$
are given by $D \otimes 1$ and $1 \otimes D$, respectively,
where $D \colon V \to V$ is the \emph{translation operator},
defined from the action
$\odot \colon [* / \mathbb{C}^\times] \times \mathcal{X} \to \mathcal{X}$
by taking the generator in
$\mathrm{H}_2 ([* / \mathbb{C}^\times]; \mathbb{Q})$
dual to the universal first Chern class in
$\mathrm{H}^2 ([* / \mathbb{C}^\times]; \mathbb{Q})$.
See \cite[Theorem~2.3.3]{bu-vertex} for a precise statement.

It is a general fact that given a vertex algebra~$V$,
the quotient
\[
    \bar{V} = V / D (V)
\]
admits a Lie algebra structure,
with Lie bracket
\[
    [a_1, a_2] = \mathrm{res}_{z_1 = z_2}
    \bigl( a_1 (z_1) \cdot a_2 (z_2) \bigr) \ .
\]
When $V = \mathrm{H}_{\bullet + 2 \operatorname{vdim}} (\mathcal{X}; \mathbb{Q})$
as above,
we often have a natural identification
\[
    \bar{V} \simeq
    \mathrm{H}_{\bullet + 2 \operatorname{vdim}} (\mathcal{X}^{\mathrm{rig}}; \mathbb{Q}) \ ,
\]
where $\mathcal{X}^{\mathrm{rig}}$ is the
$\mathbb{C}^\times$-rigidification of~$\mathcal{X}$.
See \cite[Theorem~4.8]{joyce-wall-crossing}.
Note that the virtual dimension of~$\mathcal{X}^{\mathrm{rig}}$
differs from that of~$\mathcal{X}$ by~$1$,
but as the variables~$z_i$ have degree~$-2$,
the residue operator also has degree~$-2$,
which cancels with this grading shift.
\footnote{
    To make the grading consistent,
    we use the unusual convention that
    the virtual dimension of the point
    $\{ 0 \} \subset \mathcal{X}^{\mathrm{rig}}$ is~$1$.
    But this is not important,
    as we never use this part of the Lie algebra in this paper.
}

\subsubsection{The Joyce class}
\label{subsubsec-joyce-inv}

Under the setting above,
under certain assumptions,
Joyce \cite[Theorem~5.7]{joyce-wall-crossing}
defines homological enumerative invariants,
which we call the \emph{Joyce class} and denote by
\[
    J_\alpha (\tau) \in
    \mathrm{H}_{2 \operatorname{vdim}} (\mathcal{X}^{\mathrm{ss}}_\alpha (\tau); \mathbb{Q}) \ ,
\]
where $\alpha \in \uppi_0 (\mathcal{X}) \setminus \{ 0 \}$
is a class corresponding to a connected component
$\mathcal{X}_\alpha \subset \mathcal{X}$,
$\tau$ is a stability condition,
and $\mathcal{X}^{\mathrm{ss}}_\alpha (\tau) \subset
\mathcal{X}^{\smash{\mathrm{rig}}}_\alpha$
is the $\tau$-semistable locus.

For example, in the setting of parabolic bundles,
as in \cref{sec-moduli},
$\alpha$ is a class $\alpha = (r, d, f)$,
and $\tau$ can be taken to be a slope function of the form
in \cref{def-par-weights}.
The settings of vector bundles, pairs, and parabolic triples, are similar.
We will verify in \cref{subsec-joyce-axioms}
that these all satisfy Joyce's assumptions.

The Joyce class satisfies the following properties:

\begin{itemize}
    \item
        If all semistable objects of class
        $\alpha$ are stable,
        so that the semistable locus
        $\mathcal{X}^{\mathrm{ss}}_\alpha \subset \mathcal{X}^{\mathrm{rig}}_\alpha$
        is a smooth (or quasi-smooth) proper algebraic space,
        we have
        \[
            J_\alpha (\tau) =
            [\mathcal{X}^{\mathrm{ss}}_\alpha (\tau)]^{(\mathrm{vir})} \ ,
        \]
        the (virtual) fundamental class of~$\mathcal{X}^{\mathrm{ss}}_\alpha (\tau)$.
    \item
        For two stability conditions~$\tau$ and~$\tau'$,
        the classes
        $J_\alpha (\tau)$ and $J_\alpha (\tau')$
        are related by the \emph{wall-crossing formula}
        \[
            J_\alpha (\tau') =
            \sum_{\alpha = \alpha_1 + \cdots + \alpha_n} {}
            \widetilde{U} (\alpha_1, \dotsc, \alpha_n; \tau, \tau') \cdot
            [ [ \dotsc [ J_{\alpha_1} (\tau),
            J_{\alpha_2} (\tau) ], \dotsc ] ,
            J_{\alpha_n} (\tau) ] \ ,
        \]
        where the sum is over all decompositions of~$\alpha$
        into non-zero classes~$\alpha_1, \dotsc, \alpha_n$,
        and $\widetilde{U} ({\cdots})$ are combinatorial coefficients
        defined in \cref{def-wcf-coeff} below.
        All classes in the formula are pushed forward from the semistable locus
        to $\mathcal{X}^{\mathrm{rig}}_\alpha$
        or $\mathcal{X}^{\mathrm{rig}}_{\alpha_i}$,
        and the Lie brackets $[-, -]$ are defined using
        the Joyce vertex algebra, as in \cref{subsubsec-jva}.
\end{itemize}
In slightly more detail,
the class $J_\alpha (\tau)$ is defined as follows.

Consider an auxiliary category~$\mathcal{A}'$
of \emph{pairs} in~$\mathcal{A}$,
with a moduli stack of objects~$\mathcal{X}'$.
For example, if~$\mathcal{A}$ is the category of vector bundles
(or parabolic bundles) on a curve,
as in \cref{sec-moduli},
then $\mathcal{A}'$ can be taken to be the category of
pairs (or parabolic triples).
For a stability condition~$\tau$ on~$\mathcal{A}$,
there is an induced stability condition $\tau^+$ on~$\mathcal{A}'$,
and for the class $(\alpha, 1)$ of pairs,
the virtual class
$J'_{\alpha, 1} (\tau^+) =
[\mathcal{X}'^{\mathrm{ss}}_{\alpha, 1} (\tau^+)]^\mathrm{vir}$
exists. The class $J_\alpha (\tau)$ is then given by
\begin{equation}
    \label{eq-def-joyce-inv}
    J_\alpha (\tau) =
    \pi_* \biggl(
        J'_{\alpha, 1} (\tau^+) \cap
        \frac{e (\mathbb{T}_\pi)}{w (\pi)}
    \biggr)
    +
    \hspace{-.5em}
    \sum_{\substack{
        \alpha = \alpha_1 + \cdots + \alpha_n \colon \\
        n > 1, \ \tau (\alpha_i) = \tau (\alpha)
    }}
    \hspace{-.5em}
    \frac{(-1)^n}{n!} \cdot
    \frac{r (\alpha_1)}{r (\alpha)} \cdot
    [ [ \dotsc [ J_{\alpha_1} (\tau),
    J_{\alpha_2} (\tau) ], \dotsc ] ,
    J_{\alpha_n} (\tau) ] \ ,
\end{equation}
where
$\pi \colon \mathcal{X}'^{\mathrm{ss}}_{\alpha, 1} (\tau^+)
\to \mathcal{X}^{\mathrm{ss}}_\alpha (\tau)$
is the forgetful morphism,
$\mathbb{T}_\pi$ is its relative tangent complex,
which is a vector bundle in this case,
and $e (\mathbb{T}_\pi)$ is its Euler class;
$w (\pi)$ is the Euler characteristic of the generic fibre of~$\pi$,
which is a projective space;
$r (-)$ is a function proportional to
the dimension of the vector space of pairs on a given object;
in the case of vector bundles and parabolic bundles on a curve,
$r (-)$ can be taken to be the rank of the vector bundle.
The Lie brackets are defined from the Joyce vertex algebra
for semistable objects of slope~$\tau (\alpha)$ in~$\mathcal{A}$.
The definition is by induction on~$r (\alpha)$.
See, again, \cite[Theorem~5.7]{joyce-wall-crossing} for details.

In particular,
$J_\alpha (\tau)$ defines a generalized intersection pairing on~$\mathcal{X}^{\mathrm{ss}}_\alpha (\tau)$
in the sense of \cref{51},
as $\mathcal{X}'^{\mathrm{ss}}_{\alpha, 1} (\tau^+)$
restricts to a projective bundle over the stable locus
$\mathcal{X}^{\mathrm{s}}_\alpha (\tau) \subset \mathcal{X}^{\mathrm{ss}}_\alpha (\tau)$,
and all Lie brackets in \cref{eq-def-joyce-inv}
lie in the image of $\oplus_*$,
so they restrict to zero in
$\mathrm{H}^{\smash{\mathrm{BM}}}_\bullet (\mathcal{X}^{\mathrm{s}}_\alpha (\tau))$,
because the image of $\oplus$ is disjoint from~$\mathcal{X}^{\mathrm{s}}_\alpha (\tau)$.

\begin{definition}
    \label{def-inv-notations}
    Under the setting of \cref{sec-moduli},
    let~$c$ be a parabolic weight for the type $(r, d)$ that is close to~$0$.
    We denote by
    \begin{alignat*}{2}
        J_{r, d}
        & \in \mathrm{H}_{2 \dim}
        (\mathcal{M}^{\smash{\mathrm{ss}}}_{r, d}; \mathbb{Q}) \ ,
        & \qquad
        P_{r, d, f} (c)
        & \in \mathrm{H}_{2 \dim}
        (\mathcal{P}^{\smash{\mathrm{ss}}}_{r, d, f} (0); \mathbb{Q}) \ ,
        \\
        J'_{r, d, v}
        & \in \mathrm{H}_{2 \operatorname{vdim}}
        (\mathcal{M}'^{\smash{\mathrm{ss}}}_{r, d, v}; \mathbb{Q}) \ ,
        & \qquad
        P'_{r, d, f, v} (c)
        & \in \mathrm{H}_{2 \operatorname{vdim}}
        (\mathcal{P}'^{\smash{\mathrm{ss}}}_{r, d, f, v} (0); \mathbb{Q})
    \end{alignat*}
    the Joyce classes for vector bundles,
    parabolic bundles,
    pairs, and parabolic triples, respectively,
    where we use the stability conditions given by the slope functions
    $\mu = d/r$, $\mu_c$, $\mu^+$, and~$\mu_c^+$,
    respectively, where the latter three are defined in
    \cref{def-par-weights,def-triple-slope}.
    In the case of $P_{r, d, f} (c)$ and $P'_{r, d, f, v} (c)$,
    for convenience, we push them forward along the inclusions
    $\mathcal{P}^{\smash{\mathrm{ss}}}_{r, d, f} (c)
    \hookrightarrow
    \mathcal{P}^{\smash{\mathrm{ss}}}_{r, d, f} (0)$
    and
    $\mathcal{P}'^{\smash{\mathrm{ss}}}_{r, d, f, v} (c)
    \hookrightarrow
    \mathcal{P}'^{\smash{\mathrm{ss}}}_{r, d, f, v} (0)$.

    As before, when $r = \ell$ and $f = 1^r$,
    we omit~$f$ from the notation, and write
    \[
        P_{r, d} (c) = P_{r, d, 1^r} (c) \ ,
        \qquad
        P'_{r, d, v} (c) = P'_{r, d, \smash{1^r}, v} (c) \ .
    \]
    For reference, we record that the (virtual) dimensions used above are given by the formula
    \begin{equation*}
        r^2 (g-1)
        \quad + \quad
        v (r (N+g-1) + d - v)
        \quad + \quad
        \sum_{1 \leq i < j \leq l} f_i f_j
        \quad + \quad
        1 \ ,
    \end{equation*}
    where we omit the second (resp.~third) term
    when the type does not involve~$v$ (resp.~$f$);
    the final term $+ 1$ comes from rigidification.
    The homological degrees of Joyce classes are
    twice the number given by this formula.
\end{definition}

Finally, we present the precise definition of
the combinatorial coefficients that appear
in the wall-crossing formula,
following \cite[\S4.1]{joyce-2008-configurations-iv}.

\begin{definition}
    \label{def-wcf-coeff}
    \allowdisplaybreaks
    Fix a class $\alpha$
    and a decomposition $\alpha = \alpha_1 + \cdots + \alpha_n$
    as above.

    Following Joyce,
    for two stability conditions~$\tau$ and~$\tau'$,
    define coefficients
    \begin{align*}
        S (\alpha_1, \dotsc, \alpha_n; \tau, \tau') & =
        \prod_{i=1}^{n-1} {} \left\{
            \mathrlap{ \begin{array}{ll}
                1, & \tau (\alpha_i) > \tau (\alpha_{i+1}) \text{ and } \\
                & \hspace{2em} \tau' (\alpha_1 + \cdots + \alpha_i)
                \leq \tau' (\alpha_{i+1} + \cdots + \alpha_n) \\
                -1, & \tau (\alpha_i) \leq \tau (\alpha_{i+1}) \text{ and } \\
                    & \hspace{2em} \tau' (\alpha_1 + \cdots + \alpha_i)
                    > \tau' (\alpha_{i+1} + \cdots + \alpha_n) \\
                0, & \text{otherwise}
            \end{array} }
            \hspace{21em}
        \right\} \ ,
        \\[1ex]
        U (\alpha_1, \dotsc, \alpha_n; \tau, \tau')
        & =
        \notag \\*[-.5ex]
        & \hspace{-7em}
        \sum_{ \leftsubstack[8em]{
            \\[-2ex]
            & 0 = a_0 < \cdots < a_m = n, \ 
            0 = b_0 < \cdots < b_\ell = m \colon
            \\[-1ex]
            & \text{Writing }
            \beta_i = \alpha_{a_{i-1}+1} + \cdots + \alpha_{a_i}
            \text{ for }
            i = 1, \dotsc, m, \\[-1ex]
            & \text{and }
            \gamma_i = \beta_{b_{i-1}+1} + \cdots + \beta_{b_i}
            \text{ for }
            i = 1, \dotsc, \ell, \\[-1ex]
            & \text{we have } \tau (\alpha_j) = \tau (\beta_i)
            \text{ for all } a_{i-1} < j \leq a_i , \\[-1ex]
            & \text{and } \tau' (\gamma_i) = \tau' (\alpha)
            \text{ for all } i = 1, \dotsc, \ell
        } } {}
        \frac{(-1)^{\ell-1}}{\ell} \cdot \biggl(
            \prod_{i=1}^{\ell}
            S (\beta_{b_{i-1}+1}, \dotsc, \beta_{b_i}; \tau, \tau')
        \biggr) \cdot
        \biggl(
            \prod_{i=1}^m \frac{1}{(a_i - a_{i-1})!}
        \biggr) \ ,
        \raisetag{4ex}
    \end{align*}
    and define coefficients
    $\widetilde{U} (\alpha_1, \dotsc, \alpha_n; \tau, \tau') \in \mathbb{Q}$,
    non-uniquely,
    as in \cite[Theorem~5.4]{joyce-2008-configurations-iv},
    by the property that
    \begin{multline*}
        \sum_{\sigma \in \mathfrak{S}_n}
        U (\alpha_{\sigma (1)}, \dotsc, \alpha_{\sigma (n)}; \tau, \tau') \cdot
        e_{\sigma (1)} * \cdots * e_{\sigma (n)}
        = \\*
        \sum_{\sigma \in \mathfrak{S}_n}
        \widetilde{U} (\alpha_{\sigma (1)}, \dotsc, \alpha_{\sigma (n)}; \tau, \tau') \cdot
        [[ \dotsc [ e_{\sigma (1)} , e_{\sigma (2)} ] , \dotsc ] , e_{\sigma (n)} ] \ ,
    \end{multline*}
    where the $e_i$ are formal symbols,
    $*$ is an associative product,
    and $[-, -]$ denotes the commutator for~$*$.
\end{definition}

\subsection{The Joyce class for vector bundles on curves}
\label{subsec-vb}

The Joyce class $J_{r, d}$ for vector bundles on curves
is computed explicitly by
the first author \cite{bu-2023-curves}.
When $r > 0$, it satisfies a \emph{regularized sum} formula
\begin{equation}
    J_{r, d} =
    \frac{1}{r} \cdot
    \sumbar_{\leftsubstack{
        & d = d_1 + \cdots + d_r \colon \\[-1ex]
        & \frac{d_1 + \cdots + d_i}{i} \leq \frac{d}{r}, \
        i = 1, \dotsc, r
    }}
    \frac{1}{\#\text{equal signs}} \cdot
    [ [ \dotsc [ J_{1, d_1},
    J_{1, d_2} ], \dotsc ] ,
    J_{1, d_r} ]
\end{equation}
in $\mathrm{H}_{2 \dim} (\mathcal{M}^{\smash{\mathrm{rig}}}_{r, d}; \mathbb{Q})$,
where we sum over all $(d_1, \dotsc, d_r) \in \mathbb{Z}^r$
satisfying the given conditions,
and `\#equal signs' refers to the number of equal signs
attained among the $r$~inequalities, which is at least~$1$ (when $i = r$).
The symbol `$\sumbar$' indicates that we are taking a regularized sum,
as defined in \cite{bu-2023-curves};
it is an infinite sum, usually divergent,
and is regularized by declaring, for example,
$1 + 2 + 3 + \cdots = -1/12$, etc.

From this formula, one can extract explicit pairings
of cohomology classes on
$\mathcal{M}^{\smash{\mathrm{ss}}}_{r, d}$
against the class $J_{r, d}$,
as iterated residues of meromorphic functions.
See \cite[Theorem~5.8]{bu-2023-curves} for the precise formula.
In particular, this formula computes intersection pairings
on~$\mathcal{M}^{\smash{\mathrm{ss}}}_{r, d}$
when $r, d$ are coprime.

\subsection{Verifying Joyce's axioms for triples}
\label{subsec-joyce-axioms}

We verify that Joyce's axioms,
\cite[Assumptions 5.1--5.3]{joyce-wall-crossing},
are satisfied for our setting of parabolic bundles and triples
introduced in \cref{sec-moduli}.
Note that as the category of parabolic bundles
embeds in the category of triples,
it is sufficient to verify these axioms for triples.

We first construct an abelian category~$\mathcal{A}$
that contains the category of triples.

\begin{definition}
    Let $C, p, \ell$ be as in \cref{sec-moduli}.
    Fix a slope $k \in \mathbb{Q}$
    and an integer $N > 2g - 2 - k$.

    Define an abelian category~$\mathcal{A}$ as follows.
    An object is a triple $(E, F, V)$, where
    \begin{itemize}
        \item
            $E$ is a semistable vector bundle on~$C$ of slope~$k$, and
        \item
            $F = (0 = F_0 \to F_1 \to \cdots \to F_\ell = E |_p)$
            is a sequence of linear maps of
            finite-dimensional $\mathbb{C}$-vector spaces.
        \item
            $V$ is a finite-dimensional $\mathbb{C}$-vector space,
            equipped with a map $V \to \mathrm{H}^0 (C, E (N))$.
    \end{itemize}
    A morphism $(E, F, V) \to (E', F', V')$
    is a morphism of sheaves
    $\varphi \colon E \to E'$, together with linear maps
    $F_i \to F'_i$ and $V \to V'$,
    making the relevant diagrams commute.

    Let $\mathcal{B} \subset \mathcal{A}$
    be the full subcategory of triples
    in the sense of \cref{def-triples},
    that is, $(E, F, V)$ such that
    all maps $F_i \to F_{i+1}$ and $V \to \mathrm{H}^0 (C, E (N))$ are injective.
\end{definition}

We use this choice of $\mathcal{A}, \mathcal{B}$ in
\cite[Assumption~5.1]{joyce-wall-crossing}.
We now go through the other non-obvious parts of
\cite[Assumption~5.1]{joyce-wall-crossing}:

\begin{itemize}
    \item
        For (b),
        we take $K (\mathcal{A}) = \mathbb{Z}^{\ell + 2}$,
        where we assign the type
        $(r, f, v)$ to an object $(E, F, V)$, where
        $r = \operatorname{rank} E$,
        $f = (f_1, \dotsc, f_\ell)$ with
        $f_i = \dim F_i - \dim F_{i-1}$,
        and $v = \dim V$.

    \item
        (c) asks for a moduli stack of objects of~$\mathcal{B}$,
        which we constructed in \cref{def-triples}.

    \item
        (d) asks for a derived structure on the moduli stack.
        As the stack is smooth,
        we use the trivial derived structure.

    \item
        (g) asks for a moduli stack of pairs in~$\mathcal{B}$.
        We can take $K = \{ k \}$ to be a singleton,
        $\mathcal{B}_k = \mathcal{B}$,
        $F_k (E, F, V) = \mathrm{H}^0 (C, E (N))$,
        and define the moduli stack to be a fibre product over~$\mathcal{M}$
        of the stack of triples defined above and the stack of pairs defined in
        \cite[Definition~8.4]{joyce-wall-crossing}.
\end{itemize}
It now remains to verify
\cite[Assumptions 5.2--5.3]{joyce-wall-crossing},
which asks for a family of
(weak) stability conditions with nice properties.

We use the slope functions $\mu_c^\varepsilon$
in \cref{def-par-weights}.
Verifying most of these properties are elementary;
the less obvious ones are:
\begin{itemize}
    \item
        Assumption 5.2 (b):
        Openness of the semistable locus.
    \item
        Assumption 5.2 (g--h):
        Properness of the semistable locus when
        $\text{stable} = \text{semistable}$,
        and the same property for certain auxiliary moduli stacks.
\end{itemize}
But all moduli problems considered here
can be seen as GIT quotients
with suitable stability conditions,
and these properties follow.

\section{Comparing generalized intersection pairings for vector bundles}\label{S:comparison}

\label{sec-comparison}

The main goal of this section is to compare the following
generalized intersection pairings on moduli spaces of vector bundles over a curve:
\begin{itemize}
    \item
        the Jeffrey--Kirwan class,
        obtained via parabolic bundles,
        as in \cref{sec-par}, and
    \item
        the Joyce class for vector bundles on curves,
        as in \cref{sec-joyce}.
\end{itemize}
The main tool for this comparison is Joyce's wall-crossing formula,
discussed in \cref{subsubsec-joyce-inv}.

\subsection{Projection homomorphisms}

We introduce two homomorphisms
between Joyce vertex algebras,
which will be main tools for the comparison later on.

\subsubsection{The map \texorpdfstring{$\Pi$}{Π}}
\label{sec-map-pi}

We adopt the notation from \cref{sec-moduli},
and assume that $N > 2g - 2 - d/r$,
so that the moduli stack of semistable parabolic triples
$\mathcal{P}'_{r, d, f, v}$ is smooth over
the semistable locus~$\mathcal{M}^\mathrm{ss}_{r, d}$.
Let
\[
    \pi \colon \mathcal{P}'_{r, d, f, v} |_{\mathcal{M}^\mathrm{ss}_{r, d}}
    \longrightarrow \mathcal{P}_{r, d, f} |_{\mathcal{M}^\mathrm{ss}_{r, d}}
\]
be the forgetful morphism,
which is a Grassmannian bundle
with fibre $\mathrm{Gr}_v (\mathbb{C}^{r (N - g + 1) + d})$.
When $v = 1$, it is a projective bundle with fibre
$\mathbb{P}^{r (N - g + 1) + d - 1}$.

Define the map
\begin{multline*}
    \Pi =
    \binom{r (N - g + 1) + d}{v}^{-1} \cdot
    \pi_* \bigl(
        (-) \cap e (\mathbb{T}_\pi)
    \bigr) \colon
    \\
    \mathrm{H}_{\bullet + 2 \dim} (\mathcal{P}'_{r, d, f, v} |_{\mathcal{M}^\mathrm{ss}_{r, d}}) \longrightarrow
    \mathrm{H}_{\bullet + 2 \dim} (\mathcal{P}_{r, d, f} |_{\mathcal{M}^\mathrm{ss}_{r, d}}) \ ,
\end{multline*}
where $e (-)$ denotes the Euler class.
Here, the binomial coefficient is the Euler characteristic
of the Grassmannian;
we will only be using the cases when $v = 0, 1$.

By
\textcite[\S 2.5]{gross-joyce-tanaka-2022},
stated more concisely in
\textcite[\S 2.3.5]{bu-vertex},
the map
\[
    \pi_* \bigl( (-) \cap e (\mathbb{T}_\pi) \bigr)
\]
is a vertex algebra homomorphism.
More precisely, for a fixed slope
$k \in \mathbb{Q}$,
we have a homomorphism between Joyce vertex algebras
\[
    \pi_* \bigl( (-) \cap e (\mathbb{T}_\pi) \bigr) \colon
    \bigoplus_{\substack{(r, d, f, v): \\ d = k r}}
    \mathrm{H}_{\bullet + 2 \dim} (\mathcal{P}'_{r, d, f, v} |_{\mathcal{M}^\mathrm{ss}_{r, d}})
    \longrightarrow
    \bigoplus_{\substack{(r, d, f): \\ d = k r}}
    \mathrm{H}_{\bullet + 2 \dim} (\mathcal{P}_{r, d, f} |_{\mathcal{M}^\mathrm{ss}_{r, d}}) \ .
\]
In particular,
$\Pi$ is also a homomorphism in this sense up to a factor,
and it descends to a map
$\mathrm{H}_{\bullet + 2 \dim} (\mathcal{P}'^{\smash{\mathrm{rig}}}_{r, d, f, v} |_{\mathcal{M}^\mathrm{ss}_{r, d}}) \to
\mathrm{H}_{\bullet + 2 \dim} (\mathcal{P}^{\smash{\mathrm{rig}}}_{r, d, f} |_{\mathcal{M}^\mathrm{ss}_{r, d}})$,
which is a homomorphism of Lie algebras
(for a fixed slope~$k$) up to a factor.

For a generic parabolic weight~$c$ close to~$0$, the induced morphism
$\pi \colon \mathcal{P}'^{\smash{\mathrm{ss}}}_{r, d} (c)
\to \mathcal{P}^{\smash{\mathrm{ss}}}_{r, d} (c)$
is a projective bundle over a smooth projective variety,
and we have the relation
\begin{equation}
    \label{eq-pi-triple-generic}
    \Pi (P'_{r, d, 1} (c)) =
    P_{r, d} (c)
\end{equation}
in $\mathrm{H}_{2 \dim} (\mathcal{P}^{\smash{\mathrm{ss}}}_{r, d} (c))$,
where we use the map
$\Pi \colon \mathrm{H}_{\bullet + 2 \dim} (\mathcal{P}'^{\smash{\mathrm{ss}}}_{r, d, 1} (c))
\to \mathrm{H}_{\bullet + 2 \dim} (\mathcal{P}^{\smash{\mathrm{ss}}}_{r, d} (c))$
defined analogously.

On the other hand, by the definition of the Joyce class,
writing $r_0 = r / \mathrm{gcd} (r, d)$, we have
\begin{align}
    \label{eq-pi-pair}
    \Pi (J'_{r, d, 1})
    & =
    \sum_{\substack{
        r = r_1 + \cdots + r_n: \\
        r_0 \, | \, r_i
    }} {}
    \frac{(-1)^{n-1}}{n!} \cdot \frac{r_1}{r} \cdot
    [ [ \ldots [
        J_{r_1, d_1}, J_{r_2, d_2}
        ], \dotsc ],
        J_{r_n, d_n}
    ] \ ,
    \\
    \label{eq-pi-triple-zero}
    \Pi (P'_{r, d, f, 1} (0))
    & =
    \sum_{\substack{
        f = f_1 + \cdots + f_n: \\
        r_0 \, | \, |f_i|
    }} {}
    \frac{(-1)^{n-1}}{n!} \cdot \frac{|f_1|}{r} \cdot
    [ [ \dotsc [ P_{f_1} (0),
    P_{f_2} (0) ], \dotsc ] ,
    P_{f_n} (0) ] \ ,
\end{align}
where we assume $r_i > 0$
and write $d_i = r_i \cdot d/r$,
and we use the shorthand
$P_{f_i} (c) = P_{r_i, d_i, f_i} (c)$ with
$r_i = |f_i|$ and $d_i = |f_i| \cdot d / r$.
Again, these can be interpreted as equalities in
$\mathrm{H}_{2 \dim} (\mathcal{M}^{\smash{\mathrm{ss}}}_{r, d})$
and
$\mathrm{H}_{2 \dim} (\mathcal{P}^{\smash{\mathrm{ss}}}_{r, d, f} (0))$,
respectively.

\subsubsection{The map \texorpdfstring{$\Omega$}{Ω}}

Let
\[
    \omega \colon \mathcal{P}'_{r, d, f, v} \to \mathcal{M}'_{r, d, v}
\]
be the forgetful morphism,
which is a smooth projective fibration whose fibre is
the space of flags of type~$f$ in $\mathbb{C}^r$.

Define the map
\begin{equation}
    \label{eq-def-omega}
    \Omega =
    \binom{r}{f_1, \dotsc, f_\ell}^{-1} \cdot
    \omega_* \bigl(
        (-) \cap
        e (\mathbb{T}_\omega)
    \bigr) \colon
    \mathrm{H}_{\bullet + 2 \dim} (\mathcal{P}'_{r, d, f, v})
    \longrightarrow
    \mathrm{H}_{\bullet + 2 \dim} (\mathcal{M}'_{r, d, v}) \ ,
\end{equation}
where the multinomial coefficient is the Euler characteristic
of the fibre of~$\omega$, which is the space of flags of type~$f$.
For \emph{full flags}, that is when $f_i \in \{ 0, 1 \}$ for all~$i$,
the coefficient becomes~$r!$.

We also have analogous maps
on open substacks, such as
\begin{equation*}
    \Omega \colon \mathrm{H}_{\bullet + 2 \dim} (\mathcal{P}'_{r, d, f, v} |_{\mathcal{M}^\mathrm{ss}_{r, d}})
    \longrightarrow \mathrm{H}_{\bullet + 2 \dim} (\mathcal{M}'_{r, d, v} |_{\mathcal{M}^\mathrm{ss}_{r, d}}) \ ,
\end{equation*}
defined by the same formula,
as well as on rigidifications.

Similarly to the case of~$\Pi$, by
\textcite[\S 2.5]{gross-joyce-tanaka-2022}
and \textcite[\S 2.3.5]{bu-vertex},
the map
\[
    \omega_* \bigl( (-) \cap e (\mathbb{T}_\omega) \bigr) \colon
    \mathrm{H}_{\bullet + 2 \dim} (\mathcal{P}')
    \longrightarrow
    \mathrm{H}_{\bullet + 2 \dim} (\mathcal{M}')
\]
is a homomorphism of Joyce vertex algebras,
and so is the version for a fixed slope,
as in the case of~$\Pi$ above.
In particular, in both versions,
$\Omega$ is a vertex algebra homomorphism up to a factor,
and on rigidifications,
a Lie algebra homomorphism up to a factor.

Note that by definition, we have
\begin{equation*}
    \Omega \circ \Pi = \Pi \circ \Omega \colon \quad
    \mathrm{H}_{\bullet + 2 \dim} (\mathcal{P}'_{r, d, f, v} |_{\mathcal{M}^\mathrm{ss}_{r, d}})
    \longrightarrow
    \mathrm{H}_{\bullet + 2 \dim} (\mathcal{M}_{r, d} |_{\mathcal{M}^\mathrm{ss}_{r, d}})
    \ ,
\end{equation*}
and similarly for the rigidified version.

When $N > 2g - 2 - d/r$,
taking $f = 1^r$ and $v = 1$,
the induced morphism
$\omega \colon \mathcal{P}'^{\smash{\mathrm{ss}}}_{r, d, 1} (0)
\to \mathcal{M}'^{\smash{\mathrm{ss}}}_{r, d, 1}$
is a flag variety bundle over a smooth projective variety,
and we have
\begin{equation}
    \label{eq-omega-triple-zero}
    \Omega (P'_{r, d, 1} (0)) =
    J'_{r, d, 1}
\end{equation}
in $\mathrm{H}_{2 \dim} (\mathcal{M}'^{\smash{\mathrm{ss}}}_{r, d, 1})$,
where we use the analogous map
$\Omega \colon \mathrm{H}_{\bullet + 2 \dim} (\mathcal{P}'^{\smash{\mathrm{ss}}}_{r, d, 1} (0))
\to \mathrm{H}_{\bullet + 2 \dim} (\mathcal{M}'^{\smash{\mathrm{ss}}}_{r, d, 1})$.

The generalized intersection pairings considered in \cref{S3.2}
can be rephrased as defining the following class:

\begin{definition}
    \label{def-jkrd}
    Let~$c$ be a generic parabolic weight for the type $(r, d)$ which is close to $0$.
    We define the \emph{Jeffrey--Kirwan class}
    \[
        \mathrm{JK}_{r, d} (c) =
        \Omega (P_{r, d} (c))
        \in \mathrm{H}_{2 \dim} (\mathcal{M}^{\smash{\mathrm{ss}}}_{r, d})
        \ .
    \]
    Here, we use the map
    $\Omega \colon \mathrm{H}_{\bullet + 2 \dim} (\mathcal{P}^{\smash{\mathrm{ss}}}_{r, d} (0))
    \to \mathrm{H}_{\bullet + 2 \dim} (\mathcal{M}^{\smash{\mathrm{ss}}}_{r, d})$
    defined by the same formula as \cref{eq-def-omega}.
\end{definition}

This definition can be compared to the following result,
which will be useful for computations later on.

\begin{theorem}
    \label{thm-omega-par-zero}
    We have
    \footnote{
        We note that this formula also appears in
        \textcite[Theorem~5.7]{moreira-parabolic};
        the class $P_{r, d} (0)$ is defined differently but equivalently there,
        and our definition directly using Joyce's framework
        enables us to use a more straightforward proof.
    }
    \[
        \Omega (P_{r, d} (0))
        = J_{r, d}
    \]
    in $\mathrm{H}_{2 \dim} (\mathcal{M}^{\smash{\mathrm{ss}}}_{r, d})$.
\end{theorem}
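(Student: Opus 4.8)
The plan is to induct on the rank and apply the map~$\Omega$ to Joyce's recursive formula \cref{eq-def-joyce-inv} for the Joyce class of parabolic bundles, matching the result term by term with the same formula for the Joyce class of vector bundles.

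Fix the slope $k = d/r$. First I would prove, by induction on $r' > 0$ with $r_0 \mid r'$ and $r' \leq r$ (writing $d' = k r'$), that $\Omega(P_{r', d'}(0)) = J_{r', d'}$; the case $r' = r$ is the theorem, and when $\gcd(r', d') = 1$ the correction sum below is empty so the argument is the same but shorter. The crucial preliminary observation is that, since the parabolic weight is~$0$, a parabolic bundle $(E, F)$ is $\mu_0$-semistable precisely when $E$ is semistable, so $\mathcal{P}^{\mathrm{ss}}_{r', d'}(0)$ is the full flag bundle over $\mathcal{M}^{\mathrm{ss}}_{r', d'}$; consequently the flag-forgetting pushforward $\omega_* \bigl( (-) \cap e(\mathbb{T}_\omega) \bigr)$, which by \textcite[\S 2.5]{gross-joyce-tanaka-2022} and \textcite[\S 2.3.5]{bu-vertex} is a homomorphism of Joyce vertex, hence Lie, algebras, carries the Joyce Lie algebra of $\mu_0$-semistable parabolic bundles of slope~$k$ to that of semistable vector bundles of slope~$k$. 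Note that $\Omega$ itself is this pushforward divided by the Euler characteristic $r_i!$ of the full flag variety on the relevant component; keeping track of these scalars is what makes the combinatorics work out.

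Now I would apply~$\Omega$ to \cref{eq-def-joyce-inv} written for $\alpha = (r', d', 1^{r'})$ with stability $\mu_0$. The first term, $\pi_*\bigl(P'_{r', d', 1}(0) \cap e(\mathbb{T}_\pi)/w(\pi)\bigr)$, is handled using that the square formed by the section-forgetting morphisms~$\pi$ and the flag-forgetting morphisms~$\omega$ is Cartesian: then $\mathbb{T}_\pi$ and $w(\pi)$ are pulled back along~$\omega$, the projection formula gives $\Omega \circ \pi_* \bigl( (-) \cap e(\mathbb{T}_\pi)/w(\pi) \bigr) = \pi_* \bigl( (-) \cap e(\mathbb{T}_\pi)/w(\pi) \bigr) \circ \Omega$, and combining with $\Omega(P'_{r', d', 1}(0)) = J'_{r', d', 1}$ from \cref{eq-omega-triple-zero} produces exactly the first term of $J_{r', d'}$. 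For the iterated-bracket correction terms, indexed by decompositions $1^{r'} = f_1 + \cdots + f_n$ of the full flag type into $\{0,1\}$-vectors, I would use that $\mathcal{P}_{r_i, d_i, f_i} \cong \mathcal{P}_{r_i, d_i}$ (the Borel reduction does not see the positions of the~$1$'s) together with the inductive hypothesis to get $\Omega(P_{f_i}(0)) = J_{r_i, d_i}$, where $r_i = |f_i|$ and $d_i = k r_i$; the Lie algebra homomorphism property, corrected by the Euler-characteristic scalars, then yields
\[
    \Omega\bigl( [[ \dotsc [ P_{f_1}(0), P_{f_2}(0) ], \dotsc ], P_{f_n}(0) ] \bigr)
    = \frac{r_1! \cdots r_n!}{r'!} \, [[ \dotsc [ J_{r_1, d_1}, J_{r_2, d_2} ], \dotsc ], J_{r_n, d_n} ] \ .
\]
Summing over the $\binom{r'}{r_1, \dotsc, r_n}$ flag decompositions with a fixed underlying integer composition $r' = r_1 + \cdots + r_n$ cancels the multinomial factor, so the correction sum collapses to $\sum_{r' = r_1 + \cdots + r_n,\, n > 1,\, r_0 \mid r_i} \tfrac{(-1)^n}{n!} \cdot \tfrac{r_1}{r'} \, [[ \dotsc [ J_{r_1, d_1}, J_{r_2, d_2} ], \dotsc ], J_{r_n, d_n} ]$, which is exactly the correction term for $J_{r', d'}$. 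This completes the induction.

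The part I expect to require the most care is the second and third steps above: correctly propagating the Euler-characteristic normalizations through the Lie brackets — the honest vertex/Lie algebra homomorphism being the unnormalized pushforward, not~$\Omega$ itself — and confirming that at weight~$0$, which is a boundary case of the genericity set-up in \cref{def-par-weights}, Joyce's axioms (verified in \cref{subsec-joyce-axioms}) and the identity \cref{eq-omega-triple-zero} still apply, so that all of the Lie-algebra machinery invoked in Joyce's recursion is genuinely available for $\mu_0$-semistable parabolic bundles and triples.
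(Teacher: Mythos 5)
Your proposal is correct and is essentially the paper's own argument: both rest on the recursive definition \cref{eq-def-joyce-inv} applied to parabolic bundles and to vector bundles (equivalently \cref{eq-pi-triple-zero,eq-pi-pair}), the identity $\Omega(P'_{r,d,1}(0)) = J'_{r,d,1}$ of \cref{eq-omega-triple-zero}, the commutation $\Omega \circ \Pi = \Pi \circ \Omega$, the fact that $\Omega$ is a Lie algebra homomorphism up to the flag Euler characteristics, induction on rank, and the multinomial count of flag decompositions with a fixed composition $(r_1, \dotsc, r_n)$. The only difference is organizational — you match the two recursions term by term (commuting $\Omega$ past the first term via the Cartesian square), whereas the paper compares the two expansions of $\Pi \circ \Omega (P'_{r,d,1}(0))$ — and your cautionary points (the unnormalized pushforward being the honest homomorphism, and the weight-$0$ case being covered by the axioms of \cref{subsec-joyce-axioms}) are handled exactly as you expect.
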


\begin{proof}
    The proof is essentially by
    applying $\Pi$ to both sides of
    \cref{eq-omega-triple-zero},
    then expanding using
    \cref{eq-pi-pair,eq-pi-triple-zero}.

    Namely, we use induction on~$r$.
    The case when $r = 1$ is clear,
    as parabolic structures are trivial.
    When $r > 1$,
    applying $\Omega$ to \cref{eq-pi-triple-zero},
    and using the induction hypothesis, we obtain
    \begin{align*}
        & \phantom{{} = {}}
        \Pi \circ \Omega (P'_{r, d, 1} (0))
        - \Omega (P_{r, d} (0))
        \\
        & =
        \sum_{\substack{
            1^r = f_1 + \cdots + f_n: \\
            n > 1, \ r_0 \, | \, |f_i|
        }} {}
        \frac{(-1)^{n-1}}{n!} \cdot
        \frac{r_1}{r} \cdot
        \frac{r_1! \cdots r_n!}{r!} \cdot
        [ [ \dotsc [ J_{r_1, d_1},
        J_{r_2, d_2} ], \dotsc ] ,
        J_{r_n, d_n} ]
        \\
        & =
        \sum_{\substack{
            r = r_1 + \cdots + r_n: \\
            n > 1, \ r_0 \, | \, r_i
        }} {}
        \frac{(-1)^{n-1}}{n!} \cdot
        \frac{r_1}{r} \cdot
        [ [ \dotsc [ J_{r_1, d_1},
        J_{r_2, d_2} ], \dotsc ] ,
        J_{r_n, d_n} ] \ ,
    \end{align*}
    where in the first step,
    we write $r_i = |f_i|$ and $d_i = |f_i| \cdot d / r$,
    and the extra coefficient comes from the fact that
    $\Omega$ is only a Lie algebra homomorphism up to a factor.
    In the second step,
    we collect all terms with the given $(r_1, \dotsc, r_n)$,
    and there are $r! / (r_1! \cdots r_n!)$ such terms.

    On the other hand, applying $\Pi$ to
    \cref{eq-omega-triple-zero},
    and using \cref{eq-pi-pair}, we see that
    $\Pi \circ \Omega (P'_{r, d} (0)) - J_{r, d}
    = \Pi (J'_{r, d, 1}) - J_{r, d}$
    is given by the same expression.
    This proves that $\Omega (P_{r, d} (0)) = J_{r, d}$.
\end{proof}

\subsection{Computing the wall-crossing}

We now aim to compute the difference
$\mathrm{JK}_{r, d} (c) - J_{r, d}$
of the Jeffrey--Kirwan class from \cref{def-jkrd}
and the Joyce class,
for a generic parabolic weight~$c$ close to $0$.

\subsubsection{Overview of computation}

\label{para-overview}
We aim to understand the diagram
\begin{equation*}
    \begin{tikzcd}[column sep={5em, between origins}, row sep={3em, between origins}]
        & P'_{r, d, 1} (c)
        \ar[rr, leftrightarrow, "\text{wall-crossing}"]
        \ar[dl, mapsto, "\Pi"']
        && P'_{r, d, 1} (0)
        \ar[dl, mapsto, "\smash{\Pi + (\text{corr.})}"']
        \ar[dd, mapsto, "\Omega"]
        \\
        P_{r, d} (c)
        \ar[rr, leftrightarrow, "\text{wall-crossing}"']
        \ar[dd, mapsto, "\Omega"']
        && P_{r, d} (0)
        \ar[dd, mapsto, "\Omega"']
        \\
        &&& J'_{r, d, 1}
        \ar[dl, mapsto, "\Pi + (\text{corr.})"]
        \\
        \mathrm{JK}_{r, d} (c)
        \ar[rr, dotted, dash]
        && J_{r, d}
        & \rlap{\hspace{2em},}
    \end{tikzcd}
\end{equation*}
so that we can compare the difference
$\mathrm{JK}_{r, d} (c) - J_{r, d}$.
Here, `corr.'\ refers to the correction terms
given by Lie brackets in \cref{eq-def-joyce-inv}.

We already know all solid edges except
those labelled with `wall-crossing',
and knowing any of the two is enough
for knowing the dotted edge.
Since both edges fit into the Joyce wall-crossing framework,
we describe both of them in the following sections.

Note that four of the classes in the diagram
are fundamental classes of smooth projective varieties:
\begin{equation*}
    \begin{tikzcd}[column sep=2em, row sep={3em, between origins}]
        & \mathcal{P}'^{\smash{\mathrm{ss}}}_{r, d, 1} (c)
        \ar[rr, dashed, leftrightarrow]
        \ar[dl, "\pi"']
        && \mathcal{P}'^{\smash{\mathrm{ss}}}_{r, d, 1} (0)
        \ar[dd, "\omega"]
        \\
        \mathcal{P}^{\smash{\mathrm{ss}}}_{r, d} (c)
        \\
        &&& \mathcal{M}'^{\smash{\mathrm{ss}}}_{r, d, 1} \rlap{ .}
    \end{tikzcd}
\end{equation*}
The map~$\pi$ here is a smooth projective bundle,
$\omega$ is a flag variety bundle,
and the dashed arrow is a change of stability conditions in GIT.

It is sometimes useful to interpret the space
$\mathcal{P}'^{\smash{\mathrm{ss}}}_{r, d, 1} (0)$
alternatively as follows.
Consider the slope function in \cref{def-par-weights},
\[
    \mu_c^\varepsilon (r, d, f, v) =
    \frac{1}{r} \biggl(
        d
        - \sum_{i=1}^{\ell} c_i f_i
        + \varepsilon v
    \biggr) \ ,
\]
but instead of choosing~$\varepsilon$ to be sufficiently small
(as is the case for~$\mu_c$),
we take a fixed value, such as $\varepsilon = 1/2$,
and consider the family of weights
$\varepsilon' c = (\varepsilon' c_1, \dotsc, \varepsilon' c_\ell)$, where~$\varepsilon' \in [0, 1]$.
Then for any weight~$c$, we have
\[
    \mathcal{P}'^{\smash{\mathrm{ss}}}_{r, d, 1} (0) =
    \mathcal{P}'^{\smash{\mathrm{ss}}}_{r, d, 1} (\mu_{\smash{\varepsilon' c}}^{1/2})
\]
when~$\varepsilon' > 0$ is sufficiently small.
This is because for both stability conditions
$\mu_0 = \mu_0^\varepsilon$ and~$\mu_{\smash{\varepsilon' c}}^{1/2}$,
a triple $(E, F, V)$ (where~$F$ is a full flag)
is stable if and only if the pair
$(E, V)$ is stable,
and both stability conditions satisfy
$\text{stable} = \text{semistable}$.

Roughly speaking, the path as $(\varepsilon, \varepsilon')$
moves along $(0^+, 0)$---$(1/2, 0)$---$(1/2, a)$
does not meet any walls if~$a$ is small enough.
Later on, we will do wall-crossing from $(0^+, 1)$
to this chamber.

\subsubsection{General wall-crossing}

We compute the wall-crossing
\[
    P_{r, d} (c) - P_{r, d} (0) \ ,
\]
which is the middle horizontal arrow of \cref{para-overview},
and use it to deduce
$\mathrm{JK}_{r, d} (c) - J_{r, d}$.

We now fix $(r, d)$, and take $\ell = r$.
Use the following notations:
\begin{itemize}
    \item
        Let $r_0 = r / \mathrm{gcd} (r, d)$.
    \item
        For a parabolic type $f \in \{ 0, 1 \}^r$,
        write $|f| = \sum_{i=1}^r f_i$.
    \item
        When $r_0 \mid |f|$,
        we use the shorthand $P_f (c) = P_{r', d', f} (c)$, where
        $r' = |f|$ and $d' = |f| \cdot d / r$.
\end{itemize}
We have Joyce's wall-crossing formula
\begin{align*}
    P_{r, d} (c) - P_{r, d} (0)
    & =
    \sum_{\substack{
        1^r = f_1 + \cdots + f_n: \\
        n > 1, \ r_0 \, | \, |f_i|
    }} {}
    \widetilde{U} (f_1, \dotsc, f_n; \mu_0, \mu_c) \cdot
    [ [ \dotsc [ P_{f_1} (0),
    P_{f_2} (0) ], \dotsc ] ,
    P_{f_n} (0) ] \ .
\end{align*}
Applying~$\Omega$, we obtain the following:

\begin{theorem}
    \label{thm-wcf-general-weights}
    Fix $(r, d)$, and let~$c$ be a generic parabolic weight.
    Then we have
    \begin{equation*}
        \mathrm{JK}_{r, d} (c) - J_{r, d} =
        \sum_{\substack{
            1^r = f_1 + \cdots + f_n: \\
            n > 1, \ r_0 \, | \, |f_i|
        }} {}
        \frac{r_1! \cdots r_n!}{r!} \cdot
        \widetilde{U} (f_1, \dotsc, f_n; \mu_0, \mu_c) \cdot
        [ [ \dotsc [ J_{r_1, d_1},
        J_{r_2, d_2} ], \dotsc ] ,
        J_{r_n, d_n} ] \ ,
    \end{equation*}
    where $|f_i| > 0$,
    $r_i = |f_i|$,
    and $d_i = r_i \cdot d / r$.
    \qed
\end{theorem}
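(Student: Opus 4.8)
The plan is to deduce the formula by applying the map~$\Omega$ to Joyce's wall-crossing formula for parabolic bundles displayed just above, in the same spirit as the proof of \cref{thm-omega-par-zero}. On the left-hand side, linearity of~$\Omega$ gives
\[
    \Omega\bigl(P_{r,d}(c) - P_{r,d}(0)\bigr) = \Omega\bigl(P_{r,d}(c)\bigr) - \Omega\bigl(P_{r,d}(0)\bigr) \ ;
\]
the first term equals $\mathrm{JK}_{r,d}(c)$ by \cref{def-jkrd} and the second equals $J_{r,d}$ by \cref{thm-omega-par-zero}, so this side becomes the desired difference $\mathrm{JK}_{r,d}(c) - J_{r,d}$.

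Next I would apply~$\Omega$ termwise to the iterated brackets $[[\dotsc[P_{f_1}(0),P_{f_2}(0)],\dotsc],P_{f_n}(0)]$ on the right. Writing $\Omega = \binom{r}{f_1,\dotsc,f_\ell}^{-1}\cdot\omega_*\bigl((-)\cap e(\mathbb{T}_\omega)\bigr)$ as in \cref{eq-def-omega}, the underlying pushforward $\omega_*\bigl((-)\cap e(\mathbb{T}_\omega)\bigr)$ is an honest homomorphism of Joyce vertex algebras, hence of the induced Lie algebras at a fixed slope, by \cite{gross-joyce-tanaka-2022} and \cite[\S 2.3.5]{bu-vertex}; thus it carries the $n$-fold bracket of the $P_{f_i}(0)$ to the $n$-fold bracket of the classes $\omega_*\bigl((-)\cap e(\mathbb{T}_\omega)\bigr)(P_{f_i}(0))$. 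For each~$i$, the parabolic type $f_i \in \{0,1\}^r$ has exactly $r_i = |f_i|$ nonzero entries, all equal to~$1$, so $\mathcal{P}_{r_i,d_i,f_i}$ is identified with the full-flag stack $\mathcal{P}_{r_i,d_i}$ — under which $P_{f_i}(0)$ corresponds to $P_{r_i,d_i}(0)$ — and the flag bundle $\mathcal{P}_{r_i,d_i,f_i}\to\mathcal{M}_{r_i,d_i}$ has fibre Euler characteristic $\binom{r_i}{f_i} = r_i!$; hence $\omega_*\bigl((-)\cap e(\mathbb{T}_\omega)\bigr)(P_{f_i}(0)) = r_i!\cdot\Omega\bigl(P_{r_i,d_i}(0)\bigr) = r_i!\cdot J_{r_i,d_i}$, the last equality being \cref{thm-omega-par-zero} in rank~$r_i$. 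Pulling these scalars out of the bracket and dividing by $\binom{r}{1^r} = r!$, the normalization attached to the full-flag type~$1^r$ of the source class $P_{r,d}(c) - P_{r,d}(0)$, one obtains
\[
    \Omega\bigl([[\dotsc[P_{f_1}(0),P_{f_2}(0)],\dotsc],P_{f_n}(0)]\bigr)
    = \frac{r_1!\cdots r_n!}{r!}\cdot [[\dotsc[J_{r_1,d_1},J_{r_2,d_2}],\dotsc],J_{r_n,d_n}] \ .
\]
Since the combinatorial coefficients $\widetilde{U}(f_1,\dotsc,f_n;\mu_0,\mu_c)$ are unaffected by~$\Omega$, summing over all decompositions $1^r = f_1 + \cdots + f_n$ with $n > 1$ and $r_0 \mid |f_i|$ yields precisely the stated identity.

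The one point requiring genuine care is the bookkeeping of normalization constants: one must verify that~$\Omega$, which is a Lie algebra homomorphism only up to the multinomial factor $\binom{r}{f}^{-1}$, contributes exactly $r_1!\cdots r_n!/r!$ and nothing else when applied to an $n$-fold bracket of full-flag classes whose ranks sum to~$r$. This is the same normalization computation that already appears in the proof of \cref{thm-omega-par-zero}. The two external inputs — that Joyce's wall-crossing formula is available in the setting of parabolic bundles, and that $\omega_*\bigl((-)\cap e(\mathbb{T}_\omega)\bigr)$ is a vertex algebra homomorphism — are furnished by \cref{subsec-joyce-axioms} and by \cite{gross-joyce-tanaka-2022,bu-vertex}, respectively, so beyond the normalization bookkeeping the argument is a direct assembly of facts already in hand.
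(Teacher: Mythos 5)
Your proposal is correct and follows essentially the same route as the paper: the paper obtains the theorem precisely by applying $\Omega$ to Joyce's wall-crossing formula for $P_{r,d}(c)-P_{r,d}(0)$, identifying the left side via \cref{def-jkrd} and \cref{thm-omega-par-zero}, and attributing the factor $r_1!\cdots r_n!/r!$ to $\Omega$ being a Lie algebra homomorphism only up to the multinomial normalization. Your explicit bookkeeping (pulling out $r_i!$ from each bracket entry via the identification of $\mathcal{P}_{r_i,d_i,f_i}$ with the full-flag stack, then dividing by $r!$) is exactly the normalization computation the paper invokes, as in its proof of \cref{thm-omega-par-zero}.
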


Here, the extra coefficient
$r_1! \cdots r_n! / r!$
is due to the fact that
$\Omega$ is only a Lie algebra homomorphism up to a factor.

\subsubsection{Computing for specific weights}
\label{subsec-wcf-special-weights}

For special choices of~$c$,
the above wall-crossing can be made more explicit.
We compute this for the two generic weights
\[
    c^+ = \Bigl( \ c_i = 1 - \varepsilon^{i-1} \ \Bigr) \ ,
    \qquad
    c^- = \Bigl( \ c_i = \varepsilon^{r - i} \ \Bigr) \ ,
    \qquad
    i = 1, \dotsc, r \ ,
\]
with $\varepsilon > 0$ sufficiently small.
These weights are close to~$0$ in the sense of \cref{def-par-weights}.

\begin{theorem}
    \label{thm-wcf-special-weights}
    We have
    \begin{alignat*}{2}
        \mathrm{JK}_{r, d} (c^+) - J_{r, d}
        & =
        \sum_{\substack{
            r = r_1 + \cdots + r_n: \\
            n > 1, \ r_0 \, | \, r_i
        }} {}
        & \frac{(-1)^{n-1}}{n!} \cdot \frac{r_1}{r} \cdot
        [[ \dotsc [J_{r_1, d_1}, J_{r_2, d_2}], \dotsc, J_{r_n, d_n}]]
        & \ ,
        \\
        \mathrm{JK}_{r, d} (c^-) - J_{r, d}
        & =
        \sum_{\substack{
            r = r_1 + \cdots + r_n: \\
            n > 1, \ r_0 \, | \, r_i
        }} {}
        & \frac{r_1}{r} \cdot
        [[ \dotsc [J_{r_1, d_1}, J_{r_2, d_2}], \dotsc, J_{r_n, d_n}]]
        & \ ,
    \end{alignat*}
    where $r_0 = r / {\operatorname{gcd} (r, d)}$,
    $r_i > 0$,
    and $d_i = r_i \cdot d / r$.
\end{theorem}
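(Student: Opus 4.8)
The plan is to deduce both identities from \cref{thm-wcf-general-weights} by evaluating the universal coefficients $\widetilde{U}(f_1, \dotsc, f_n; \mu_0, \mu_{c^\pm})$ explicitly for these two weights and resumming over flag decompositions with a fixed rank vector; equivalently, one applies~$\Omega$ to Joyce's wall-crossing formula $P_{r, d}(c^\pm) - P_{r, d}(0) = \sum \widetilde{U}(f_1, \dotsc, f_n; \mu_0, \mu_{c^\pm})\,[[\dotsc], P_{f_n}(0)]$, so the whole problem becomes combinatorial.

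First I would unwind \cref{def-wcf-coeff} in our situation. Since $\mu_0$ is constant on all classes of slope~$k$, every factor of $S(\beta_1, \dotsc, \beta_m; \mu_0, \mu_c)$ is $-1$ or~$0$, so $S = (-1)^{m-1}$ precisely when $\mu_c(\beta_1 + \dots + \beta_j) > \mu_c(\beta_{j+1} + \dots + \beta_m)$ for every~$j$, and $S = 0$ otherwise. Moreover, for $c^\pm$ the equalities $\mu_{c^\pm}(\gamma) = \mu_{c^\pm}(1^r)$ are polynomial identities in~$\varepsilon$ that fail for small~$\varepsilon$ unless~$\gamma$ has full support, and since the $\gamma_i$ in the definition of~$U$ have pairwise disjoint supports, the corresponding constraint forces $\ell = 1$; hence $U(f_1, \dotsc, f_n; \mu_0, \mu_{c^\pm})$ reduces to $\sum_{0 = a_0 < \dots < a_m = n} S(\beta_1, \dotsc, \beta_m; \mu_0, \mu_{c^\pm}) \prod_{j=1}^{m} \frac{1}{(a_j - a_{j-1})!}$, with $\beta_i = f_{a_{i-1}+1} + \dots + f_{a_i}$. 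Next I would record the single sign comparison that drives everything: for a splitting $A \sqcup A' = \{1, \dotsc, r\}$ into slope-$k$ classes, comparing lowest-order terms in~$\varepsilon$ of the relevant sums (which involve $\varepsilon^{l-1}$ for~$c^+$ and $\varepsilon^{r-l}$ for~$c^-$) gives $\mu_{c^+}(A) > \mu_{c^+}(A') \iff 1 \in A$ and $\mu_{c^-}(A) > \mu_{c^-}(A') \iff r \notin A$. Writing $\{1, \dotsc, r\} = S_1 \sqcup \dots \sqcup S_n$ for the partition attached to $(f_1, \dotsc, f_n)$, so that $\beta_1 + \dots + \beta_j$ is supported on $S_1 \cup \dots \cup S_{a_j}$, this shows that $S(\beta_1, \dotsc, \beta_m; \mu_0, \mu_{c^+}) = (-1)^{m-1}$ exactly when $a_1 \geq i^*$, where~$i^*$ is the block containing~$1$, and $S(\beta_1, \dotsc, \beta_m; \mu_0, \mu_{c^-}) = (-1)^{m-1}$ exactly when $a_{m-1} < j^*$, where~$j^*$ is the block containing~$r$. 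In particular $U(f_{\sigma(1)}, \dotsc, f_{\sigma(n)}; \mu_0, \mu_{c^\pm})$ depends only on~$n$ and on the position of the distinguished block.

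Substituting into \cref{thm-wcf-general-weights} and collecting, for a fixed rank vector $(r_1, \dotsc, r_n)$, all $\binom{r}{r_1, \dotsc, r_n}$ ordered flag decompositions with $|f_i| = r_i$ — among which the fraction having the distinguished index in block~$i$ equals $r_i/r$ — reduces the coefficient of $[[\dotsc[J_{r_1,d_1}, J_{r_2,d_2}], \dotsc], J_{r_n,d_n}]$ to a universal quantity in~$n$ and $(r_1, \dotsc, r_n)$. A generating-function identity — essentially $\sum_{k_1 + \dots + k_m = n,\ k_j \geq 1}(-1)^{m-1}\prod_j \frac{1}{k_j!} = \frac{(-1)^{n-1}}{n!}$, refined by the constraint on $k_1 = a_1$ — then collapses this to the coefficients stated in the theorem, the extra factor $r_1!\cdots r_n!/r!$ being introduced by~$\Omega$ exactly as in \cref{thm-wcf-general-weights}. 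This manipulation is cleanest carried out inside the free associative algebra on symbols $e_{f_i}$, where $\sum_{\sigma \in \mathfrak{S}_n} U(f_{\sigma(1)}, \dotsc, f_{\sigma(n)}; \mu_0, \mu_{c^\pm})\, e_{\sigma(1)} \cdots e_{\sigma(n)}$ is the well-defined Lie element and the passage to the~$\widetilde{U}$'s is \cite[Theorem~5.4]{joyce-2008-configurations-iv}.

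The hard part will be this last generating-function identity, together with keeping straight the bookkeeping between ordered and unordered flag decompositions and between $U$ and $\widetilde{U}$ — which is why I would run the computation in the free Lie algebra. I would also record an alternative route through the map~$\Pi$ and the moduli of triples, following the diagram of \S\ref{para-overview}: since $c^\pm$ is generic, $\Pi(P'_{r, d, 1}(c^\pm)) = P_{r, d}(c^\pm)$ by \cref{eq-pi-triple-generic}, so one may instead wall-cross $P'_{r, d, 1}(c^\pm)$ to $P'_{r, d, 1}(0)$ on the triple side, where $\Omega(P'_{r, d, 1}(0)) = J'_{r, d, 1}$ is known (\cref{eq-omega-triple-zero}) and expands via \cref{eq-pi-pair}; the same formulas then follow using $\Omega \circ \Pi = \Pi \circ \Omega$.
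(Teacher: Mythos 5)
Your proposal is correct and follows essentially the same route as the paper: the paper's proof simply asserts, ``by a standard but tedious combinatorics argument,'' the valid choice $\widetilde{U}(f_1,\dotsc,f_n;\mu_0,\mu_{c^{\pm}})$ supported on decompositions with the distinguished index (1 for $c^+$, $r$ for $c^-$) in the first block, and then collects terms after applying $\Omega$ exactly as you do with the $r_1/r$ count. Your outline of that combinatorial step (genericity forcing $\ell=1$, the sign analysis $1\in A$ resp.\ $r\notin A$, the refined exponential identity, and the $U\leftrightarrow\widetilde{U}$ translation in the free Lie algebra) is precisely the argument the paper leaves implicit, and it goes through.
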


\begin{proof}
    Fix a decomposition $1^r = f = f_1 + \cdots + f_n$,
    and write $r_i = |f_i|$.

    Following the definitions (by a standard but tedious combinatorics argument),
    we can show that a valid choice of the $\widetilde{U}$ coefficients is
    \begin{align*}
        \widetilde{U} (f_1, \dotsc, f_n; \mu_0, \mu_{c^+})
        & =
        \begin{cases}
            (-1)^{n-1}/n!,
            & (f_1)_1 = 1,
            \\
            0,
            & \text{otherwise},
        \end{cases}
        \\
        \widetilde{U} (f_1, \dotsc, f_n; \mu_0, \mu_{c^-})
        & =
        \begin{cases}
            1/n!,
            & (f_1)_r = 1,
            \\
            0,
            & \text{otherwise}.
        \end{cases}
    \end{align*}
    For~$c^+$, we then have
    \begin{align*}
        P_{r, d} (c^+) - P_{r, d} (0)
        & =
        \sum_{\substack{
            1^r = f_1 + \cdots + f_n: \\
            n > 1, \ (f_1)_1 = 1, \ r_0 \, | \, |f_i|
        }} {}
        \frac{(-1)^{n-1}}{n!} \cdot
        [ [ \dotsc [ P_{f_1} (0),
        P_{f_2} (0) ], \dotsc ] ,
        P_{f_n} (0) ] \ .
    \end{align*}
    Applying~$\Omega$, we obtain
    \begin{align*}
        \mathrm{JK}_{r, d} (c^+) - J_{r, d}
        & =
        \sum_{\substack{
            1^r = f_1 + \cdots + f_n: \\
            n > 1, \ (f_1)_1 = 1, \ r_0 \, | \, |f_i|
        }} {}
        \frac{(-1)^{n-1}}{n!} \cdot
        \frac{r_1! \cdots r_n!}{r!} \cdot
        [ [ \dotsc [ J_{r_1, d_1},
        J_{r_2, d_2} ], \dotsc ] ,
        J_{r_n, d_n} ]
        \\
        & =
        \sum_{\substack{r = r_1 + \cdots + r_n: \\ n > 1, \ r_0 \, | \, r_i}} {}
        \frac{(-1)^{n-1}}{n!} \cdot \frac{r_1}{r} \cdot
        [ [ \dotsc [ J_{r_1, d_1},
        J_{r_2, d_2} ], \dotsc ] ,
        J_{r_n, d_n} ]
    \end{align*}
    where we write $d_i = r_i \cdot d / r$,
    and in the second step,
    we collect all terms with the given sequence
    $(r_1, \dotsc, r_n)$, and there are
    $(r - 1)! / [(r_1 - 1)! \, r_2! \cdots r_n!]$ such terms.
    The case of $c^-$ is analogous.
\end{proof}

We notice the coincidence that
\[
    \mathrm{JK}_{r, d} (c^+) = \Pi (J'_{r, d, 1}) \ .
\]
This seems to be related to the fact that when considering $c^+$,
the one-dimensional subspace in the flag has the most importance,
and this is somehow analogous to Joyce's formalism,
where we consider rank one subsheaves.

\subsubsection{Computing via wall-crossing for triples}

We can also approach the wall-crossing
$\mathrm{JK}_{r, d} (c) - J_{r, d}$
via triples, using the wall-crossing
$P'_{r, d} (c) - P'_{r, d} (0)$
on the top row in the diagram of~\cref{para-overview}.

We now assume that~$c$ is generic
in the following stronger sense:
For any non-empty subsets $I, J \subset \{1, \dotsc, r\}$,
we have $\sum_{i \in I} c_i / |I| \neq \sum_{j \in J} c_j / |J|$.

Write $\beta = (r, d, 1^r, 1)$.
In this case, it is convenient to use the slope function
$\mu'_c = \mu_{\smash{\epsilon' c}}^{1/2}$ in \cref{para-overview}
in place of~$\mu_0$.
For a decomposition $\beta = \beta_1 + \cdots + \beta_n$,
with $\beta_i = (r_i, d_i, f_i, v_i)$
and $d_i / r_i = d / r$,
we may take
\[
    \widetilde{U} (\beta_1, \dotsc, \beta_n; \mu'_c, \mu_c) =
    \begin{cases}
        1,
        & v_1 = 1 \text{ and }
        \mu_c (f_1) < \mu_c (f)
        < \mu_c (f_2) < \cdots < \mu_c (f_n) \ ,
        \\
        0,
        & \text{otherwise},
    \end{cases}
\]
where we write $\mu_c (f_i) = \mu_c (r_i, d_i, f_i, 0)$ for short,
so that
\begin{align*}
    P'_{r, d, 1} (c) - P'_{r, d, 1} (\mu'_c) =
    \hspace{-1em}
    \sum_{\substack{
        1^r = f_1 + \cdots + f_n: \\
        n > 1, \ r_0 \, | \, |f_i|, \\
        \mu_c (f_1) < \mu_c (f) < \mu_c (f_2) < \cdots < \mu_c (f_n)
    }} {}
    \hspace{-1em}
    [ [ \dotsc [ P'_{f_1, 1} (\mu'_c),
    P_{f_2} (c) ], \dotsc ] ,
    P_{f_n} (c) ] \ ,
\end{align*}
where we note that $P'_{r, d, 1} (\mu'_c) = P'_{r, d, 1} (0)$.
Applying $\Pi \circ \Omega$, we obtain the following:

\begin{theorem}\label{thm7.5} 
    We have
    \begin{align*}
        \mathrm{JK}_{r, d} (c) - \Pi (J'_{r, d, 1}) =
        \hspace{-2.5em}
        \sum_{\substack{
            1^r = f_1 + \cdots + f_n: \\
            n > 1, \ r_0 \, | \, |f_i|, \\
            \mu_c (f_1) < \mu_c (f) < \mu_c (f_2) < \cdots < \mu_c (f_n)
        }} {}
        \hspace{-2.5em}
        \frac{r_1! \cdots r_n!}{r!} \cdot
        \frac{r_1}{r} \cdot
        [ [ \dotsc [ \Pi (J'_{r_1, d_1, 1}),
        \mathrm{JK}_{r_2, d_2} (c) ], \dotsc ] ,
        \mathrm{JK}_{r_n, d_n} (c) ] \ ,
    \end{align*}
    where $|f_i| > 0$,
    $r_i = |f_i|$,
    and $d_i = r_i \cdot d / r$.
    \qed
\end{theorem}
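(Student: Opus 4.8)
The plan is to obtain the formula by applying the composite $\Pi \circ \Omega$ to the wall-crossing identity for parabolic triples displayed just above the statement, keeping careful track of the scalar factors that arise because $\Omega$ and $\Pi$ are homomorphisms of Joyce vertex algebras---hence of the associated Lie algebras---only \emph{up to a multiplicative constant}. First I would collect the identities already in hand: the commutation $\Pi \circ \Omega = \Omega \circ \Pi$; the relation $\Pi(P'_{r,d,1}(c)) = P_{r,d}(c)$ of \cref{eq-pi-triple-generic}; the definition $\mathrm{JK}_{r,d}(c) = \Omega(P_{r,d}(c))$ of \cref{def-jkrd}; the relation $\Omega(P'_{r,d,1}(0)) = J'_{r,d,1}$ of \cref{eq-omega-triple-zero}; and the equality $P'_{r,d,1}(\mu'_c) = P'_{r,d,1}(0)$ noted in \cref{para-overview}. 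Applying $\Pi \circ \Omega$ to the left-hand side of the triple wall-crossing formula and chaining these identities gives exactly $\mathrm{JK}_{r,d}(c) - \Pi(J'_{r,d,1})$, which is the left-hand side of the theorem.

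The substance is to evaluate $\Pi \circ \Omega$ on the right-hand side, which I would do in two passes. Applying $\Omega$ first: by the same bookkeeping as in the proof of \cref{thm-omega-par-zero}, $\Omega$ carries the iterated bracket $[[\dotsc[P'_{f_1,1}(\mu'_c), P_{f_2}(c)],\dotsc],P_{f_n}(c)]$ to $\frac{r_1!\cdots r_n!}{r!}\,[[\dotsc[\Omega(P'_{f_1,1}(\mu'_c)), \Omega(P_{f_2}(c))],\dotsc],\Omega(P_{f_n}(c))]$, the multinomial factor telescoping out of the ratios of Euler characteristics of the flag-variety fibres of $\omega$ (each fibre over a type $f_i \in \{0,1\}^r$ with $|f_i| = r_i$ being a full flag variety $\mathrm{Fl}(r_i)$, of Euler characteristic $r_i!$). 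Here $\Omega(P'_{f_1,1}(\mu'_c)) = J'_{r_1,d_1,1}$ by the analogue of \cref{eq-omega-triple-zero}, and $\Omega(P_{f_i}(c)) = \mathrm{JK}_{r_i,d_i}(c)$ by \cref{def-jkrd}; at this last point one uses that, for $c$ close to~$0$, the $\mu'_c$-semistable locus of pure parabolic bundles of slope~$k$ coincides with the $\mu_c$-semistable locus, so that indeed $P_{f_i}(\mu'_c) = P_{f_i}(c)$.

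Then I would apply $\Pi$. On the $v = 0$ entries of the bracket the relevant morphism is the identity, so $\Pi$ fixes each $\mathrm{JK}_{r_i,d_i}(c)$, while on the $v = 1$ entry it produces $\Pi(J'_{r_1,d_1,1})$. The decisive point is the scalar: because the Euler characteristic $\binom{r'(N-g+1)+d'}{v}$ appearing in $\Pi$ is, for $v = 1$ and $d' = kr'$, proportional to $r'$, one has $\Pi([a,b]) = \frac{r(a)}{r(a)+r(b)}\,[\Pi(a),\Pi(b)]$ whenever $a$ has $v = 1$ and $b$ has $v = 0$; running this down the nested bracket makes the factors telescope to a single $r_1/r$. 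Multiplying the $r_1!\cdots r_n!/r!$ from $\Omega$ by this $r_1/r$, and using that the chosen coefficients $\widetilde{U}(\beta_1,\dotsc,\beta_n;\mu'_c,\mu_c)$ equal~$1$ precisely on the terms satisfying $\mu_c(f_1) < \mu_c(f) < \mu_c(f_2) < \cdots < \mu_c(f_n)$, one arrives at the asserted identity.

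I expect the main obstacle to be the scalar bookkeeping for $\Pi$: one must verify that the $v$-grading propagates correctly through the iterated bracket (only the leftmost entry ever carries $v = 1$) and that the products of binomial/Euler-characteristic ratios genuinely collapse to $r_1/r$. The matching of stability conditions on the pure-parabolic sub-objects ($\mu'_c$ versus $c$), and the check that the $\omega$-fibres over the $f_i$ are full flag varieties, are secondary points; the former requires precisely that~$c$ is close to~$0$, and the hypothesis $k > 2g-2$ is used only so that one may take $N = 0$, working with the untwisted pairs $(E, s \in \mathrm{H}^0(E))$ as in \cref{i3}.
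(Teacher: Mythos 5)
Your proposal is correct and follows essentially the same route as the paper: the paper proves \cref{thm7.5} precisely by applying $\Pi \circ \Omega$ to the displayed wall-crossing identity for $P'_{r,d,1}(c) - P'_{r,d,1}(\mu'_c)$ with the stated choice of $\widetilde{U}$ coefficients, the factors $\frac{r_1!\cdots r_n!}{r!}$ and $\frac{r_1}{r}$ arising exactly as you compute, from $\Omega$ and $\Pi$ being Lie algebra homomorphisms only up to the flag-variety Euler characteristics and the (rank-proportional) projective-space Euler characteristics, respectively. Your scalar bookkeeping (the telescoping of $\frac{r_1}{r_1+r_2}\cdots$ to $\frac{r_1}{r}$, and the identifications via \cref{eq-pi-triple-generic}, \cref{def-jkrd}, \cref{eq-omega-triple-zero}, and $P'_{r,d,1}(\mu'_c)=P'_{r,d,1}(0)$) is exactly the justification the paper leaves implicit.
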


From here, using \cref{eq-pi-pair} and by induction on rank,
it is possible to express the difference
$\mathrm{JK}_{r, d} (c) - J_{r, d}$
in terms of $J_{r', d'}$ for $r' < r$,
resulting in a formula in a similar form to \cref{thm-wcf-general-weights}.

\section{Computation in low ranks}

\label{sec-computation}

We demonstrate our comparison of different
generalized intersection pairings
for vector bundles on curves in low ranks,
where explicit computations of their difference are possible.

\subsection{Comparing the Jeffrey--Kirwan class with the Joyce class}

\subsubsection{Rank 2}\label{S8.1.1}

For $(r, d) = (2, 0)$,
the only term that can appear in the wall-crossing formula
in \cref{thm-wcf-general-weights} is
$[J_{1, 0}, J_{1, 0}] = 0$,
so that for any weight~$c$, we have
\[
    \mathrm{JK}_{2, 0} (c) = J_{2, 0} \ ,
\]
without correction terms.
Moreover, all wall-crossing and
correction terms in the diagram in \cref{para-overview}
map to zero in $\mathrm{H}_\bullet (\mathcal{M}^{\smash{\mathrm{ss}}}_{2, 0})$,
because they become $[J_{1, 0}, J_{1, 0}]$.

\subsubsection{Rank 3}\label{S8.1.2}

For $(r, d) = (3, 0)$,
\cref{thm-wcf-special-weights} gives
\[
    \mathrm{JK}_{3, 0} (c^\pm) - J_{3, 0} =
    \mp \frac{1}{6} \cdot [J_{2, 0}, J_{1, 0}] \ .
\]
In this case,
$c^\pm$ represent the only two chambers of generic weights,
separated by the wall given by $c_3 - c_2 = c_2 - c_1$.

The wall-crossing term $[J_{2, 0}, J_{1, 0}]$
has a fully explicit formula.
Namely, applying the formula in \cref{subsec-vb} gives
\[
    [J_{2, 0}, J_{1, 0}] =
    \frac{1}{2} \cdot
    \sumbar_{d=1}^{\infty} {}
    [[J_{1, -d}, J_{1, d}], J_{1, 0}] \ ,
\]
and the right-hand side can be computed using
\cite[Theorem~B.6]{bu-2023-curves}.
For example, an analogous computation to that in
\cite[Theorem~5.8]{bu-2023-curves} gives
\begin{align}
    & \biggl<
        [J_{2, 0}, J_{1, 0}] \ , \
        \Xi \biggl(
            \prod_{l=2}^\infty S_{1, 0, l}^{\smash{m_l}} \cdot
            \exp (\alpha S_{1, 2, 2})
        \biggr)
    \biggr> =
    \notag \\
    & \hspace{4em}
    \frac{1}{2} \cdot
    \mathrm{res}_{z_2} \circ \mathrm{res}_{z_1} \biggl\{
        \frac{-\alpha^{3g}}
        {( z_1 z_2 (z_2 - z_1) )^{2g-2} \cdot (1 - \exp (\alpha z_1))}
        \cdot {}
    \notag \\
    & \hspace{8em}
        \prod_{l=2}^\infty {} \biggl[
            \frac{1}{l!} \cdot \biggl(
                \Bigl(
                    \frac{-z_1 - z_2}{3}
                \Bigr)^l +
                \Bigl(
                    \frac{2z_1 - z_2}{3}
                \Bigr)^l +
                \Bigl(
                    \frac{-z_1 + 2z_2}{3}
                \Bigr)^l
            \biggr)
        \biggr]^{m_l}
    \biggr\} \ ,
    \label{eq-j20-j10}
\end{align}
where
$S_{1,0,l} \in \mathrm{H}^{2l} (\mathcal{M}_{3, 0})$
and
$S_{1,2,2} \in \mathrm{H}^{2} (\mathcal{M}_{3, 0})$
are certain tautological classes,
$\Xi \colon \mathrm{H}^{\bullet} (\mathcal{M}_{3, 0}) \to
\mathrm{H}^{\bullet} (\mathcal{M}^{\smash{\mathrm{rig}}}_{3, 0})$
is a retract of the canonical map in the other direction
defined in \cite[\S2.3]{bu-2023-curves},
$(m_2, m_3, \dotsc)$ is a sequence of non-negative integers,
of which all but finitely many are zero,
and $\alpha$ is a formal symbol,
treated as a complex variable in the residue formula.

From \cref{eq-j20-j10}, one can compute that
\begin{alignat*}{2}
    & \text{when } g = 2 \ ,
    & \qquad
    \bigl<
        [J_{2, 0}, J_{1, 0}] \ , \
        \Xi (S_{1, 0, 3} \cdot S_{1, 2, 2}^7)
    \bigr>
    & =
    \frac{70}{9} \ ;
    \\
    & \text{when } g = 3 \ ,
    & \qquad
    \bigl<
        [J_{2, 0}, J_{1, 0}] \ , \
        \Xi (S_{1, 0, 3}^3 \cdot S_{1, 2, 2}^{10})
    \bigr>
    & =
    -\frac{32200}{729} \ ;
\end{alignat*}
etc.
In particular, the wall-crossing term
$[J_{2, 0}, J_{1, 0}]$ is non-zero in general,
so the three classes
$\mathrm{JK}_{3, 0} (c^+)$, $\mathrm{JK}_{3, 0} (c^-)$,
and $J_{3, 0}$ are all different.

\subsubsection{Higher ranks}\label{S8.1.3}

In higher ranks,
\cref{thm-wcf-special-weights} gives
\begin{align*}
    \mathrm{JK}_{4, 0} (c^\pm) - J_{4, 0}
    & =
    \mp \frac{1}{4} \cdot [J_{3, 0}, J_{1, 0}]
    + \frac{1}{24} \cdot [[J_{2, 0}, J_{1, 0}], J_{1, 0}] \ ,
    \\
    \mathrm{JK}_{4, 2} (c^\pm) - J_{4, 2}
    & = 0 \ ,
    \\
    \mathrm{JK}_{5, 0} (c^\pm) - J_{5, 0}
    & =
    \mp \frac{3}{10} \cdot [J_{4, 0}, J_{1, 0}]
    \mp \frac{1}{10} \cdot [J_{3, 0}, J_{2, 0}]
    \\
    & \hspace{4em}
    + \frac{1}{15} \cdot [[J_{3, 0}, J_{1, 0}], J_{1, 0}]
    + \frac{1}{30} \cdot [[J_{2, 0}, J_{1, 0}], J_{2, 0}] \ ,
\end{align*}
etc. Similarly, all these terms have fully explicit formulae.

\subsection{Comparing JKKW pairings with Jeffrey--Kirwan pairings}
\label{S8.2}

In this subsection, we compare the Jeffrey--Kirwan pairings by parabolic bundles with the JKKW pairings by the partial desingularization when the rank is 2. 

Let $\tcM_{2,0}^{\mathrm{ss}}$ denote the partial desingularization of the rigidified moduli stack $\cM_{2,0}^{\mathrm{ss}}$ of rank 2 semistable bundles of even degree over a smooth projective curve $C$. We recall the computation of 
\beq\label{e31}\int_{\tcM^{\mathrm{ss}}_{2,0}}\xi|_{\tcM^{\mathrm{ss}}_{2,0}} \quad \text{for }\xi\in H^*(\cM^{\mathrm{ss}}_{2,0})\eeq
in \cite[\S8]{jeffrey-kiem-kirwan-woolf-2006-intersection} which contains a few typos, for reader's convenience.  

To be explicit, we compute the intersection pairings of  
\beq\label{e36}\xi=a_2^mf_2^n/n!\in H^*(\cM^{\mathrm{ss}}_{2,0}) \quad \text{with }2m+n=4g-3\eeq
where $a_2, f_2$ are the K\"unneth components of the second Chern class of the universal bundle of degree $4$ and $2$ respectively. 
Recall from \cite{jeffrey-1994-extended} that there is a moment map $\mu:M^{\mathrm{ext}}\to \bk^*$ for the extended moduli space $M^{\mathrm{ext}}$ with $K=SU(2)$ and $\bk=\mathrm{Lie}(K)$ such that 
$$\cP^{\mathrm{s}}_{2,0}(c)\cong \mu^{-1}(c)/T$$ for $c=0^+$.
Let $\widetilde{M}^{\mathrm{ext}}$ be the partial desingularization and $\tilde{\mu}$ be the moment map for the induced action so that the partial desingularization is 
$$\widetilde{\cM}^{\mathrm{ss}}_{2,0}\cong \tilde{\mu}^{-1}(0)/K.$$
By \cite[Proposition 8.4]{JK}, the Jeffery--Kirwan intersection pairing 
\beq\label{e28} \int_{\mathrm{JK}_{2,0}(c)} \xi
=\int_{{\mu}^{-1}(c)/T} \frac{2t}{2}\cup \xi|_{{\mu}^{-1}(c)/T}
=\int_{\tilde{\mu}^{-1}(c)/T} \frac{2t}{2}\cup \xi|_{\tilde{\mu}^{-1}(c)/T}\eeq 
is equal to\footnote{Note that there is a sign error in the first line of \cite[Theorem 24]{JKKW2}.} 
\beq\label{e30}\frac{(-1)^{g-m}}{2^{2m-g+1}}\frac{B_{2g-2-2m}(1)}{(2g-2-2m)!}\eeq
where the Bernoullli numbers are defined by
\[ \frac{te^{\epsilon t}}{e^t-1}=\sum_n B_n(\epsilon)\frac{t^n}{n!}.\]
Note that \cref{e30} vanishes when $m\ge g$.

Note that  \cref{e31} is the sum of \cref{e30} and 
\beq\label{15b} \int_{\tilde{\mu}^{-1}(0)/T} \frac{2t}{2}\cup \xi|_{\tilde{\mu}^{-1}(0)/T} - \int_{\tilde{\mu}^{-1}(c)/T} \frac{2t}{2}\cup \xi|_{\tilde{\mu}^{-1}(c)/T}.\eeq 
We compute \cref{15b} as the sum of two wall crossing terms arising from the two blowups in the partial desingularization by the method. 

\subsubsection{The first blowup}\label{S5.2.1}
By the recipe and notation in \cite[\S8.1]{JKKW2}, the wall crossing term from the first blowup is 
\beq\label{e32} \int_{\PP^{g-1}\times \Delta} \res_{Y=0}\frac{Y^{2m}\frac{(-2\gamma)^n}{n!} (-2Y^2)}{(-y+2Y)(y-2Y)^g(y-4Y)^g}\eeq
where $\Delta=(S^1)^{2g}$ since $a_2|_\Delta=Y^2$ and $f_2|_\Delta=-2\gamma=-2\sum_{i=1}^gd_id_{i+g}$.
By using
\[ \frac{1}{(y-2Y)^{g+1}}=(-2Y)^{-g-1}\left(1-\frac{y}{2Y}\right)^{-g-1}\]
\[=(-2Y)^{-g-1}\sum_{k\ge 0}\binom{-g-1}{k} \left( \frac{-y}{2Y}\right)^k
=(-2Y)^{-g-1}\binom{g+k}{k}\frac{y^k}{Y^k}\]
and 
\[ \frac{1}{(y-4Y)^{g}}=(-4Y)^{-g}\left(1-\frac{y}{4Y}\right)^{-g}\]
\[=(-4Y)^{-g}\sum_{l\ge 0}\binom{-g}{l} \left( \frac{-y}{4Y}\right)^l
=(-4Y)^{g}\binom{g+l-1}{l}\frac{y^l}{Y^l}, \]
we find that \cref{e32} is 
$$\delta_{g,n}(-2)^g\left(\int_\Delta\prod_{i=1}^gd_id_{i+g}\right)\sum_{k+l=g-1}\binom{g+k}{k}\binom{g+l-1}{l}2^{-k-2l}$$
\beq\label{e33} =\delta_{g,n}2^{-3g+1}\sum_{k+l=g-1}2^{-l}\binom{g+k}{k}\binom{g+l-1}{l}\eeq 
where $\delta_{g,n}$ is the Kronecker symbol  and $\int_{(S^1)^{2g}}\prod_{i=1}^gd_id_{i+g}=1.$

Note that the wall crossing term vanishes when $n\ne g$.
\begin{example}
When $g=m=n=3$, the wall crossing term from the first blowup is 
\beq\label{e34}
2^{-8}\sum_{k=0}^22^{k-2}\binom{k+3}{3}\binom{4-k}{2}=\frac{35}{2^9}\ne 0.\eeq 
\end{example}

\subsubsection{The second blowup}
By the recipe and notation in \cite[\S8.2]{JKKW2}, we find that the wall crossing term from the second blowup is 
\beq\label{e35}
\int_{\PP W_+} \res_{Y=0} \frac{Y^{2m}\frac{(-\gamma_{12})^n}{n!} (-2Y^2)}{(-y+2Y)(y-4Y)^{g-1}\left(1+\frac{h}{y-4Y}\right)^g\exp(\frac{\hat{\gamma}}{y-4Y}) (-4Y^2)}\eeq

By expanding as in \cref{S5.2.1}, we find that \cref{e35} is 
\beq\label{e37}
2^{-2g}\sum_{r+l=2m-g+1,l\ge g-2}C_{r,0,l}\frac{(-1)^{l+n-1}}{(l-g+2)!n!}\sum_{k=0}^{\lfloor \frac{2g-n}{2}\rfloor} \frac{(-1)^k(2g-2k)!(2g-n)!g!}{(2g-2k-n)!k!(g-k)!}\eeq
\[+2^{-n}\sum_{r+s+l=2m-g+1,s\le g} C_{r,s,l}\frac{(-1)^{l+n}}{(s+l-2g+2)!n!}\frac{(2g-s-1)!g}{(g-s)!}
\]
where $C_{r,s,l}$ are defined by 
\[ \sum_{r,s,l\ge 0}C_{r,s,l}z^rx^st^l=\left( (1-2t)\sum_{k+l\le g-1}\frac{(-1)^{g-1-k-l}}{l!}\binom{g}{k} x^kz^l \right)^{-1}.\]

\begin{example}
When $g=m=n=3$, the wall crossing term from the second blowup is 
\beq\label{e38} 
-\frac{249}{2^{13}}\int_{\widetilde{\Gamma}}\gamma_{12}^3h^3 -\frac{19}{2^{14}}\int_{\widetilde{\Gamma}}\gamma_{12}^3\hat{\gamma}^3=-\frac{9}{8}.
\eeq 
\end{example}

\subsubsection{Intersection pairings on the partial desingularization}
Adding \cref{e30}, \cref{e33} and \cref{e37}, we obtain the following.
\begin{proposition}
$$\int_{\wcM^{\mathrm{ss}}_{2,0}} a_2^m\frac{f_2^n}{n!}|_{\wcM^{\mathrm{ss}}_{2,0}}= \frac{(-1)^{g-m}}{2^{2m-g+1}}\frac{B_{2g-2-2m}(1)}{(2g-2-2m)!} $$
$$+\delta_{g,n}2^{-3g+1}\sum_{k+l=g-1}2^{-l}\binom{g+k}{k}\binom{g+l-1}{l} $$
\[+2^{-2g}\sum_{r+l=2m-g+1,l\ge g-2}C_{r,0,l}\frac{(-1)^{l+n-1}}{(l-g+2)!n!}\sum_{k=0}^{\lfloor \frac{2g-n}{2}\rfloor} \frac{(-1)^k(2g-2k)!(2g-n)!g!}{(2g-2k-n)!k!(g-k)!}\]
\[+2^{-n}\sum_{r+s+l=2m-g+1,s\le g} C_{r,s,l}\frac{(-1)^{l+n}}{(s+l-2g+2)!n!}\frac{(2g-s-1)!g}{(g-s)!}.
\]
\end{proposition}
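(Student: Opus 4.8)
The plan is to follow the strategy of \cite[\S8]{jeffrey-kiem-kirwan-woolf-2006-intersection}, correcting the sign and typographical errors noted above, and to present the computation as an assembly of three already-identified pieces. Write $\xi = a_2^m f_2^n/n!$ as in \cref{e36}. First I would use the torus reduction recalled around \cref{e28}: since $\wcM^{\mathrm{ss}}_{2,0}\cong\tilde{\mu}^{-1}(0)/K$, one has
\[
    \int_{\wcM^{\mathrm{ss}}_{2,0}} \xi|_{\wcM^{\mathrm{ss}}_{2,0}}
    = \int_{\tilde{\mu}^{-1}(0)/T} \frac{2t}{2}\cup \xi|_{\tilde{\mu}^{-1}(0)/T},
\]
and, exactly as in \cref{sec-par} specialised to $r=2$ (the analogue of \cref{15}), this splits as the sum of the value at $c=0^+$ --- which is the Jeffrey--Kirwan pairing \cref{e28}, equal to \cref{e30} by \cite[Proposition~8.4]{jeffrey-kirwan-1998-curves} --- and the difference term \cref{15b}. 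So the entire problem reduces to evaluating \cref{15b}.

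Next I would make the partial desingularization explicit in this rank: $\wcM^{\mathrm{ss}}_{2,0}$ is obtained from $\cM^{\mathrm{ss}}_{2,0}$ by exactly two blowups, the first along the locus of $\mathrm{PGL}_2$-fixed points (bundles $E\cong L\oplus L$ with $\deg L = 0$, appearing as $\Delta=(S^1)^{2g}$ in the torus model) and the second along the proper transform of the locus of non-isomorphic sums $L_1\oplus L_2$ with $\deg L_i=0$. Each blowup creates one new wall between $0$ and $0^+$, and by the residue formula of \cite[\S7]{jeffrey-kiem-kirwan-woolf-2006-intersection} its contribution is a residue integral over the new $T$-fixed locus divided by the equivariant Euler class of its normal bundle in the blown-up space. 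This identifies \cref{15b} as the sum of the first-blowup term \cref{e32} (an integral over $\PP^{g-1}\times\Delta$) and the second-blowup term \cref{e35} (an integral over $\PP W_+$), where $W_+$ is the positive-weight part of the normal space to the reductive locus.

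The remaining work is the evaluation of these two residue integrals. For \cref{e32} I would expand $(y-2Y)^{-g-1}$ and $(y-4Y)^{-g}$ as binomial series in $y/Y$, extract the unique surviving monomial using $\int_{\PP^{g-1}}y^{g-1}=1$ and $\int_{(S^1)^{2g}}\prod_{i=1}^g d_id_{i+g}=1$, and read off \cref{e33}; the factor $\delta_{g,n}$ appears because $f_2|_\Delta = -2\gamma$ is top-degree on $\Delta$, so only $n=g$ contributes. For \cref{e35} I would expand $(y-4Y)^{-(g-1)}$, the factor $\bigl(1+h/(y-4Y)\bigr)^{-g}$, and $\exp\bigl(\hat{\gamma}/(y-4Y)\bigr)$ in the same fashion, and then repackage the accumulated coefficients via the generating function defining the $C_{r,s,l}$, splitting the answer into the two sums of \cref{e37} according to whether the top class on $\widetilde{\Gamma}$ is supplied by $\hat{\gamma}^n$ or by $h^n$. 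Adding \cref{e30}, \cref{e33} and \cref{e37} then yields the stated formula.

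The step I expect to be the main obstacle is the second blowup. One has to describe precisely the proper transform of the reductive strictly-semistable locus after the first blowup, identify the bundle $W_+$ and the tautological classes $h$, $\hat{\gamma}$, $\gamma_{12}$ on $\PP W_+$, and get the bookkeeping of the triple sum exactly right --- in particular the two different index ranges ($l\ge g-2$ in one sum, $s\le g$ in the other) and the signs, which is precisely where \cite{jeffrey-kiem-kirwan-woolf-2006-intersection} has the typos we are correcting. By comparison the first blowup is genuinely routine, and the reduction in the first paragraph is immediate from \cref{sec-par}.
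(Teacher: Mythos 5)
Your proposal follows exactly the paper's own route: reduce to the abelian quotient via $\cD_2/2! = 2t/2$, split the integral as the Jeffrey--Kirwan value \cref{e30} plus the wall-crossing difference \cref{15b}, evaluate the two blowup contributions \cref{e32} and \cref{e35} by the residue recipe of \cite{jeffrey-kiem-kirwan-woolf-2006-intersection} with the binomial expansions yielding \cref{e33} and \cref{e37}, and add. This is correct and is essentially the same argument the paper gives.
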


In particular, when $g=m=n=3$, we find that 
$$\int_{\wcM^{\mathrm{ss}}_{2,0}}a_2^3f_2^3|_{\wcM^{\mathrm{ss}}_{2,0}}=-\frac{1623}{256}\ne 0$$
and the JKKW pairings \cref{e31} are different from the JK pairings and the Joyce pairings.

\phantomsection
\addcontentsline{toc}{section}{References}
\sloppy
\setstretch{1.1}
\renewcommand*{\bibfont}{\normalfont\small}
\printbibliography

\authorinforule

\authorinfo{Chenjing Bu}
    {bucj@mailbox.org}
    {Mathematical Institute, University of Oxford, Oxford OX2 6GG, United Kingdom}

\authorinfo{Young-Hoon Kiem}
    {kiem@kias.re.kr}
    {School of Mathematics, Korea Institute for Advanced Study, 85 Hoegiro, Dongdaemun-gu, Seoul~02455, Korea}

\end{document}